\documentclass[11pt]{amsart}
\usepackage[utf8]{inputenc}
\usepackage{amsmath,amssymb,amsthm,dsfont,mathrsfs}
\usepackage{xcolor}
\usepackage[colorlinks=true,linkcolor=blue,citecolor=blue]{hyperref}
%\usepackage{booktabs}
%\usepackage[shortlabels]{enumitem}
%%%%%%%%%%%%%%%%%%%%%%%%%%%%%
%\usepackage{cancel}
%\usepackage{ulem}
%%%%%%%%%%%%%%%%%%%%%%%%%%%%%

%\usepackage{setspace}
%\usepackage{orcidlink}
%\usepackage{relsize}
%\usepackage{stmaryrd}
%\usepackage{wrapfig}
%\usepackage{mathtools}
%\usepackage{graphicx}
%\usepackage{url}
%\usepackage{comment}
% \usepackage{float}
%\usepackage{scrextend}
%\usepackage{makecell}%To keep spacing of text in tables
%\usepackage{cancel}
%\usepackage{geometry}
%\geometry{a4paper, left=2.54cm, right=2.54cm, bottom=2.54cm, top=2.54cm}
%\renewcommand{\baselinestretch}{1.1}% space between lines
\setlength{\parskip}{0.45\baselineskip}
%\raggedbottom
\allowdisplaybreaks

\numberwithin{equation}{section}
\newtheorem{theorem}{Theorem}[section]
\newtheorem{corollary}[theorem]{Corollary}
\newtheorem{proposition}[theorem]{Proposition}

\newtheorem{lemma}[theorem]{Lemma}

\theoremstyle{definition}
\newtheorem{definition}[theorem]{Definition}
\newtheorem{remark}[theorem]{Remark}
\newtheorem{example}[theorem]{Example}

\DeclareMathOperator{\R}{\mathbb{R}}
\DeclareMathOperator{\Z}{\mathbb{Z}}
\DeclareMathOperator{\N}{\mathbb{N}}

\DeclareMathOperator{\T}{\mathbb{T}}

\DeclareMathOperator{\pt}{\mathnormal{\prescript{t}{}{p(D)}}}

\usepackage{mathtools}

%\title[Finite regularity and Solvability]{Finite regularity and Solvability with \\ Loss of Derivatives on the Torus}
\title[Global Hypoellipticity and Solvability]{Global Hypoellipticity and Solvability \\ with Loss of Derivatives on the Torus}
\author{André Kowacs} % \orcidlink{https://orcid.org/0000-0001-8784-3147}}
 \address{Universidade Federal do Paran\'{a}, 
	Departamento de Matem\'{a}tica,
	 CEP 81531-990, Curitiba, Brazil}
\email{andrekowacs@gmail.com}

   \endgraf
 \author{Alexandre Kirilov} %\orcidlink{https://orcid.org/0000-0001-5343-488X}}
 \address{Universidade Federal do Paran\'{a}, 
	Departamento de Matem\'{a}tica,
	C.P.19096, CEP 81531-990, Curitiba, Brazil}
\email{akirilov@ufpr.br}

% %---------- Thanks
\thanks{The second author was supported in part by CNPq (grants 316850/2021-7 and 423458/2021-3). }

% %---------- Subjclass
\subjclass{Primary: 35A01, 35B65. Secondary: 35H10, 11J82}

% %---------- Keywords
\keywords{
Global Hypoellipticity,
Global Solvability,
Fourier Multipliers, 
Loss of Derivatives,
Irrationality Measure}
\begin{document}

	\begin{abstract}
		This paper provides a complete characterization of global hypoellipticity and solvability with loss of derivatives for Fourier multiplier operators on the $n$-dimensional torus. We establish necessary and sufficient conditions for these properties and examine their connections with classical notions of global hypoellipticity and solvability, particularly in relation to the closedness of the operator's range.
		
		As an application, we explore the interplay between these properties and number theory in the context of differential operators on the two-torus. Specifically, we prove that the loss of derivatives in the solvability of the vector field $\partial_{x_1} - \alpha \partial_{x_2}$ is precisely determined by the well-known irrationality measure $\mu(\alpha)$ of its coefficient $\alpha$.
		Furthermore, we analyze the wave operator $\partial_{x_1}^2 - \eta^2 \Delta_{\mathbb{T}^n}$ and show how the loss of derivatives depends explicitly on the parameter $\eta > 0$.
	\end{abstract}

\maketitle
%===========================================
 %===========================================
 \section{Introduction}
 %===========================================
 %=========================================== 
 	
 	The study of global properties of differential operators on compact manifolds has been an active area of research in recent decades. A fundamental question in this field is to  determine the conditions under which an operator fails to regularize distributions —a property known as global hypoellipticity— or has closed range, which is often associated with global solvability. These questions lie at the heart of the theory of linear partial differential equations and reveal deep connections with number theory and geometry, as extensively discussed in
 	\cite{
 		AP2006_ferrara,
 		AFR2022_jam, 
 		Berg1999_tams,
 		CC2000_cpde,
 		Avil2023_mn,
 		Petr2011_tams}  
 	and the references therein.
 	
 	In addition to classical notions of smoothness, variations of global hypoellipticity and solvability have been investigated by considering alternative forms of regularity, such as analyticity and ultradifferentiability. These extensions have been explored in works such as  
 	\cite{
 		AKM2019_jmaa,
 		BDG2018_jde,
 		AM2022_adm,
 		Les2021_afm,
 		Petr2009_mn}.
 	 
 	 In this work, we adopt an alternative approach to hypoellipticity and solvability by focusing on spaces of lower regularity and explicitly accounting for the loss of derivatives (regularity). While this type of global hypoellipticity has been explored in other works, such as \cite{Ferra2023,Ferra2020,FerraPetronilho2021}, our approach stands out by enabling a precise quantification of the loss of regularity.
 	 
 	 Similarly, in the context of solvability, previous studies have addressed lower regularity settings, such as in \cite{CORDARO2016458}, where the authors investigate semi-global solvability with loss of regularity over Sobolev spaces on general smooth manifolds. However, our work differs substantially due to its fully global perspective, providing a comprehensive analysis of these properties on compact manifolds.

 	 We introduce the notions of global hypoellipticity and solvability with a finite loss of derivatives and establish necessary and sufficient conditions for a Fourier multiplier operator on the torus to satisfy these properties. In particular, we show that a Fourier multiplier on the \( n \)-torus is globally hypoelliptic if and only if it is globally hypoelliptic with loss of \( r \) derivatives for some \( r \geq 0 \). 
 	 
 	 One of the main results of this paper is the following characterization of global hypoellipticity with a loss of $ r $ derivatives:
 	 
 	 \begin{theorem}
 	 	Let $ p(D) $ be a Fourier multiplier of order $ m $ on $ \mathbb{T}^n $, and let $ r \geq 0 $. The following conditions are equivalent:
 	 	\begin{enumerate}
 	 		\item The symbol $ p(\xi) $ vanishes for at most finitely many $ \xi \in \mathbb{Z}^n $, and there exists a constant $ C > 0 $ such that  
 	 		\begin{equation*}
 	 		    |p(\xi)| \geq C |\xi|^{m - r},
 	 		\end{equation*} 		
 	 		for every $ \xi \in \mathbb{Z}^n \setminus \{0\} $ such that $ p(\xi) \neq 0 $.
 	 		
 	 		\item For every $ k \in \mathbb{R} $, the following implication holds:
 	 		\[
 	 		u \in \mathscr{D}'(\mathbb{T}^n) \text{ and } p(D)u \in H^k(\mathbb{T}^n) \implies u \in H^{k + m - r}(\mathbb{T}^n).
 	 		\]
 	 	\end{enumerate}
 	 	
 	 	Moreover, $ p(D) $ is globally hypoelliptic in the classical sense if and only if either of the equivalent conditions above holds for some $ r \geq 0 $.
 	 \end{theorem}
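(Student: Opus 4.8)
The plan is to establish $(1)\Leftrightarrow(2)$ directly --- one implication via a Fourier-side estimate, the other by contraposition and an explicit counterexample --- and then to obtain the final assertion by letting $k$ run over all of $\mathbb{R}$ and, for its converse, by reproving inline the classical criterion (finitely many zeros plus a polynomial lower bound on the symbol) and matching exponents.

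\emph{Proof of $(1)\Rightarrow(2)$.} Put $f=p(D)u$, so $\widehat{f}(\xi)=p(\xi)\widehat{u}(\xi)$, and let $Z=\{\xi\in\mathbb{Z}^n:p(\xi)=0\}$, which is finite by hypothesis. Since $|\xi|$ and $\langle\xi\rangle$ are comparable on $\mathbb{Z}^n\setminus\{0\}$, the bound $|p(\xi)|\ge C|\xi|^{m-r}$ yields, for $\xi\notin Z\cup\{0\}$, $|\widehat{u}(\xi)|^2\langle\xi\rangle^{2(k+m-r)}\lesssim|\widehat{f}(\xi)|^2\langle\xi\rangle^{2k}$; the sum of the right-hand side over such $\xi$ is finite because $f\in H^k$, and the finitely many remaining terms (over $Z\cup\{0\}$) are each finite since $\widehat{u}(\xi)\in\mathbb{C}$. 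Hence $\|u\|_{H^{k+m-r}}<\infty$. The only thing to watch here is the isolation of the exceptional set $Z\cup\{0\}$.

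\emph{Proof of $(2)\Rightarrow(1)$, by contraposition.} Assume $(1)$ fails. I first extract distinct $\xi_j\in\mathbb{Z}^n\setminus\{0\}$ with $|\xi_j|\to\infty$ and $|p(\xi_j)|\le j^{-1}|\xi_j|^{m-r}$: if $Z$ is infinite, take $\xi_j\in Z$; if $Z$ is finite, the failure of the lower bound forces, for each $j$, the set $\{\xi\neq0:0<|p(\xi)|<j^{-1}|\xi|^{m-r}\}$ to be infinite --- were it finite, the positive minimum of $|p(\xi)|/|\xi|^{m-r}$ over that finite set, together with the bound $1/j$ outside it, would restore $(1)$ --- so one picks $\xi_j$ from it with $|\xi_j|$ strictly increasing. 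Now define $\widehat{u}(\xi)=\langle\xi\rangle^{-(m-r)}$ for $\xi\in\{\xi_j\}$ and $\widehat{u}(\xi)=0$ otherwise; then $u\in\mathscr{D}'(\mathbb{T}^n)$. One has $\|p(D)u\|_{H^0}^2=\sum_j|p(\xi_j)|^2\langle\xi_j\rangle^{-2(m-r)}\lesssim\sum_j j^{-2}<\infty$, so $p(D)u\in H^0$, whereas $\|u\|_{H^{m-r}}^2=\sum_j 1=\infty$, contradicting $(2)$ for $k=0$. The single genuine obstacle in the whole argument appears here: $p(\xi_j)$ may be arbitrarily small with no polynomial lower bound, so one must \emph{not} set $\widehat{u}(\xi_j)=c_j/p(\xi_j)$ (that need not define a distribution); placing the free parameter into a power of $\langle\xi_j\rangle$ sidesteps this.

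\emph{Proof of the final statement.} If $(2)$ holds for some $r\ge0$ and $p(D)u\in C^\infty(\mathbb{T}^n)=\bigcap_{k\in\mathbb{R}}H^k(\mathbb{T}^n)$, then $u\in H^{k+m-r}$ for every $k$, hence $u\in\bigcap_{s}H^s=C^\infty(\mathbb{T}^n)$; so $p(D)$ is globally hypoelliptic. Conversely, assume $p(D)$ is globally hypoelliptic. Taking $\widehat{u}$ equal to $1$ on an infinite subset $\{\xi_j\}$ of $Z$ (if $Z$ were infinite) gives $u\in\mathscr{D}'\setminus C^\infty$ with $p(D)u=0\in C^\infty$, a contradiction; hence $Z$ is finite. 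If moreover no inequality $|p(\xi)|\ge C|\xi|^{-M}$ held on $\mathbb{Z}^n\setminus(Z\cup\{0\})$, then for every $N$ the set $\{\xi\neq 0:0<|p(\xi)|<|\xi|^{-N}\}$ would be infinite (same positive-minimum reasoning); choosing from it distinct $\xi_N$ with $|\xi_N|\ge 2^N$ and setting $\widehat{u}(\xi_N)=\langle\xi_N\rangle^{-1}$ produces $u\in\mathscr{D}'\setminus C^\infty$ with $\|p(D)u\|_{H^s}^2<\infty$ for every $s$ (the general term decays super-exponentially in $N$ once $|\xi_N|\ge2^N$), i.e.\ $p(D)u\in C^\infty$ --- again a contradiction. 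Therefore $|p(\xi)|\ge C|\xi|^{-M}$ for some $M$; enlarging $M$ so that $r:=m+M\ge0$, this is precisely condition $(1)$. This also recovers the classical description of global hypoellipticity as ``finitely many zeros plus a polynomial lower bound on the symbol''.
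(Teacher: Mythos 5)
Your proof is correct. The equivalence $(1)\Leftrightarrow(2)$ follows essentially the paper's own argument: the sufficiency is the same Fourier-side estimate with the finite exceptional set split off, and the necessity uses the same kind of counterexample sequence $|p(\xi_j)|\le j^{-1}|\xi_j|^{m-r}$; your only deviation is cosmetic, namely merging the two failure cases (infinitely many zeros, or failure of the lower bound) into a single construction with coefficients $\langle\xi_j\rangle^{-(m-r)}$ tested at $k=0$, where the paper uses coefficients equal to $1$ and tests at $k=-m+r$. The genuine difference is in the ``moreover'' part: the paper deduces it by combining its Theorem on GH-$r$ with the Greenfield--Wallach characterization of classical global hypoellipticity (finitely many zeros plus a bound $|p(\xi)|\ge K|\xi|^{-L}$), which it states and cites without proof, and then matches exponents via $r=m+L$ exactly as you do. You instead make this part self-contained: the implication GH-$r\Rightarrow$ GH is obtained directly by intersecting the Sobolev spaces, and the converse is obtained by reproving the necessity half of Greenfield--Wallach inline --- the infinitely-many-zeros counterexample, and the lacunary sequence $|\xi_N|\ge 2^N$ with coefficients $\langle\xi_N\rangle^{-1}$ whose image is smooth because $|p(\xi_N)|<|\xi_N|^{-N}$ forces super-polynomial decay, while $u$ itself is not smooth. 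Both the positive-minimum argument you use to upgrade ``the bound fails'' to ``infinitely many bad frequencies'' and the exponent bookkeeping $r:=m+M\ge0$ are sound. What the paper's route buys is brevity, delegating the classical characterization to the literature; what yours buys is independence from that citation at the cost of two extra explicit constructions.
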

 	 
 	 Similarly, we establish the following characterization of global solvability with a loss of $ r $ derivatives:
 	 
 	 \begin{theorem}
 	 	Let $ p(D) $ be a Fourier multiplier of order $ m $ on $ \mathbb{T}^n $, and let $ r \geq 0 $. The following conditions are equivalent:
 	 	\begin{enumerate}
 	 		\item There exists a constant $ K > 0 $ such that  
 	 		\[
 	 		|p(\xi)| \geq K |\xi|^{m - r},
 	 		\]
 	 		for every $ \xi \in \mathbb{Z}^{n} \setminus \{0\} $ such that $ p(\xi) \neq 0 $.
 	 		
 	 		\item For every $ k \in \mathbb{R} $ and for every $ f \in H^k(\mathbb{T}^n) \cap (\ker {}^{t}p(D) \cap C^\infty(\mathbb{T}^n))^0 $, there exists $ u \in H^{k + m - r}(\mathbb{T}^n) $ such that  
 	 		\[
 	 		p(D)u = f.
 	 		\]
 	 		
 	 		\item For every $ k \in \mathbb{R} $, the operator  
 	 		\[
 	 		p(D): H^{k + m - r}(\mathbb{T}^n) \to H^k(\mathbb{T}^n)
 	 		\]
 	 		has closed range.
 	 	\end{enumerate}
 	 \end{theorem}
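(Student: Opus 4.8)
The plan is to transfer everything to Fourier coefficients and prove the cycle $(1)\Rightarrow(2)\Rightarrow(3)\Rightarrow(1)$. Write $Z=\{\xi\in\mathbb Z^n:p(\xi)=0\}$ and recall that on the exponentials $e_\xi(x)=e^{2\pi i\xi\cdot x}$ one has $p(D)e_\xi=p(\xi)e_\xi$ and ${}^{t}p(D)e_\xi=p(-\xi)e_\xi$. A preliminary observation, valid whether or not $Z$ is finite, is that $v\in\ker{}^{t}p(D)\cap C^\infty(\mathbb T^n)$ exactly when $\widehat v$ is supported in $-Z$, so that by the bilinear pairing $\langle f,v\rangle=\sum_{\xi}\widehat f(\xi)\widehat v(-\xi)$ one identifies
\[
H^k(\mathbb T^n)\cap\bigl(\ker{}^{t}p(D)\cap C^\infty(\mathbb T^n)\bigr)^0=\bigl\{\,f\in H^k(\mathbb T^n):\widehat f(\xi)=0\text{ for all }\xi\in Z\,\bigr\}=:V_k .
\]
Since $\widehat{p(D)u}(\xi)=p(\xi)\widehat u(\xi)$, the range of $p(D)\colon H^{k+m-r}\to H^k$ is automatically contained in $V_k$; the content of the theorem is that condition (1) is exactly what forces this inclusion to be an equality (and the range to be closed).

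For $(1)\Rightarrow(2)$ I would, given $f\in V_k$, set $\widehat u(\xi)=\widehat f(\xi)/p(\xi)$ when $p(\xi)\neq0$ and $\widehat u(\xi)=0$ otherwise; this is consistent because $\widehat f$ vanishes on $Z$. The estimate $|p(\xi)|\ge K|\xi|^{m-r}$ together with the elementary comparison $|\xi|\le\langle\xi\rangle\le\sqrt2\,|\xi|$ on $\mathbb Z^n\setminus\{0\}$ gives $\langle\xi\rangle^{2(k+m-r)}|\widehat u(\xi)|^2\le C\langle\xi\rangle^{2k}|\widehat f(\xi)|^2$ for all $\xi\neq0$ with a constant $C=C(n,m,r,K)$ (the single frequency $\xi=0$ contributes a harmless finite term), so $u\in H^{k+m-r}$ and $p(D)u=f$. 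For $(2)\Rightarrow(3)$: by (2) together with the preliminary observation the range of $p(D)\colon H^{k+m-r}\to H^k$ equals $V_k$ exactly, and $V_k$ is closed in $H^k$ since it is the intersection over $\xi\in Z$ of the kernels of the continuous functionals $f\mapsto\widehat f(\xi)$; hence the range is closed.

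The main work is $(3)\Rightarrow(1)$, which I would prove by contraposition. If (1) fails there is a sequence $\xi_j\in\mathbb Z^n\setminus\{0\}$ with $p(\xi_j)\neq0$ and $\varepsilon_j:=|p(\xi_j)|/|\xi_j|^{m-r}\to0$; no value of $\xi$ can occur infinitely often along such a sequence (that would force $\varepsilon_j$ to take a fixed positive value infinitely often), so after passing to a subsequence we may assume the $\xi_j$ are pairwise distinct and, discarding further terms, that $\varepsilon_j\le2^{-j}$. Fix any $k\in\mathbb R$ and put $f=\sum_j\sqrt{\varepsilon_j}\,\langle\xi_j\rangle^{-k}e_{\xi_j}$. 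Then $\|f\|_{H^k}^2\asymp\sum_j\varepsilon_j<\infty$, so $f\in H^k$; the truncations $f^{(N)}=\sum_{j\le N}\sqrt{\varepsilon_j}\,\langle\xi_j\rangle^{-k}e_{\xi_j}$ are trigonometric polynomials in $V_k$ and actually lie in the range, since the trigonometric polynomial $u^{(N)}$ with $\widehat{u^{(N)}}(\xi_j)=\widehat{f^{(N)}}(\xi_j)/p(\xi_j)$ solves $p(D)u^{(N)}=f^{(N)}$, and $f^{(N)}\to f$ in $H^k$. However $f\notin\operatorname{Range}p(D)$: any $u\in H^{k+m-r}$ with $p(D)u=f$ would satisfy $\widehat u(\xi_j)=\sqrt{\varepsilon_j}\,\langle\xi_j\rangle^{-k}/p(\xi_j)$, whence
\[
\|u\|_{H^{k+m-r}}^2\ \ge\ \sum_j\langle\xi_j\rangle^{2(m-r)}\frac{\varepsilon_j}{|p(\xi_j)|^2}\ \asymp\ \sum_j\frac1{\varepsilon_j}\ =\ \infty ,
\]
using $|p(\xi_j)|^2=\varepsilon_j^2|\xi_j|^{2(m-r)}$ and $\langle\xi_j\rangle\asymp|\xi_j|$. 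Thus the range of $p(D)\colon H^{k+m-r}\to H^k$ is not closed for this $k$, contradicting (3). The points requiring care are the extraction of the subsequence, the verification that the truncations genuinely belong to the range rather than only to its closure, and the uniform bookkeeping between $|\xi|$ and $\langle\xi\rangle$ in the Sobolev exponents — clean here precisely because every $\xi_j$ is nonzero, so that the cases $m-r\ge0$ and $m-r<0$ are handled at once.
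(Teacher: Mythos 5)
Your proposal is correct, and its skeleton coincides with the paper's: the identification of $H^k(\mathbb{T}^n)\cap(\ker {}^{t}p(D)\cap C^\infty)^0$ with $\{f\in H^k:\widehat f(\xi)=0 \text{ on } p^{-1}(0)\}$ is exactly the paper's Lemma \ref{equivalence_ker0}, your $(1)\Rightarrow(2)$ is the same division-by-the-symbol argument as in Theorem \ref{theoGS}, and your $(3)\Rightarrow(1)$ is essentially the construction in Theorem \ref{teoGS-closed}: truncated trigonometric polynomials in the range converging in $H^k$ to an $f$ whose only candidate preimage fails to lie in $H^{k+m-r}$ (the paper weights by $\widehat f(\xi_j)=p(\xi_j)(1+\|\xi_j\|^2)^{(-k-m+r)/2}$ with $|p(\xi_j)|\le \frac1j|\xi_j|^{m-r}$, you by $\sqrt{\varepsilon_j}\langle\xi_j\rangle^{-k}$ with $\varepsilon_j\le 2^{-j}$; both work). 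The one step where you genuinely diverge is $(2)\Rightarrow(3)$: the paper goes through $(1)$ and the coercive estimate $\|p(D)u\|_{H^k}\ge c\,\|u\|_{H^{k+m-r}}$, deducing closed range from completeness, whereas you observe that under $(2)$ the range equals the subspace $V_k$, which is closed as an intersection of kernels of the continuous functionals $f\mapsto\widehat f(\xi)$. Your route is slightly cleaner (it sidesteps the delicate point, present in the paper, of whether $p(D)$ maps all of $H^{k+m-r}$ into $H^k$ when $r>0$, since you only need that whatever lands in $H^k$ has coefficients vanishing on the zero set), while the paper's route yields the quantitative a priori inequality as a by-product. Two harmless inaccuracies: the comparison $|\xi|\le\langle\xi\rangle\le\sqrt2\,|\xi|$ is not literally true (e.g.\ $\xi=(1,1)$ gives $|\xi|=2>\sqrt3=\langle\xi\rangle$); the correct statement is equivalence of $|\xi|$, $\|\xi\|$ and $\langle\xi\rangle$ on $\mathbb{Z}^n\setminus\{0\}$ with dimension-dependent constants, which is all your estimates use, and your subsequence extraction and the exactness (not just density) of the truncations in the range are argued correctly.
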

 	 
 	 As an application, we revisit the classical work of Greenfield and Wallach \cite{GW1972_pams} on the interplay between global hypoellipticity and Liouville numbers. In this context, we establish deeper and more refined connections between number theory and the regularity of vector fields on the two-dimensional torus.  
 	 
 	 \begin{theorem}\label{Thm-GHr-introduction}  
 	 	Let \( p(D) = \partial_{x_1} - \alpha \partial_{x_2} \) be a vector field on \( \mathbb{T}^2 \), where \( \alpha \in \mathbb{C} \). The global hypoellipticity with loss of derivatives for \( p(D) \) is characterized as follows:  
 	 	\begin{enumerate}  
 	 		\item[{\it (i)}] If \( \operatorname{Im}(\alpha) \neq 0 \), then \( p(D) \) is globally hypoelliptic without any loss of derivatives.  
 	 		
 	 		\item[{\it (ii)}] If \( \alpha \) is either a rational number or an irrational Liouville number, then \( p(D) \) is not globally hypoelliptic with loss of derivatives.  
 	 		
 	 		\item[{\it (iii)}] If \( \alpha \) is an algebraic irrational number, then \( p(D) \) is globally hypoelliptic with a loss of $r$ derivatives, for any $r>2$. 
 	 		
 	 		\item[{\it (iv)}] If \( \alpha \) is a transcendental irrational number with a finite irrationality measure \( \mu(\alpha) \), then \( p(D) \) is globally hypoelliptic with a loss of $r$ derivatives, for any $r>\mu(\alpha)$. 
 	 	\end{enumerate}  
 	 \end{theorem}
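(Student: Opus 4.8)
The strategy is to reduce all four assertions to the symbol-level criterion of Theorem 1 and then invoke the relevant facts from Diophantine approximation. Expanding a distribution on $\mathbb{T}^2$ in its Fourier series $u=\sum_{\xi\in\mathbb{Z}^2}\hat u(\xi)e^{ix\cdot\xi}$, the vector field $p(D)=\partial_{x_1}-\alpha\partial_{x_2}$ has symbol $p(\xi)=i(\xi_1-\alpha\xi_2)$, so $|p(\xi)|=|\xi_1-\alpha\xi_2|$ and its order is $m=1$. Hence, by Theorem 1, $p(D)$ is globally hypoelliptic with loss of $r$ derivatives if and only if $\xi_1-\alpha\xi_2$ vanishes for at most finitely many $\xi\in\mathbb{Z}^2$ and there is $C>0$ with $|\xi_1-\alpha\xi_2|\ge C|\xi|^{1-r}$ for every $\xi\neq0$ with $\xi_1\neq\alpha\xi_2$. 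Two elementary remarks will be used repeatedly: if $\alpha\notin\mathbb{Q}$ then $\xi_1-\alpha\xi_2=0$ with $\xi\in\mathbb{Z}^2$ forces $\xi=0$; and if $\xi_2=0$, $\xi\neq0$, then $|\xi_1-\alpha\xi_2|=|\xi_1|\ge1\ge|\xi|^{1-r}$ for every $r\ge0$, so this case never obstructs the lower bound. It therefore suffices, for the quantitative estimates, to treat $\xi_2\neq0$, where $|\xi|\ge|\xi_2|\ge1$ and, on the region $|\xi_1-\alpha\xi_2|\le1$, also $|\xi|\le c_\alpha|\xi_2|$ for a constant $c_\alpha$ depending only on $\alpha$; in particular $|\xi|\asymp|\xi_2|$ there.

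For item (i), if $\operatorname{Im}(\alpha)\neq0$ then $|\xi_1-\alpha\xi_2|^2=(\xi_1-\operatorname{Re}(\alpha)\xi_2)^2+\operatorname{Im}(\alpha)^2\xi_2^2$ is a positive definite quadratic form in $(\xi_1,\xi_2)\in\mathbb{R}^2$ (its determinant equals $\operatorname{Im}(\alpha)^2>0$), so $|\xi_1-\alpha\xi_2|\ge C|\xi|$ for some $C>0$; together with the absence of nonzero zeros this gives global hypoellipticity with loss $r=0$, i.e.\ without loss. For item (ii): if $\alpha=p_0/q_0\in\mathbb{Q}$ then $\xi_1-\alpha\xi_2$ vanishes at every $\xi=k(p_0,q_0)$, $k\in\mathbb{Z}$, so condition (1) of Theorem 1 fails for every $r$ (the symbol has infinitely many zeros). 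If $\alpha$ is an irrational Liouville number it has no nonzero zeros, but by definition, for every $N$ there are integer pairs $(\xi_1,\xi_2)$ with $|\xi_2|$ arbitrarily large and $|\xi_1-\alpha\xi_2|<|\xi_2|^{1-N}$; since $|\xi|\le c_\alpha|\xi_2|$ on this region, $|\xi_1-\alpha\xi_2|\,|\xi|^{r-1}\lesssim_{\alpha,r}|\xi_2|^{r-N}\to0$ once $N>r$, so for every $r\ge0$ and every $C>0$ there is $\xi$ with $0<|\xi_1-\alpha\xi_2|<C|\xi|^{1-r}$, and the lower bound in Theorem 1 fails. Thus $p(D)$ is not globally hypoelliptic with loss of derivatives.

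Items (iii) and (iv) are the same computation with different Diophantine inputs. For $\xi_2\neq0$ write $|\xi_1-\alpha\xi_2|=|\xi_2|\,|\alpha-\xi_1/\xi_2|$. If $\alpha$ is algebraic irrational, the Thue--Siegel--Roth theorem yields, for each $\varepsilon>0$, a constant $C_\varepsilon>0$ with $|\alpha-p/q|\ge C_\varepsilon\,q^{-2-\varepsilon}$ for all rationals $p/q$; if $\alpha$ is transcendental with finite irrationality measure, the definition of $\mu(\alpha)$ yields, for each $\mu>\mu(\alpha)$, a constant $C_\mu>0$ with $|\alpha-p/q|\ge C_\mu\,q^{-\mu}$ for all rationals $p/q$ (the finitely many rationals violating the defining inequality have bounded denominators and are absorbed into the constant, since $\alpha\notin\mathbb{Q}$). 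Taking $r=2+\varepsilon$ (respectively $r=\mu$), which exceeds $1$ since $\mu(\alpha)\ge2$ by Dirichlet's theorem, we obtain $|\xi_1-\alpha\xi_2|\ge C_r|\xi_2|^{1-r}\ge C_r|\xi|^{1-r}$ for all $\xi$ with $\xi_2\neq0$ (using $1-r<0$ and $|\xi_2|\le|\xi|$), while $\xi_2=0$ is trivial; since $\alpha$ is irrational there are no nonzero zeros, so Theorem 1 gives global hypoellipticity with loss of $r$ derivatives for every $r>2$, respectively every $r>\mu(\alpha)$.

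The proof presents no analytic difficulty: with Theorem 1 in hand it is essentially bookkeeping together with classical number theory, and the main obstacle is simply getting these two ingredients to match. Concretely, the delicate points are extracting the exception-free lower bounds $|\alpha-p/q|\ge C\,q^{-\mu}$ (and the corresponding Roth estimate) from the standard ``all but finitely many'' formulations, and tracking exponents so that the power of $|\xi|$ is correctly measured against the order $m=1$ demanded by Theorem 1 — the comparison $|\xi|\asymp|\xi_2|$ on $\{|\xi_1-\alpha\xi_2|\le1\}$ being exactly what makes this matching work. One may note in passing that (iii) is really the instance $\mu(\alpha)=2$ of the pattern in (iv), since Roth's theorem gives $\mu(\alpha)\le2$ and Dirichlet's theorem gives $\mu(\alpha)\ge2$ for every algebraic irrational; it is stated separately only because Roth's theorem is a much deeper input.
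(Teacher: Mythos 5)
Your proposal is correct and follows essentially the same route as the paper: reduce each case to the symbol criterion of Theorem \ref{theoGH} for $p(\xi)=i(\xi_1-\alpha\xi_2)$ and feed in ellipticity for $\operatorname{Im}(\alpha)\neq 0$, the infinitely many zeros for rational $\alpha$, Roth's theorem for algebraic irrationals, and the $\mu(\alpha)+\varepsilon$ lower bound for transcendental $\alpha$ of finite irrationality measure. The only cosmetic difference is in the Liouville case, where the paper invokes the Greenfield--Wallach characterization together with Corollary \ref{coro_GH_equiv}, while you verify directly that the lower bound in Theorem \ref{theoGH} fails for every $r$ — an equivalent, slightly more self-contained argument.
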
  
 	 
 	 Here, \( \mu(\alpha) \) denotes the Liouville–Roth exponent of \( \alpha\) (also referred to as the irrationality measure of of \(alpha\)), defined as the infimum of all real numbers \( \mu > 0 \) for which the inequality  
 	 \[
 	 0 < \left| \alpha - \frac{p}{q} \right| < \frac{1}{q^\mu}
 	 \]
 	 has at most finitely many rational solutions \( p/q \). This definition extends the concept of Liouville numbers by quantifying how well \( \alpha \) can be approximated by rationals.

 	 The paper is organized as follows. In Section 2, we introduce the notion of global hypoellipticity with a loss of derivatives and establish necessary and sufficient conditions for this property. In Section 3, we define and characterize global solvability with a loss of derivatives, proving its equivalence with the closed range condition. Section 4 focuses on applications to vector fields on \( \mathbb{T}^2 \), where we compute explicit indices and explore their connections with Diophantine approximation. Finally, in Section 5, we extend our analysis to the wave operator on the torus, utilizing several number-theoretic tools to study its behavior.

%=================================
%=================================
\section{Hypoellipticity with Loss of Derivatives}\label{Section_GH-r}
%=================================
%=================================

	Let \( \mathscr{D}'(\mathbb{T}^n) \) denote the space of distributions on the \( n \)-dimensional torus \( \mathbb{T}^n \), and consider the classical Sobolev scale of spaces \( H^k(\mathbb{T}^n) \) for \( k \in \mathbb{R} \). These are Banach spaces consisting of all distributions \( u \in \mathscr{D}'(\mathbb{T}^n) \) for which the norm  
	\[
	\|u\|_{H^k(\mathbb{T}^n)}^2 = \sum_{\xi \in \mathbb{Z}^n} (1 + \| \xi \|^2)^{k} |\widehat{u}(\xi)|^2 < +\infty.
	\]
	
	We denote  \( |\xi| = \sum_j |\xi_j| \)  and \( \|\xi\| = \left( \sum_j |\xi_j|^2 \right)^{1/2}, \)  for \( \xi \in \mathbb{Z}^n\).
		
	We recall that a function \( p: \mathbb{T}^n \times \mathbb{Z}^n \to \mathbb{C} \) is called a symbol of order \( m \in \mathbb{R} \), written \( p \in S^m(\mathbb{T}^n \times \mathbb{Z}^n) \), if it satisfies the estimate  
	\[
	|\partial_x^\alpha p(x, \xi)| \leq C_\alpha (1 + \|\xi\|^2)^{\frac{m}{2}}
	\]
	for some constant \( C_\alpha > 0 \), for all multi-indices \( \alpha \in \mathbb{N}_0^n \) and for all \( (x,\xi) \in \T^n\times\mathbb{Z}^n \).  
	
	Given such a symbol, we define the pseudo-differential operator \( p(D): \mathscr{D}'(\mathbb{T}^n) \to \mathscr{D}'(\mathbb{T}^n) \) by  
	\[
	p(D)u(x) = \sum_{\xi \in \mathbb{Z}^n} e^{i x \cdot \xi} p(x, \xi) \widehat{u}(\xi), \quad u \in \mathscr{D}'(\mathbb{T}^n).
	\]
	In this case, we say that \( p(D) \) is an operator of order \( m \), consistent with the order of its symbol.
	
	When the symbol \( p \) is independent of \( x \), we have  
	\[
	\widehat{p(D)u}(\xi) = p(\xi) \widehat{u}(\xi), \quad \xi \in \mathbb{Z}^n,
	\]
	and in this case, \( p(D) \) is called a Fourier multiplier.  
	
	For further details on pseudo-differential operators on the torus and classical results used in this manuscript, we refer the reader to the works of Ruzhansky and Turunen \cite{RT2007_fourier_torus, RT2010_book, RT2010_jfaa}, as well as the article by Ferra and Petronilho \cite{FerraPetronilho2021}.  
	
	Note that the order of a symbol or operator is not uniquely determined: if a symbol \( p \) is of order \( m \in \mathbb{R} \), then it is also of order \( m' \) for any \( m' \geq m \). To precisely characterize the regularity properties of the operators studied in this paper, we introduce the following refined notion of order.  
	
	\begin{definition}
		Let \( p: \mathbb{T}^n \times \mathbb{Z}^n \to \mathbb{C} \) be a symbol. We say that \( p \) has \emph{intrinsic order} \( m \) if  
		\[
		m = \inf \{ m' \in \mathbb{R} : p \text{ is of order } m' \},
		\]
		and we write \( p \in S^{[m]}(\mathbb{T}^n \times \mathbb{Z}^n) \).  
		
		If \( p \) is not of order \( m \) for any \( m \in \mathbb{R} \), we say that \( p \) has intrinsic order \( \infty \). Similarly, we define the intrinsic order for the associated operator.  
		
		In particular, we denote by \( S^{[m]}(\mathbb{Z}^n) \) the class of symbols of intrinsic order \( m \in \mathbb{R} \) that are independent of \( x \).
	\end{definition}
	
	For instance, a partial differential operator of order \( m \) has intrinsic order \( m \). Moreover, one can easily verify that an elliptic pseudo-differential operator of order \( m \) also has intrinsic order \( m \). On the other hand, the symbol  
	\[
	p(\xi) = \log(1+|\xi|), \quad \xi \in \mathbb{Z},
	\]
	is not of order \( 0 \), but it is of order \( \varepsilon \) for every \( \varepsilon > 0 \). Therefore, \( p \) has intrinsic order \( 0 \).
    
	A well-known result regarding pseudo-differential operators states that a symbol if a \( p \in S^{m}(\mathbb{T}^n \times \mathbb{Z}^n) \) then \( u \in H^{k+m}(\mathbb{T}^n) \implies  p(D)u \in H^k(\mathbb{T}^n) \) for all \( k \in \mathbb{R} \).  However, the reverse implication does not necessarily hold. 
	
	For instance, consider the differential operator \( \partial_{x_2} \) on \( \mathbb{T}^2 \) and the distribution \( u = \delta_0 \otimes 1 \), where \( \delta_0 \) is the Dirac delta distribution supported at \( x_1 = 0 \).  Although \( \partial_{x_2} \) is a first-order differential operator and \( u \notin  H^{1}(\T^2) \), applying \( \partial_{x_2} \) yields
	\begin{equation*}
		\partial_{x_2} u = 0 \in H^{0}(\T^2),
	\end{equation*}
	and, in fact, \( \partial_{x_2} u \in C^\infty(\mathbb{T}^2)=\bigcap_{k\in\R}H^{k}(\T^2) \).
	
	In this sense, the operator \( \partial_{x_2} \) exhibits a regularizing effect on certain distributions. The goal of this section is to investigate this phenomenon in greater depth.

	\begin{definition}
		Let \( p \in S^{[m]}(\mathbb{Z}^n) \). We say that the operator \( p(D) \) is \textit{globally hypoelliptic with loss of \( r \geq 0 \) derivatives} (denoted GH-\( r \)) if, for every \( k \in \mathbb{R} \), the following implication holds:
		\[
		u \in \mathscr{D}'(\mathbb{T}^n) \text{ and } p(D)u \in H^k(\mathbb{T}^n) \implies u \in H^{k + m - r}(\mathbb{T}^n).
		\]
		
		Additionally, we define the \textit{global hypoellipticity index} of \( p(D) \) as
		\[
		\operatorname{ind}_{GH}(p(D)) \coloneqq \inf\{r \geq 0 : p(D) \text{ is GH-}r\}.
		\]
		with the convention that \( \operatorname{ind}_{GH}(p(D)) = \infty \) if \( p(D) \) is not GH-\( r \) for any \( r \geq 0 \).
	\end{definition}
   
	\begin{remark}
		Note that in the definition above, if we replace the intrinsic order \( m \) of the operator \( p(D) \) with a different order \( m'  \geq m \), then for a given \( r \geq 0 \), the property of \( p(D) \) being GH-\( r \) may depend on the choice of \( m' \). Specifically, the condition \( |p(\xi)| \geq C |\xi|^{m' - r} \), given in the introduction, is sensitive to the value of \( m' \).
		
		Furthermore, by the Sobolev embedding theorem, if \( p(D) \) is GH-\( r \), then for any \( k \in \mathbb{N}_0 \) such that \( k > r+\frac{n}{2}-m \), the condition \( p(D)u = f \in C^k(\mathbb{T}^n) \) implies \( u \in C^{k'}(\mathbb{T}^n) \), where $k'$ denotes the greatest integer {\bf less than} $k+m-r-\frac{n}{2}$.  
        
        % $k'=\lfloor k +m-r-\frac{n}{2}\rfloor$ if $r+\frac{n}{2}-m\not\in \Z$ and $k'=k+m-r-\frac{n}{2}-1$ if $r+n/2-m\in \Z$. Here, \( \lfloor \cdot \rfloor \) denotes the floor function.
	\end{remark}

	We now proceed to establish the main result of this section. Our objective is to characterize the global regularization property of finite order for an operator in terms of the asymptotic behavior of its symbol.

	\begin{theorem}\label{theoGH}
		Let \( r \geq 0 \) and \( p \in S^{[m]}(\mathbb{Z}^n) \). The operator \( p(D) \) is GH-\( r \) if and only if 
		the symbol \( p(\xi) \) vanishes for at most finitely many \( \xi \in \mathbb{Z}^n \) and there exists $K>0$ such that 
		\begin{equation}\label{ineq-GH}
			 |p(\xi)|\geq K|\xi|^{m-r},
		\end{equation}
		for every \( \xi \in \mathbb{Z}^n \setminus \{0\} \) such that \( p(\xi) \neq 0 \).
	\end{theorem}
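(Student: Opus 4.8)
The plan is to prove the two implications separately: the sufficiency of \eqref{ineq-GH} by an explicit parametrix, and the necessity by hand-built counterexample distributions. Throughout I will use freely that on $\mathbb{Z}^n\setminus\{0\}$ the quantities $|\xi|$, $\|\xi\|$ and $(1+\|\xi\|^2)^{1/2}$ are pairwise comparable up to constants depending only on $n$, so they may be swapped inside Sobolev weights, and that a finite set of Fourier modes contributes a trigonometric polynomial, hence an element of $C^\infty(\mathbb{T}^n)=\bigcap_k H^k(\mathbb{T}^n)$, which therefore cannot affect membership in any $H^k(\mathbb{T}^n)$.

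For the ``if'' direction I would assume $p$ vanishes only on a finite set $F\subset\mathbb{Z}^n$ and that \eqref{ineq-GH} holds, and introduce the parametrix $q(\xi):=p(\xi)^{-1}$ for $\xi\notin F$ and $q(\xi):=0$ for $\xi\in F$. Then $|q(\xi)|\le K^{-1}|\xi|^{r-m}\lesssim (1+\|\xi\|^2)^{(r-m)/2}$, so $q$ is a symbol of order $r-m$ and, by the mapping property for symbols recalled above, $q(D)$ maps $H^k(\mathbb{T}^n)$ into $H^{k+m-r}(\mathbb{T}^n)$ for every $k\in\mathbb{R}$. Since $q(\xi)p(\xi)$ equals $1$ for $\xi\notin F$ and $0$ for $\xi\in F$, this yields the identity $q(D)p(D)u=u-\sum_{\xi\in F}\widehat u(\xi)\,e^{ix\cdot\xi}$ for every $u\in\mathscr D'(\mathbb{T}^n)$, the sum being a trigonometric polynomial. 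Hence if $p(D)u\in H^k(\mathbb{T}^n)$ then $q(D)p(D)u\in H^{k+m-r}(\mathbb{T}^n)$, and adding back the trigonometric polynomial gives $u\in H^{k+m-r}(\mathbb{T}^n)$; as $k$ was arbitrary, $p(D)$ is GH-$r$.

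For the ``only if'' direction I would argue the contrapositive in two cases. If $p$ vanishes at infinitely many points, choose distinct $\xi_j$ with $p(\xi_j)=0$; as any infinite subset of $\mathbb{Z}^n$ is unbounded we may assume $\|\xi_j\|\to\infty$, so the lattice-point bound $\#\{\xi\in\mathbb{Z}^n:\|\xi\|\le R\}\lesssim (1+R)^n$ shows $u:=\sum_j e^{ix\cdot\xi_j}\in H^{-s}(\mathbb{T}^n)$ for $s>n/2$, hence $u\in\mathscr D'(\mathbb{T}^n)$; but $p(D)u=0\in C^\infty(\mathbb{T}^n)$ while $u\notin H^t(\mathbb{T}^n)$ for $t\ge 0$, so picking $k$ with $k+m-r\ge0$ contradicts GH-$r$. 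If instead $p$ has only finitely many zeros but \eqref{ineq-GH} fails for every $K>0$, then $\inf\{|p(\xi)|\,(1+\|\xi\|^2)^{(r-m)/2}:\xi\ne0,\ p(\xi)\ne0\}=0$, and since this set is cofinite and the quantity is positive on finite sets, there are $\xi_j$ with $\|\xi_j\|\to\infty$, $p(\xi_j)\ne0$ and $\varepsilon_j:=|p(\xi_j)|\,(1+\|\xi_j\|^2)^{(r-m)/2}\to0$. Passing to a subsequence I may assume the $\xi_j$ distinct with $1+\|\xi_j\|^2\ge 2^j$ and $\varepsilon_j\le 2^{-j}$, and then set $\widehat u(\xi)=0$ off $\{\xi_j\}$ and $\widehat u(\xi_j)$ defined by $(1+\|\xi_j\|^2)^{m-r}|\widehat u(\xi_j)|^2=b_j:=\min\{\varepsilon_j^{-1},\,1+\|\xi_j\|^2\}$. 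Since $b_j\to\infty$ we get $\sum_j b_j=\infty$, so $u\notin H^{m-r}(\mathbb{T}^n)$; since $b_j\le\varepsilon_j^{-1}$ we get $\|p(D)u\|_{H^0}^2=\sum_j\varepsilon_j^2 b_j\le\sum_j\varepsilon_j<\infty$; and since $b_j\le 1+\|\xi_j\|^2$ we get $\|u\|_{H^{-N}}^2\le\sum_j(1+\|\xi_j\|^2)^{1+r-m-N}<\infty$ for $N$ large, so $u\in\mathscr D'(\mathbb{T}^n)$. Taking $k=0$, this again contradicts GH-$r$.

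The step I expect to be the main obstacle is this last construction: one must simultaneously force $u$ out of the target Sobolev space, keep $p(D)u$ in $H^k(\mathbb{T}^n)$, and keep $u$ a genuine distribution, and reconciling these is exactly what the successive passages to subsequences---rapid growth of $\|\xi_j\|$ together with rapid decay of $\varepsilon_j$, and the truncation in the definition of $b_j$---are designed to handle. The parametrix argument and the attendant bookkeeping in the ``if'' direction are, by contrast, routine.
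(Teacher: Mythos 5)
Your proposal is correct and follows essentially the same strategy as the paper: the ``if'' direction is the paper's direct estimate $\widehat{u}(\xi)=\widehat{f}(\xi)/p(\xi)$ merely repackaged as a parametrix $q(\xi)=p(\xi)^{-1}$ of order $r-m$, and the ``only if'' direction uses the same lacunary counterexamples supported on a bad frequency sequence (zeros of $p$, or frequencies where $|p(\xi)|\,|\xi|^{r-m}$ degenerates). The only difference is cosmetic: the paper normalizes the failure as $0<|p(\xi_j)|\le \tfrac1j|\xi_j|^{m-r}$ and takes $\widehat{u}(\xi_j)=1$ (contradicting GH-$r$ at $k=-m+r$), whereas your truncated weights $b_j=\min\{\varepsilon_j^{-1},1+\|\xi_j\|^2\}$ achieve the same contradiction at $k=0$ with a bit more bookkeeping.
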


\begin{proof}
	Suppose first that there exists a sequence \( (\xi_j)_{j\in\N} \) of distinct elements in \( \Z^n\setminus\{0\} \) such that \( p(\xi_j) = 0 \) for every \( j \in \mathbb{N} \). Define a sequence of Fourier coefficients by setting \( \widehat{u}(\xi_j) = 1 \) for all \( j \in \mathbb{N} \) and \( \widehat{u}(\xi) = 0 \) for every other \( \xi \in \mathbb{Z}^n \). Consider the distribution \( u \in \mathscr{D}'(\mathbb{T}^n) \) given by  
	\[
	u(x) = \sum_{\xi\in\Z^n} \widehat{u}(\xi) e^{i\xi\cdot x}.
	\]
	
	Since \( p(\xi_j) = 0 \), it follows that \( p(D)u = 0 \in H^{-m+r}(\mathbb{T}^n) \). However, we also have \( u \notin H^0(\mathbb{T}^n) \), and therefore \( p(D) \) is not GH-\( r \).
	
	\medskip
	Next, suppose that inequality \eqref{ineq-GH} does not hold. Then there exists a sequence \( (\xi_j)_{j\in\N} \) of distinct elements in \( \mathbb{Z}^n \setminus \{0\} \) such that  
	\[
	0 < |p(\xi_j)| \leq  \frac{1}{j} |\xi_j|^{m-r}, \quad j \in \mathbb{N}.
	\]
	
	Define \( \widehat{u}(\xi_j) = 1 \) for all \( j \in \mathbb{N} \) and \( \widehat{u}(\xi) = 0 \) for every other \( \xi \in \mathbb{Z}^n \). Consider again the distribution \( u \in \mathscr{D}'(\mathbb{T}^n) \) given by  
	\[
	u(x) = \sum_{\xi\in\Z^n} \widehat{u}(\xi) e^{i\xi\cdot x}.
	\]
	
	Then, for every \( j \in \mathbb{N} \), we obtain
	\begin{equation*}
		|\widehat{p(D)u}(\xi_j)| = |p(\xi_j)\widehat{u}(\xi_j)| \leq \frac{1}{j} |\xi_j|^{m-r},
	\end{equation*}
	while for all other \( \xi \in \mathbb{Z}^n \setminus \{0\} \), we have \( \widehat{p(D)u}(\xi) = 0 \). This shows that \( p(D)u \in H^{-m+r}(\mathbb{T}^n) \). Indeed, we compute:
	\begin{align*}
		\|p(D)u\|_{H^{-m+r}(\mathbb{T}^n)}^2
		&= \sum_{\xi \in \mathbb{Z}^n} (1+\|\xi\|^2)^{-m+r} |\widehat{p(D)u}(\xi)|^2 \\
		&= \sum_{j \in \mathbb{N}} (1+\|\xi_j\|^2)^{-m+r} \frac{1}{j^2} |\xi_j|^{2(m-r)} \\
		&\leq (1+n)^{|m-r|} \sum_{j \in \mathbb{N}} \frac{1}{j^2} < \infty,
	\end{align*}
	where, in the last inequality, we used the bound  
	\[
	|\xi|^{-2\tau} \leq (1+n)^{|\tau|} (1+\|\xi\|^2)^{-\tau}, \quad \xi \in \mathbb{Z}^n \setminus \{0\},  \tau \in \mathbb{R}.
	\]
	
	This proves that \( p(D)u \in H^{-m+r}(\mathbb{T}^n) \). However, we also have  
	\begin{align*}
		\|u\|_{H^0(\mathbb{T}^n)}^2 &= \sum_{j=1}^{\infty} 1 = \infty,
	\end{align*}
	which implies that \( u \notin H^0(\mathbb{T}^n) \). Consequently, \( p(D) \) is not GH-\( r \).

	\medskip
	On the other hand, assume that both conditions in the theorem hold for some constant \( K > 0 \).
	
	Let \( u \in \mathscr{D}'(\mathbb{T}^n) \) be such that \( p(D)u = f \in H^k(\mathbb{T}^n) \), for some \( k \in \mathbb{R} \). Then, for all but finitely many \( \xi \in \mathbb{Z}^n \), we have  
	\[
	\widehat{u}(\xi) = \frac{\widehat{f}(\xi)}{p(\xi)}.
	\]
	Using this expression, we estimate the Sobolev norm of \( u \):  
	\begin{align*}
		\|u\|_{H^{k+m-r}(\mathbb{T}^n)}^2 &= \sum_{\xi \in \mathbb{Z}^n} (1+\|\xi\|^2)^{k+m-r} |\widehat{u}(\xi)|^2 \\
		&= \sum_{p(\xi) = 0} (1+\|\xi\|^2)^{k+m-r} |\widehat{u}(\xi)|^2 \\
		& \quad + \sum_{p(\xi) \neq 0} (1+\|\xi\|^2)^{k+m-r} \frac{|\widehat{f}(\xi)|^2}{|p(\xi)|^2}.
	\end{align*}
	
	Since \( p(\xi) = 0 \) for only finitely many \( \xi \), the first sum is finite and can be bounded by some constant \( K' > 0 \). For the second sum, applying the lower bound assumption on \( |p(\xi)| \), we obtain  
	\begin{align*}
		\sum_{p(\xi) \neq 0} (1+\|\xi\|^2)^{k+m-r} \frac{|\widehat{f}(\xi)|^2}{|p(\xi)|^2}
		&\leq \frac{(1+n)^{|m-r|}}{K^2}  \sum_{p(\xi)\neq 0} (1+\|\xi\|^2)^k |\widehat{f}(\xi)|^2 \\
		&\leq  \frac{(1+n)^{|m-r|}}{K^2} \|f\|_{H^k(\mathbb{T}^n)}^2 < \infty.
	\end{align*}

	Thus, \( \|u\|_{H^{k+m-r}(\mathbb{T}^n)} < \infty \), which implies that \( u \in H^{k+m-r}(\mathbb{T}^n) \), as required. 	
	We conclude that \( p(D) \) is GH-\( r \), completing the proof.
	\end{proof}
	
	The proof of Theorem \ref{theoGH} naturally leads to the following result, which provides a lower bound for the global regularization index of an operator.
	
	\begin{corollary}\label{coroGH}
		Let \( r' > 0 \) and \( p \in S^{[m]}(\mathbb{Z}^n) \). 
		If there exists a constant \( K' > 0 \) such that
		\begin{equation}\label{ineq-not-GH-coro}
			|p(\xi)| \leq K'|\xi|^{m - r'}
		\end{equation}
		for infinitely many \( \xi \in \mathbb{Z}^n \setminus \{0\} \), then the operator \( p(D) \) is not GH-\( r \) for any \( 0 \leq r < r' \). 
		Consequently, \[ \operatorname{ind}_{GH}(p(D)) \geq r'. \]
	\end{corollary}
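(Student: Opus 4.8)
The plan is to deduce this directly from the characterization in Theorem~\ref{theoGH}, by showing that under the stated hypothesis the lower bound \eqref{ineq-GH} with exponent $m-r$ cannot hold once $0 \le r < r'$. So I would fix such an $r$, set $S = \{\xi \in \mathbb{Z}^n \setminus \{0\} : |p(\xi)| \le K'|\xi|^{m-r'}\}$, which is infinite by hypothesis, and split into two cases according to whether $S$ contains infinitely many zeros of $p$.

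\emph{Case 1: $p(\xi) = 0$ for infinitely many $\xi \in S$.} Then $p$ vanishes at infinitely many points of $\mathbb{Z}^n \setminus \{0\}$, so the first clause of Theorem~\ref{theoGH} fails; hence $p(D)$ is not GH-$r$ (indeed for no $r \ge 0$). This is exactly the situation treated in the first paragraph of the proof of Theorem~\ref{theoGH}.

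\emph{Case 2: only finitely many $\xi \in S$ satisfy $p(\xi)=0$.} Then there are infinitely many $\xi \in \mathbb{Z}^n \setminus \{0\}$ with $0 < |p(\xi)| \le K'|\xi|^{m-r'}$, and since every ball in $\mathbb{Z}^n$ contains only finitely many lattice points, I can extract a sequence $(\xi_j)$ of distinct such points with $|\xi_j| \to \infty$. Writing $|\xi_j|^{m-r'} = |\xi_j|^{m-r}\,|\xi_j|^{r-r'}$ and noting $r-r' < 0$, we get $|p(\xi_j)| \le K'|\xi_j|^{r-r'}\,|\xi_j|^{m-r}$ with $K'|\xi_j|^{r-r'} \to 0$. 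Therefore no constant $K>0$ can satisfy $|p(\xi_j)| \ge K|\xi_j|^{m-r}$ for all large $j$, so inequality \eqref{ineq-GH} (with this $r$) fails, and Theorem~\ref{theoGH} gives that $p(D)$ is not GH-$r$. Finally, since $p(D)$ is not GH-$r$ for any $r \in [0, r')$, the definition of $\operatorname{ind}_{GH}(p(D))$ as the infimum of $\{r \ge 0 : p(D) \text{ is GH-}r\}$ forces $\operatorname{ind}_{GH}(p(D)) \ge r'$, including the case where that set is empty and the index equals $\infty$.

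The only point requiring a little care — the ``main obstacle'', such as it is — is the passage in Case~2 from ``infinitely many $\xi$'' to ``a sequence of distinct $\xi_j$ with $|\xi_j| \to \infty$'' (immediate from local finiteness of $\mathbb{Z}^n$) and the bookkeeping of the sign of the exponent $r-r'$, ensuring the extra power of $|\xi_j|$ genuinely decays; everything else is a direct appeal to Theorem~\ref{theoGH}.
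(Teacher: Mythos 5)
Your proof is correct and follows essentially the same route as the paper: a case split on whether $p$ has infinitely many zeros, followed by a reduction to the characterization in Theorem~\ref{theoGH}. The only cosmetic difference is that the paper renormalizes the sequence so that $|p(\xi_j)| \le \tfrac{1}{j}|\xi_j|^{m-r}$ in order to reuse the explicit construction from the proof of Theorem~\ref{theoGH}, whereas you simply observe that $K'|\xi_j|^{r-r'} \to 0$ rules out any uniform constant $K$ in \eqref{ineq-GH}, which is an equally valid (and slightly more direct) appeal to the theorem's statement.
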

	
	\begin{proof}
		If \( p(\xi) \) has infinitely many zeros, the previous theorem ensures that the operator is not GH-\( r \). Thus, we may assume that the set of zeros of \( p(\xi) \) is finite and that \( p(\xi) \) satisfies \eqref{ineq-not-GH-coro}. 
		
		Let \( r < r' \) and write \( r' = r + \varepsilon \) for some \( \varepsilon > 0 \). Under this assumption, we obtain the bound  
		\begin{equation*}
			0 < |p(\xi)| \leq K' |\xi|^{m - r} |\xi|^{-\varepsilon},
		\end{equation*}
		for infinitely many \( \xi \in \mathbb{Z}^n \setminus \{0\} \).
				
		By selecting an appropriate sequence, we may assume that \( |\xi_j| \geq (K' j)^{\frac{1}{\varepsilon}} \) for every \( j \in \mathbb{N} \). This yields  
		\begin{equation*}
			|p(\xi_j)| \leq \frac{1}{j} |\xi_j|^{m - r},
		\end{equation*}
		for all \( j \in \mathbb{N} \). 
		
		As in the proof of Theorem \ref{theoGH}, this estimate ensures that \( p(D) \) fails to be GH-\( r \), as claimed.
	\end{proof}
	
	The proof of the last corollary provides a natural framework for a better understanding of the global hypoellipticity index. Let us establish the precise conditions on the symbol that characterize this index.
	
	\begin{theorem}\label{theoGHindex}
		Let \( r \geq 0 \) and \( p \in S^{[m]}(\mathbb{Z}^n) \). Then  \( \operatorname{ind}_{GH}(p(D)) = r \) if and only if the following conditions hold:
		\begin{enumerate}
			\item[{\it (i)}] The symbol \( p(\xi) \) vanishes for only finitely many \( \xi \in \mathbb{Z}^n \).
			
			\item[{\it (ii)}] For every \( \varepsilon > 0 \), there exists a constant \( K_\varepsilon > 0 \) such that
			\begin{equation}\label{ineqGH}
				|p(\xi)| \geq K_\varepsilon |\xi|^{m - (r + \varepsilon)},
			\end{equation}
			for all \( \xi \in \mathbb{Z}^n \setminus \{0\} \) satisfying \( p(\xi) \neq 0 \).
			
			\item[{\it (iii)}] If \( r > 0 \), then for every \( \varepsilon > 0 \), there exists a constant \( K_{\varepsilon}' > 0 \) such that
			\begin{equation}\label{ineqGH_2}
				0 < |p(\xi)| \leq K_{\varepsilon}' |\xi|^{m - (r - \varepsilon)},
			\end{equation}
			for infinitely many \( \xi \in \mathbb{Z}^n \setminus \{0\} \).
		\end{enumerate}
	\end{theorem}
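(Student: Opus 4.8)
The plan is to deduce both halves of the equivalence directly from Theorem~\ref{theoGH} and Corollary~\ref{coroGH}, reading the identity $\operatorname{ind}_{GH}(p(D)) = r$ as the conjunction of $\operatorname{ind}_{GH}(p(D)) \le r$ and $\operatorname{ind}_{GH}(p(D)) \ge r$ and translating back and forth between these bounds and the asymptotic behaviour of $p(\xi)$.

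For the forward direction I would assume $\operatorname{ind}_{GH}(p(D)) = r$. Finiteness of the index means the set $\{r' \ge 0 : p(D)\text{ is GH-}r'\}$ is nonempty, so $p(D)$ is GH-$r'$ for some $r'$, and Theorem~\ref{theoGH} immediately yields (i). For (ii), fix $\varepsilon > 0$; by the infimum characterization there is an $r'$ with $r \le r' < r + \varepsilon$ for which $p(D)$ is GH-$r'$, so Theorem~\ref{theoGH} gives $K_{r'} > 0$ with $|p(\xi)| \ge K_{r'}|\xi|^{m-r'}$ on the relevant set; since $|\xi| \ge 1$ and $m - r' \ge m - (r+\varepsilon)$, the same constant works as $K_\varepsilon$. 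For (iii), assume $r > 0$ and suppose (iii) failed for some $\varepsilon_0 > 0$; taking $K' = 1$ in the negated statement, the set $\{\xi \ne 0 : 0 < |p(\xi)| \le |\xi|^{m-(r-\varepsilon_0)}\}$ is finite, and splitting off those finitely many $\xi$ (using (i) to also absorb the finitely many zeros) produces a single constant $K > 0$ with $|p(\xi)| \ge K|\xi|^{m-(r-\varepsilon_0)}$ for all $\xi \ne 0$ with $p(\xi) \ne 0$. Setting $\rho := \max\{0, r-\varepsilon_0\}$ and using $|\xi| \ge 1$ to get $|\xi|^{m-(r-\varepsilon_0)} \ge |\xi|^{m-\rho}$, Theorem~\ref{theoGH} together with (i) shows $p(D)$ is GH-$\rho$, whence $\operatorname{ind}_{GH}(p(D)) \le \rho < r$, a contradiction.

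For the converse I would assume (i)--(iii). From (i) and (ii), for every $\varepsilon > 0$ Theorem~\ref{theoGH} shows $p(D)$ is GH-$(r+\varepsilon)$, so $\operatorname{ind}_{GH}(p(D)) \le r + \varepsilon$, and letting $\varepsilon \to 0^+$ gives $\operatorname{ind}_{GH}(p(D)) \le r$. For the reverse inequality, if $r = 0$ there is nothing to prove; if $r > 0$, then for each $\varepsilon \in (0,r)$ condition (iii) is precisely hypothesis~\eqref{ineq-not-GH-coro} of Corollary~\ref{coroGH} with $r' = r - \varepsilon > 0$, so $\operatorname{ind}_{GH}(p(D)) \ge r - \varepsilon$, and letting $\varepsilon \to 0^+$ gives $\operatorname{ind}_{GH}(p(D)) \ge r$. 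Combining the two bounds proves the claim.

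I expect the only real subtleties to be bookkeeping. In (iii) one must allow $\varepsilon_0 > r$, where $r - \varepsilon_0 < 0$ and ``GH-$(r-\varepsilon_0)$'' is undefined; this is handled by the truncation $\rho = \max\{0,r-\varepsilon_0\}$ and the elementary inequality $|\xi|^{m-(r-\varepsilon_0)} \ge |\xi|^{m-\rho}$, valid because $|\xi| \ge 1$ on $\mathbb{Z}^n \setminus \{0\}$. In the converse, Corollary~\ref{coroGH} needs $r' > 0$, which is why one first restricts to $\varepsilon < r$ before passing to the limit. Beyond these points the argument is a routine dictionary between the lower and upper bounds on the symbol and the GH-$r$ property.
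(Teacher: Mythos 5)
Your proposal is correct and takes essentially the same route as the paper: both directions are reduced to Theorem \ref{theoGH} and Corollary \ref{coroGH} via the infimum characterization of the index, with your necessity argument being just the direct form of the paper's contrapositive. In fact your handling of the edge case $\varepsilon_0 > r$ (truncating to $\rho = \max\{0, r-\varepsilon_0\}$ and using $|\xi| \ge 1$) is slightly more careful than the paper's proof, which tacitly treats $r-\varepsilon$ as an admissible loss when concluding that $p(D)$ is GH-$(r-\varepsilon)$.
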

	
	\begin{proof}
		Assume first that conditions {\it (i)}, {\it (ii)}, and {\it (iii)} hold. By Theorem \ref{theoGH}, conditions {\it (i)} and {\it (ii)} ensure that \( p(D) \) is GH-\( (r + \varepsilon) \) for every \( \varepsilon > 0 \), which implies \( \operatorname{ind}_{GH}(p(D)) \leq r \).  
		
		If \( r = 0 \), we conclude that \( \operatorname{ind}_{GH}(p(D)) = 0 \). If \( r > 0 \), Corollary \ref{coroGH} and condition {\it (iii)} imply that \( p(D) \) is not GH-\( s \) for any \( s < r \). Hence, we obtain \( \operatorname{ind}_{GH}(p(D)) = r \), as claimed.  
		
		Conversely,  if {\it (i)} or {\it (ii)} fail, then by Theorem \ref{theoGH}, there exists \( \varepsilon > 0 \) such that \( p(D) \) is not GH-\( (r+\varepsilon) \), leading to \( \operatorname{ind}_{GH}(p(D)) > r \).  
			
		If {\it (iii)} fails, then there exists \( \varepsilon > 0 \) such that  
			\begin{equation*}
				0 < |p(\xi)| \leq |\xi|^{m - (r - \varepsilon)}
			\end{equation*}
			only for finitely many \( \xi \in \mathbb{Z}^n \setminus \{0\} \). In this case, we can choose \( K > 0 \) sufficiently large so that  
			\begin{equation*}
				|p(\xi)| \geq K |\xi|^{m - (r - \varepsilon)},
			\end{equation*}
			for all \( \xi \in \mathbb{Z}^n \setminus \{0\} \) with \( p(\xi) \neq 0 \). We may also assume by the previous case that $p(\xi)=0$ for only finitely many $\xi\in\Z^n$. 
			
			By Theorem \ref{theoGH}, this implies that \( p(D) \) is GH-\( (r-\varepsilon) \),  thus, we must have \( \operatorname{ind}_{GH}(p(D)) < r \), completing the proof.
	\end{proof}
	
	\begin{remark}\label{elliptic_is_GH-0}
		A direct consequence of Theorem \ref{theoGH} is that every elliptic Fourier multiplier is GH-\( 0 \). Indeed, an elliptic symbol of intrinsic order \( m \) is also of order \( m \), and such operators satisfy the conditions of Theorem \ref{theoGH}. Consequently, their global hypoellipticity index is zero. However, not every operator with a global hypoellipticity index equal to zero is elliptic, as illustrated by the following example.
	\end{remark}
	
	\begin{example}\label{example_log}
		Consider the Fourier multiplier on \( \mathbb{T}^1 \) with symbol  
		\[
		p(\xi) = \frac{\xi}{\log(e + |\xi|)}, \quad \xi \in \mathbb{Z}.
		\]
		
		It is clear that \( p(\xi) \) has intrinsic order \( 1 \). Moreover, for every \( \varepsilon > 0 \), there exists a constant \( K_{\varepsilon} > 0 \) such that  
		\begin{equation*}
			|p(\xi)| \geq K_{\varepsilon} |\xi|^{1 - \varepsilon}, \quad \xi \in \mathbb{Z} \setminus \{0\}.
		\end{equation*}
		
		This follows from the asymptotic behavior  
		\[
		\lim_{|\xi| \to \infty} \frac{|\xi|^{\varepsilon}}{\log(e + |\xi|)} = \infty.
		\]
		
		Thus, by Theorem \ref{theoGHindex}, we conclude that  
		\[
		\operatorname{ind}_{GH}(p(D)) = 0.
		\]

		However, observe that there does not exist a constant \( K > 0 \) such that  
		\begin{equation*}
			|p(\xi)| \geq K |\xi|, \quad \xi \in \mathbb{Z} \setminus \{0\}.
		\end{equation*}
		
		This follows from the fact that  
		\[
		\lim_{|\xi| \to \infty} \frac{|p(\xi)|}{|\xi|} = 0.
		\]
		
		Therefore, \( p(D) \) is not elliptic, confirming that the converse of Remark \ref{elliptic_is_GH-0} does not hold.
	\end{example}
	
	\begin{example}\label{Laplacian_GH-0}
		The Laplacian \( \Delta_{\mathbb{T}^n} \), whose symbol is given by 
		\[
		p(\xi) = -\sum_{j=1}^{n} \xi_j^2,
		\]
		satisfies the conditions of Theorem \ref{theoGH} with \( r = 0 \). Consequently, \( \Delta_{\mathbb{T}^n} \) is globally hypoelliptic with no loss of derivatives, i.e., GH-\( 0 \).
		
		Similarly, the operator \( \partial_x \) on \( \mathbb{T}^1 \), with symbol \( p(\xi) = i\xi \), also satisfies these conditions and is therefore GH-\( 0 \). This result is consistent with Remark \ref{elliptic_is_GH-0}, where we established that elliptic Fourier multipliers are always GH-\( 0 \).
	\end{example}

	\begin{example}\label{Heat_GH-1}
		The heat operator \( p(D) = \partial_{x_1} - \Delta_{\mathbb{T}^n} \) on \( \mathbb{T}^{n+1} \) is not elliptic and has symbol
		\[
		p(\xi) = i\xi_1 + \sum_{j=2}^{n+1} \xi_j^2.
		\]
		
		A direct analysis of the symbol shows that for \( \xi \in \mathbb{Z}^{n+1} \),
		\[
		|p(\xi)|^2 = \xi_1^2 + \left( \sum_{j=2}^{n+1} \xi_j^2 \right)^2 \geq \|\xi\|^2 \geq \frac{1}{n+1} |\xi|^2.
		\]
		
		Taking square roots, we obtain
		\[
		|p(\xi)| \geq (n+1)^{-\frac{1}{2}} |\xi| = (n+1)^{-\frac{1}{2}} |\xi|^{2 - 1}, \ \xi \in \mathbb{Z}^{n+1}.
		\]
		
		On the other hand, for \( \xi \in \mathbb{Z}^{n+1} \) such that \( \xi_2 = \dots = \xi_{n+1} = 0 \), we have
		\[
		|p(\xi)| = |i\xi_1| = |\xi_1| = |\xi| \leq |\xi|^{2 - 1},
		\]
		which holds for infinitely many \( \xi \in \mathbb{Z}^{n+1} \setminus \{0\} \).
		
		By Theorem \ref{theoGHindex}, these estimates imply that the heat operator is GH-\( 1 \), and its global hypoellipticity index satisfies
		\[
		\operatorname{ind}_{GH}(p(D)) = 1.
		\]

	 In particular, for any $k>\frac{n+1}{2}-1$, any solutions $u$ for the heat equation $\partial_{x_1}u-\Delta_{\T^n}u=f$, where $f\in C^{k}(\T^{n+1})$, must satisfy $u\in C^{k'}(\T^{n+1})$, where $k'=k-\frac{n+1}{2}$ if $n$ is odd and $k'=k-\frac{n}{2}$ if $n$ is even.
	\end{example}

	\begin{example}\label{Wave_GH-infty}
		The wave operator \( p(D) = \partial_{x_1}^2 - \eta^2 \Delta_{\mathbb{T}^n} \), where \( \eta=a/b\in\mathbb{Q}_+ \), acts on \( \mathbb{T}^{n+1} \) and has the symbol
		\[
		p(\xi) = -\xi_1^2 + \frac{a^2}{b^2} \sum_{j=2}^{n+1} \xi_j^2.
		\]	
		This symbol vanishes on the set of frequencies satisfying
		\[
		b^2\xi_1^2 = {a^2} \|\xi'\|^2, \quad \xi' = (\xi_2, \dots, \xi_{n+1}) \in \mathbb{Z}^n.
		\]
		
		In particular, for \( \xi_3 = \dots = \xi_{n+1} = 0 \), this reduces to \( b\xi_1 = \pm a  \xi_2 \).		
		Since \( p(\xi) = 0 \) on this infinite set, condition {\it (i)} of Theorem \ref{theoGH} is violated. Consequently, \( p(D) \) fails to be GH-\( r \) for any \( r \geq 0 \), and its global hypoellipticity index is
		\[
		\operatorname{ind}_{GH}(p(D)) = \infty.
		\]
         In particular, solutions $u$ for the wave equation $\partial_{x_1}^2u-(a^2/b^2)\Delta_{\T^n}u=f$ can exhibit arbitrarily low regularity, regardless of the smoothness of $f$. 
	\end{example}

	Before stating the next result, we recall the concept of global hypoellipticity and its characterization for Fourier multipliers.
	
	\begin{definition}
		Let \( p \in S^{m}(\mathbb{Z}^n) \). The operator \( p(D) \) is said to be \textit{globally hypoelliptic} (denoted GH) if the following implication holds:
		\[
		u \in \mathscr{D}'(\mathbb{T}^n) \text{ and } p(D)u \in C^\infty(\mathbb{T}^n) \implies u \in C^\infty(\mathbb{T}^n).
		\]
	\end{definition}
	
	\begin{theorem}[Greenfield–Wallach]\label{theoGW}
		Let \( p \in S^{m}(\mathbb{Z}^n) \). The operator \( p(D) \) is globally hypoelliptic if and only if:
		\begin{enumerate}
			\item[{\it (i)}] The symbol \( p(\xi) \) vanishes for only finitely many \( \xi \in \mathbb{Z}^n \).
			
			\item[{\it (ii)}] There exist positive constants \( K, L > 0 \) such that
			\begin{equation}\label{ineqGW}
				|p(\xi)| \geq K |\xi|^{-L},
			\end{equation}
			for all \( \xi \in \mathbb{Z}^n \setminus \{0\} \) satisfying \( p(\xi) \neq 0 \).
		\end{enumerate}
	\end{theorem}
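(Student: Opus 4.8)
The plan is to follow the template of the proof of Theorem~\ref{theoGH}; informally, global hypoellipticity is the assertion that $p(D)$ is GH-$r$ for \emph{some} finite $r$, only now with no control on the size of $r$, which is precisely why the exponent $-L$ in \eqref{ineqGW} is allowed to be arbitrary. For the sufficiency direction I would assume (i) and (ii) and let $m_0\le m$ be the intrinsic order of $p$ (finite, since $p\in S^m(\mathbb{Z}^n)$). Rewriting the bound as $|p(\xi)|\ge K|\xi|^{m_0-(m_0+L)}$ and enlarging $L$ if needed so that $r:=m_0+L\ge 0$ (harmless, since $|\xi|\ge 1$ for $\xi\ne 0$), conditions (i)--(ii) become exactly the hypotheses of Theorem~\ref{theoGH}, so $p(D)$ is GH-$r$. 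Hence if $u\in\mathscr{D}'(\mathbb{T}^n)$ with $p(D)u\in C^\infty(\mathbb{T}^n)=\bigcap_{k\in\mathbb{R}}H^k(\mathbb{T}^n)$, then $u\in H^{k+m_0-r}(\mathbb{T}^n)$ for every $k$, whence $u\in C^\infty(\mathbb{T}^n)$; that is, $p(D)$ is GH. (Equivalently, one can repeat the final computation in the proof of Theorem~\ref{theoGH}: off the finite zero set $Z$ of $p$ one has $\widehat u(\xi)=\widehat f(\xi)/p(\xi)$, and $\|u\|_{H^s(\mathbb{T}^n)}^2$ is a finite sum over $Z$ plus $\sum_{\xi\notin Z}(1+\|\xi\|^2)^s\,|\widehat f(\xi)|^2/|p(\xi)|^2\le K^{-2}(1+n)^L\|f\|_{H^{s+L}(\mathbb{T}^n)}^2<\infty$ for every $s\in\mathbb{R}$, using $|\xi|^{2L}\le(1+n)^L(1+\|\xi\|^2)^L$.)

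For necessity I would argue by contraposition, in each case exhibiting a counterexample of the form $u=\sum_j e^{i\xi_j\cdot x}$, where $(\xi_j)$ is a sequence of distinct elements of $\mathbb{Z}^n\setminus\{0\}$ and $\widehat u(\xi_j)=1$ with all other Fourier coefficients $0$; such a $u$ always lies in $\mathscr{D}'(\mathbb{T}^n)$ but never in $H^0(\mathbb{T}^n)$, hence never in $C^\infty(\mathbb{T}^n)$. If (i) fails, I take $(\xi_j)$ to enumerate the infinitely many nonzero zeros of $p$; then $p(D)u=0\in C^\infty(\mathbb{T}^n)$, so $p(D)$ is not GH. If (ii) fails, I claim one can choose distinct $\xi_j$ with $p(\xi_j)\ne 0$, $|\xi_j|\to\infty$, and $0<|p(\xi_j)|<|\xi_j|^{-j}$; granting this, $\widehat{p(D)u}(\xi_j)=p(\xi_j)$ is rapidly decreasing — for each $N$ and all $j\ge N$ one has $|p(\xi_j)|<|\xi_j|^{-j}\le|\xi_j|^{-N}$, and only finitely many $j$ remain — so $p(D)u\in C^\infty(\mathbb{T}^n)$ while $u\notin C^\infty(\mathbb{T}^n)$, again showing $p(D)$ is not GH.

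The one step that requires genuine care is the extraction of $(\xi_j)$ when (ii) fails. Negating (ii) with $K=1$ and $L=j$ shows that $A_j:=\{\xi\in\mathbb{Z}^n\setminus\{0\}:p(\xi)\ne 0,\ |p(\xi)|<|\xi|^{-j}\}$ is nonempty for every $j$; the crucial point is that $A_j$ must be unbounded. Indeed, if $A_j\subseteq\{\xi:|\xi|\le R\}$ for some $R$, then $\{\xi\ne 0:p(\xi)\ne 0,\ |\xi|\le R\}$ is a finite set, on which $|p(\xi)|\ge\delta>0$; taking $L=j$ and $K=\min\{1,\delta\}$ would then verify (ii) — on $\{|\xi|\le R\}$ because $K|\xi|^{-L}\le K\le\delta\le|p(\xi)|$ (as $|\xi|\ge 1$ gives $|\xi|^{-L}\le 1$), and on $\{|\xi|>R\}$ because $\xi\notin A_j$ forces $|p(\xi)|\ge|\xi|^{-j}=|\xi|^{-L}$ — contradicting the failure of (ii). Hence I may pick $\xi_j\in A_j$ inductively with $|\xi_j|>\max\{j,|\xi_1|,\dots,|\xi_{j-1}|\}$, producing a sequence with the required properties. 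Apart from this observation, the remaining work is the same Sobolev-norm bookkeeping already carried out in the proofs of Theorem~\ref{theoGH} and Corollary~\ref{coroGH}, and I do not anticipate any further difficulty.
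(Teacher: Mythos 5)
Your proof is correct. Note that the paper itself does not prove this theorem: it simply cites Greenfield--Wallach \cite{GW1972_pams} and remarks that the argument is the same as in the differential case, so your write-up supplies exactly the details the paper omits, and it does so along the standard lines (sufficiency by dividing Fourier coefficients off the finite zero set, as in Theorem~\ref{theoGH}; necessity by building a lacunary counterexample $u=\sum_j e^{i\xi_j\cdot x}$). The one genuinely delicate point, extracting a sequence $(\xi_j)$ with $|\xi_j|\to\infty$ and $0<|p(\xi_j)|<|\xi_j|^{-j}$ from the failure of (ii), is handled correctly via your observation that each set $A_j$ must be unbounded, and this is what guarantees both the distinctness of the $\xi_j$ and the rapid decay of $\widehat{p(D)u}$. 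A very minor remark: finiteness of the intrinsic order $m_0$ follows from $p\in S^m$ only as an upper bound, and in degenerate cases the infimum could be $-\infty$; however, under hypotheses (i)--(ii) the lower bound $|p(\xi)|\ge K|\xi|^{-L}$ rules this out, and in any case your parenthetical direct Sobolev estimate proves sufficiency without invoking Theorem~\ref{theoGH} at all, so no gap results.
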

	
	The proof of this theorem follows the same reasoning as in the differential case discussed in \cite{GW1972_pams}.
		
	It is worth noting that the original Greenfield–Wallach condition for global hypoellipticity on the torus involves constants \( K, L, M > 0 \) such that
	\[
	|p(\xi)| \geq K |\xi|^{-L} \quad \text{for all } |\xi| > M.
	\]
	
	Our formulation is equivalent and aligns with the approach we have adopted in previous studies for vector fields on Lie groups \cite{KMR2020_bsm, KMR2021_jfa, Kiri_Kow_Wagn_ToroEsfera} and pseudodifferential operators on closed manifolds \cite{AGK2018_jam, AK2019_jst}.

	\begin{corollary}\label{coro_GH_equiv}
		Let \( p \in S^{[m]}(\mathbb{Z}^n) \). The following statements are equivalent:
		\begin{enumerate}
			\item[{\it (i)}] \( p(D) \) is globally hypoelliptic (GH).
			\item[{\it (ii)}] \( p(D) \) is GH-\( r \) for some \( r \geq 0 \).
            \item[{\it (iii)}] \( \operatorname{ind}_{GH}(p(D)) < \infty \).
		\end{enumerate}
	\end{corollary}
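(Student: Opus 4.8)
The plan is to deduce all the implications from Theorems \ref{theoGH} and \ref{theoGW}, establishing $(ii)\Rightarrow(i)\Rightarrow(ii)$ together with $(ii)\Leftrightarrow(iii)$. I would first record a monotonicity fact: if $p(D)$ is GH-$r$, then it is GH-$r'$ for every $r'\geq r$. Indeed, by Theorem \ref{theoGH} the GH-$r$ property is equivalent to $p(\xi)$ vanishing for at most finitely many $\xi$ together with a bound $|p(\xi)|\geq K|\xi|^{m-r}$ on $\{\xi\in\Z^n\setminus\{0\}:p(\xi)\neq 0\}$; since $|\xi|\geq 1$ on that set, we have $|\xi|^{m-r'}\leq|\xi|^{m-r}$, so the same inequality holds with $r'$ in place of $r$, while the finiteness of the zero set is untouched. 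Hence $\{r\geq 0:p(D)\text{ is GH-}r\}$ is upward closed in $[0,\infty)$. Granting this, $(ii)\Leftrightarrow(iii)$ is immediate from the definition of $\operatorname{ind}_{GH}$: if $p(D)$ is GH-$r$ then $\operatorname{ind}_{GH}(p(D))\leq r<\infty$, and conversely the finiteness of the index means, by the stated convention, that the above set is nonempty, so $p(D)$ is GH-$r$ for some $r\geq 0$.

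For $(ii)\Rightarrow(i)$, I would argue as follows. Suppose $p(D)$ is GH-$r$ and let $u\in\mathscr{D}'(\T^n)$ satisfy $p(D)u\in C^\infty(\T^n)=\bigcap_{k\in\R}H^k(\T^n)$. Then $p(D)u\in H^k(\T^n)$ for every $k\in\R$, hence $u\in H^{k+m-r}(\T^n)$ for every $k\in\R$; intersecting over all $k$ gives $u\in\bigcap_{k\in\R}H^k(\T^n)=C^\infty(\T^n)$, so $p(D)$ is globally hypoelliptic.

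For $(i)\Rightarrow(ii)$, suppose $p(D)$ is GH. By the Greenfield–Wallach characterization (Theorem \ref{theoGW}), $p(\xi)$ vanishes for only finitely many $\xi$ and there are constants $K,L>0$ with $|p(\xi)|\geq K|\xi|^{-L}$ for all $\xi\in\Z^n\setminus\{0\}$ with $p(\xi)\neq 0$. I would set $r:=\max\{m+L,\,0\}\geq 0$; then $m-r\leq -L$, and since $|\xi|\geq 1$ we get $|\xi|^{m-r}\leq|\xi|^{-L}$, whence $|p(\xi)|\geq K|\xi|^{m-r}$ on the relevant frequency set. By Theorem \ref{theoGH}, $p(D)$ is GH-$r$, which is $(ii)$. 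No step here presents a genuine obstacle; the only mild subtlety is the choice $r=\max\{m+L,0\}$ in this last implication, which is forced because GH-$r$ is only defined for $r\geq 0$ while $m+L$ may be negative when the intrinsic order $m$ is very negative, and the use of the convention/monotonicity to see that a finite index actually produces an admissible value of $r$.
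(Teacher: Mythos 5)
Your proof is correct; the only difference from the paper lies in how you handle the implication (ii) $\Rightarrow$ (i). For (i) $\Rightarrow$ (ii) you argue exactly as the paper does: apply Theorem \ref{theoGW} to obtain finitely many zeros and $|p(\xi)|\geq K|\xi|^{-L}$, then adjust the exponent so that it becomes admissible (your $r=\max\{m+L,0\}$ plays the same role as the paper's replacement of $L$ by a larger constant with $L\geq -m$, both exploiting $|\xi|\geq 1$), and conclude via Theorem \ref{theoGH}. For (ii) $\Rightarrow$ (i), the paper stays at the symbol level: from GH-$r$ it extracts, via Theorem \ref{theoGH}, the finite zero set and the bound $|p(\xi)|\geq K|\xi|^{m-r}$, and then invokes Theorem \ref{theoGW} to get global hypoellipticity. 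You instead argue directly from the definitions, using $C^\infty(\mathbb{T}^n)=\bigcap_{k\in\mathbb{R}}H^k(\mathbb{T}^n)$ and the fact that $k+m-r$ sweeps all of $\mathbb{R}$; this is more elementary, makes no use of the Greenfield--Wallach characterization in that direction, and would apply to any operator satisfying the GH-$r$ implication even without a symbol characterization, whereas the paper's route keeps the whole corollary uniformly reduced to symbol estimates. Your treatment of (ii) $\Leftrightarrow$ (iii) agrees with the paper's (it is immediate from the definition of $\operatorname{ind}_{GH}$ and the stated convention); the monotonicity fact you record is true but not actually needed for that equivalence.
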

	
	\begin{proof}
		By definition, {\it (iii)} is equivalent to {\it (ii)}, so it suffices to prove that {\it (i)} is equivalent to {\it (ii)}.
		
		First, assume \( p(D) \) is GH-\( r \) for some \( r \geq 0 \). Then by Theorem \ref{theoGH}, the symbol $p(\xi)$ vanishes for only finitely many $\xi\in\Z^n$ and there exists $K>0$ such that inequality \ref{ineqGW} holds for $L=-m+r$. Therefore by Theorem \ref{theoGW} we have that $p(D)$ is globally hypoelliptic.
		
		Conversely, assume \( p(D) \) is globally hypoelliptic. By Theorem \ref{theoGW}, \( p(\xi) \) vanishes for only finitely many \( \xi \in \mathbb{Z}^n \), and there exist constants \( K, L > 0 \) such that
		\[
		|p(\xi)| \geq K |\xi|^{-L}, \quad \text{for all } \xi \neq 0 \text{ with } p(\xi) \neq 0.
		\]
		
		Notice that the inequality above still holds if we replace \( L \) with any larger constant. Thus, choosing \( L \) sufficiently large so that \( L \geq -m \), Theorem \ref{theoGH} implies that \( p(D) \) is GH-\( r \) for \( r = m + L \geq 0 \). Consequently, we conclude that \( \operatorname{ind}_{GH}(p(D)) < \infty \).  		
	\end{proof}
	
	Next, we demonstrate that verifying global hypoellipticity of order \( r \) for Fourier multipliers can be reduced to checking a single Sobolev order.
	
    \begin{proposition}\label{prop_single_sobolev_order}
    	Let \( p \in S^{[m]}(\mathbb{Z}^n) \). The operator \( p(D) \) is GH-\( r \) if and only if there exists a fixed \( k \in \mathbb{R} \) such that for every \( u \in \mathscr{D}'(\mathbb{T}^n) \), the condition \( p(D)u \in H^k(\mathbb{T}^n) \) implies \( u \in H^{k + m - r}(\mathbb{T}^n) \).
    \end{proposition}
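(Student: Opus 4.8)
The plan is to reduce everything to the single elementary fact that any two Fourier multipliers on $\mathbb{T}^n$ commute, which allows a regularization estimate at one Sobolev level to be transported to all of them. The forward implication requires no work: by the very definition of GH-$r$, the stated implication holds for \emph{every} $k\in\mathbb{R}$, hence in particular there exists a fixed $k$ for which it holds. So I would concentrate on the converse.

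For the converse, I would fix some $k_0\in\mathbb{R}$ for which the implication holds, and for each $s\in\mathbb{R}$ bring in the Bessel-potential Fourier multiplier $\Lambda^s$ with symbol $(1+\|\xi\|^2)^{s/2}$. I would record two routine facts: first, $\Lambda^s$ is an isometric isomorphism $H^t(\mathbb{T}^n)\to H^{t-s}(\mathbb{T}^n)$ for every $t\in\mathbb{R}$ and a continuous automorphism of $\mathscr{D}'(\mathbb{T}^n)$; second, since on the Fourier side both $p(D)$ and $\Lambda^s$ act by pointwise multiplication of the coefficients, one has $p(D)\Lambda^s u=\Lambda^s p(D)u$ for all $u\in\mathscr{D}'(\mathbb{T}^n)$. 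Then, given an arbitrary $k\in\mathbb{R}$ and $u\in\mathscr{D}'(\mathbb{T}^n)$ with $p(D)u\in H^k(\mathbb{T}^n)$, I would set $v\coloneqq\Lambda^{\,k-k_0}u\in\mathscr{D}'(\mathbb{T}^n)$ and observe that $p(D)v=\Lambda^{\,k-k_0}\bigl(p(D)u\bigr)\in H^{k_0}(\mathbb{T}^n)$. The hypothesis at level $k_0$ then gives $v\in H^{\,k_0+m-r}(\mathbb{T}^n)$, and applying $\Lambda^{\,k_0-k}$ yields $u=\Lambda^{\,k_0-k}v\in H^{\,k+m-r}(\mathbb{T}^n)$. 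Since $k$ was arbitrary, $p(D)$ is GH-$r$, and the proof is complete.

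I do not expect a genuine obstacle here; the only point requiring care is that the argument really uses the absence of $x$-dependence in $p$, so that $p(D)$ commutes \emph{exactly} with every $\Lambda^s$. For a general $x$-dependent pseudodifferential operator this would break down, and one would instead argue through the symbol characterization of Theorem \ref{theoGH}: if condition {\it (i)} or {\it (ii)} there fails, re-run the two counterexample constructions from its proof (infinitely many zeros of $p$; a sequence of distinct $\xi_j\in\mathbb{Z}^n\setminus\{0\}$ with $|\xi_j|\to\infty$ and $|p(\xi_j)|\,|\xi_j|^{r-m}\to 0$), but this time take the test distribution with $\widehat{u}(\xi_j)=(1+\|\xi_j\|^2)^{-(k_0+m-r)/2}$, so that $u\notin H^{\,k_0+m-r}(\mathbb{T}^n)$ while $p(D)u\in H^{k_0}(\mathbb{T}^n)$, contradicting the single-level hypothesis. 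I would mention this alternative route in passing but present the commutation argument as the main line of proof.
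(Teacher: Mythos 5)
Your proposal is correct and follows essentially the same route as the paper: the converse is proved by conjugating with the Bessel-potential Fourier multipliers $\Lambda^s$, using that they commute exactly with $p(D)$ and act as isomorphisms between Sobolev spaces to transport the single-level implication to every level (your sign convention for $\Lambda^s$ differs from the paper's, but the argument is identical). The alternative symbol-based route you sketch is a fine remark but is not needed, just as in the paper.
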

    
    \begin{proof}
    	First, assume \( p(D) \) is GH-\( r \). By definition, there exists \( k \in \mathbb{R} \) satisfying the stated implication.
    	
    	Conversely, suppose there exists a fixed \( k \in \mathbb{R} \) such that for every \( u \in \mathscr{D}'(\mathbb{T}^n) \), the condition \( p(D)u \in H^k(\mathbb{T}^n) \) implies \( u \in H^{k + m - r}(\mathbb{T}^n) \). We now show that this property extends to all Sobolev orders.
    	
    	Consider the Bessel potential of order \( s \in \mathbb{R} \):
    	\[
    	\Lambda^s \coloneqq (I - \Delta)^{-s/2}: H^k(\mathbb{T}^n) \to H^{k+s}(\mathbb{T}^n),
    	\]
    	which is a Fourier multiplier with symbol \( (1 + \|\xi\|^2)^{-s/2} \). The operator \( \Lambda^s \) is a bijection between \( H^k(\mathbb{T}^n) \) and \( H^{k+s}(\mathbb{T}^n) \) for any \( k \in \mathbb{R} \).
    	
    	Let \( u \in \mathscr{D}'(\mathbb{T}^n) \) satisfy \( p(D)u = f \in H^{k'}(\mathbb{T}^n) \) for some arbitrary \( k' \in \mathbb{R} \). Write \( k = k' + s \) for some \( s \in \mathbb{R} \). Since Fourier multipliers commute, we have:
    	\[
    	p(D)\circ \Lambda^s u = \Lambda^s \circ p(D)u = \Lambda^s f \in H^k(\mathbb{T}^n).
    	\]
    	
    	By assumption, \( \Lambda^s u \in H^{k + m - r}(\mathbb{T}^n) \), which implies \[ u \in H^{k + m - r - s}(\mathbb{T}^n) = H^{k' + m - r}(\mathbb{T}^n). \]
    	
    	Thus, the property holds for all \( k' \in \mathbb{R} \), proving that \( p(D) \) is GH-\( r \).
    \end{proof}
    
    \begin{corollary}\label{coro_ind_gr_char}
    	Let \( p \in S^{[m]}(\mathbb{Z}^n) \). Then \( \operatorname{ind}_{GH}(p(D)) = r \) if and only if \( r \) is the infimum of all \( s \geq 0 \) such that there exists \( k \in \mathbb{R} \) satisfying the following implication for all \( u \in \mathscr{D}'(\mathbb{T}^n) \):
    	\[
    	p(D)u \in H^k(\mathbb{T}^n) \implies u \in H^{k + m - s}(\mathbb{T}^n).
    	\]
    \end{corollary}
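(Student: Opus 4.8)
The plan is to deduce this corollary directly by combining the definition of $\operatorname{ind}_{GH}(p(D))$ with Proposition~\ref{prop_single_sobolev_order}, which has already reduced the GH-$r$ property to the verification of a single Sobolev order.

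First I would recall that, by definition, $\operatorname{ind}_{GH}(p(D)) = \inf\{s \geq 0 : p(D) \text{ is GH-}s\}$, with the convention that this infimum equals $\infty$ when no such $s$ exists. Next, I would apply Proposition~\ref{prop_single_sobolev_order} with loss parameter $s$ in place of $r$: for each fixed $s \geq 0$, the operator $p(D)$ is GH-$s$ if and only if there exists some $k \in \mathbb{R}$ such that $p(D)u \in H^k(\mathbb{T}^n)$ implies $u \in H^{k+m-s}(\mathbb{T}^n)$ for every $u \in \mathscr{D}'(\mathbb{T}^n)$. Consequently the two subsets of $[0,\infty)$ whose infima appear in the statement, namely $\{s \geq 0 : p(D) \text{ is GH-}s\}$ and $\{s \geq 0 : \exists\, k \in \mathbb{R} \text{ with the displayed implication}\}$, coincide as sets.

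Since these sets are equal, their infima are equal, so $\operatorname{ind}_{GH}(p(D))$ equals the infimum described on the right-hand side of the corollary; in particular $\operatorname{ind}_{GH}(p(D)) = r$ if and only if $r$ is that infimum. The same argument covers the degenerate case: if neither set contains any element, both infima are $\infty$ by convention, which is also consistent with Corollary~\ref{coro_GH_equiv}.

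I do not anticipate a genuine obstacle here, since the entire content of the corollary is carried by Proposition~\ref{prop_single_sobolev_order}. The only points requiring minor care are retaining the nonnegativity constraint $s \geq 0$ on both sides of the set identity and handling the infimum-equals-$\infty$ convention consistently.
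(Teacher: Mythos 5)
Your proof is correct and matches the paper's intent: the paper states this corollary without proof, treating it exactly as you do — an immediate consequence of applying Proposition~\ref{prop_single_sobolev_order} to each loss parameter $s$ and taking infima of the resulting identical sets. Your attention to the $\infty$ convention is a fine touch but raises no issues.
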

	
	%=================================
	%=================================
	\section{Solvability with Loss of Derivatives}
	%=================================
	%=================================
	
	We now introduce a notion of solvability in lower regularity spaces, intrinsically linked to the global hypoellipticity with loss of derivatives analyzed in the previous section.
	
	\begin{definition}\label{def_GS}
		Let \( p \in S^{[m]}(\mathbb{Z}^n) \) and \( r \geq 0 \). The operator \( p(D) \) is \textit{globally solvable with a loss of \( r \) derivatives} (denoted GS-\( r \)) if, for every \( k \in \mathbb{R} \) and every \( f \in H^k(\mathbb{T}^n) \cap \left( \ker {}^t p(D) \cap C^\infty(\mathbb{T}^n) \right)^0 \), there exists \( u \in H^{k + m - r}(\mathbb{T}^n) \) such that \( p(D)u = f \).
		
		Additionally, the \textit{global solvability index} of \( p(D) \) is defined as
		\[
		\operatorname{ind}_{GS}(p(D)) \vcentcolon= \inf \left\{ r \geq 0 : p(D) \text{ is GS-}r \right\},
		\]
		with the convention that \( \operatorname{ind}_{GS}(p(D)) = \infty \) if \( p(D) \) is not GS-\( r \) for any \( r \geq 0 \).
	\end{definition}

	\begin{remark}
		As in the case of global hypoellipticity with loss of derivatives, the definition of the property of \( p(D) \) being GS-\( r \) may also depend on the choice of the operator's order. This justifies the need to specify the intrinsic order in the definition.
		
		A direct consequence of this definition and the Sobolev embedding theorem, we observe that if \( p(D) \) is GS-\( r \), then for any \( k \in \mathbb{N}_0 \) satisfying \( k >r+\frac{n}{2}-m \), every admissible function \( f \in C^k(\mathbb{T}^n) \) admits a solution \( u \in C^{k'}(\mathbb{T}^n) \) to the equation \( p(D)u = f \),  where $k'$ denotes the greatest integer {\bf less than} $k+m-r-\frac{n}{2}$.
	\end{remark}
	
	The next result provides a characterization of the annihilator of the kernel of the transpose operator in terms of Fourier coefficients. This characterization will play a key role in the subsequent results.
	
	\begin{lemma}\label{equivalence_ker0}
		Let \( p \in S^{m}(\mathbb{Z}^n) \) and \( f \in H^k(\mathbb{T}^n) \). Then  
		\[
		f \in (\ker \pt \cap C^\infty(\mathbb{T}^n))^0  \Longleftrightarrow  \widehat{f}(\xi) = 0,  \forall \xi \in \mathbb{Z}^n \text{ such that } p(\xi) = 0.
		\]
	\end{lemma}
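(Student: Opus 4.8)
The plan is to reduce everything to the explicit action of $\pt$ on Fourier coefficients. First I would record that, since $p(D)$ is the Fourier multiplier with symbol $p(\xi)$, its transpose with respect to the natural bilinear duality $\langle u,\phi\rangle=\sum_{\xi\in\Z^n}\widehat u(\xi)\,\widehat\phi(-\xi)$ between $\mathscr{D}'(\T^n)$ and $C^\infty(\T^n)$ is again a Fourier multiplier, with $\widehat{\pt g}(\xi)=p(-\xi)\,\widehat g(\xi)$ for every $\xi\in\Z^n$; in particular $\pt\bigl(e^{-i\xi_0\cdot x}\bigr)=p(\xi_0)\,e^{-i\xi_0\cdot x}$, so $e^{-i\xi_0\cdot x}\in\ker\pt\cap C^\infty(\T^n)$ whenever $p(\xi_0)=0$. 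Conversely, any $g\in C^\infty(\T^n)$ with $\pt g=0$ satisfies $\widehat g(\xi)=0$ for all $\xi$ with $p(-\xi)\neq0$, and since the Fourier series of a smooth function converges in $C^\infty(\T^n)$, such a $g$ is a limit in $C^\infty(\T^n)$ of finite linear combinations of the exponentials $e^{-i\xi_0\cdot x}$ with $p(\xi_0)=0$. Thus $\ker\pt\cap C^\infty(\T^n)$ is exactly the closed span, in the $C^\infty$ topology, of $\{\,e^{-i\xi_0\cdot x}:\xi_0\in\Z^n,\ p(\xi_0)=0\,\}$.

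Next I would use this description. Because $\phi\mapsto\langle f,\phi\rangle$ is continuous on $C^\infty(\T^n)$ for any distribution $f$, such an $f$ (in particular any $f\in H^k(\T^n)$) annihilates $\ker\pt\cap C^\infty(\T^n)$ if and only if it annihilates each generator, i.e. $\langle f,e^{-i\xi_0\cdot x}\rangle=0$ for every $\xi_0$ with $p(\xi_0)=0$. A one-line computation with the duality formula above gives $\langle f,e^{-i\xi_0\cdot x}\rangle=\widehat f(\xi_0)$, so the annihilation condition reads precisely $\widehat f(\xi_0)=0$ for all $\xi_0\in\Z^n$ with $p(\xi_0)=0$, which is the claimed equivalence. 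When $f\in H^k(\T^n)$, the series $\langle f,\phi\rangle=\sum_\xi\widehat f(\xi)\widehat\phi(-\xi)$ converges absolutely for $\phi\in C^\infty(\T^n)$ by Cauchy--Schwarz and the rapid decay of $\widehat\phi$, so all manipulations are legitimate.

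I do not expect a genuine obstacle here; the argument is elementary once the transpose is made explicit. The single point that needs care is the sign bookkeeping: the transpose reflects the frequency variable of the symbol, so the exponential whose pairing with $f$ isolates the coefficient $\widehat f(\xi_0)$ must carry frequency $-\xi_0$, and it is exactly this exponential that lies in $\ker\pt$ precisely when $p(\xi_0)=0$ --- getting this sign right is what makes the set $\{\xi_0:p(\xi_0)=0\}$, rather than $\{\xi_0:p(-\xi_0)=0\}$, appear in the conclusion. One should also note that the generators above are genuinely dense in $\ker\pt\cap C^\infty(\T^n)$, which is immediate from the convergence of Fourier series of smooth functions in $C^\infty(\T^n)$.
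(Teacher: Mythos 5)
Your proof is correct and follows essentially the same route as the paper: identify the transpose symbol as $p(-\xi)$, observe that the exponentials $e^{-i\xi_0\cdot x}$ with $p(\xi_0)=0$ lie in $\ker\pt\cap C^\infty(\mathbb{T}^n)$, and read the equivalence off the Fourier pairing $\langle f,\phi\rangle=(2\pi)^n\sum_{\xi}\widehat f(\xi)\widehat\phi(-\xi)$. The only cosmetic difference is in the converse direction, where you reduce to the exponential generators via density of Fourier series in $C^\infty(\mathbb{T}^n)$ and continuity of $f$, whereas the paper pairs $f$ directly with an arbitrary $\phi$ in the kernel and notes that every term $\widehat f(\xi)\widehat\phi(-\xi)$ vanishes; both rest on the same support observation for $\widehat\phi$.
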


	\begin{proof}
		First, recall that the transpose operator \( \pt \) is also a Fourier multiplier of the same order as \( p(D) \), with symbol given by  
		\[
		\prescript{t}{}{p}(\xi) = p(-\xi), \quad \xi \in \mathbb{Z}^n,
		\]
		as shown in \cite[Theorem 2.3]{FerraPetronilho2021}.
		
		To prove the ``only if" implication, suppose that \( p(\xi_0) = 0 \) for some \( \xi_0 \in \mathbb{Z}^n \). Consider the smooth function \( \phi \) defined by its Fourier coefficients as follows:
		\[
		\widehat{\phi}(\xi) =
		\begin{cases}
			1, & \text{if } \xi = -\xi_0, \\
			0, & \text{otherwise}.
		\end{cases}
		\]
		
		Then, by applying \( \pt \) to \( \phi \), we obtain  
		\[
		\pt \phi(x) = \sum_{\xi \in \mathbb{Z}^n} e^{i x \cdot \xi} p(-\xi) \widehat{\phi}(\xi) = e^{-i x \cdot \xi_0} p(\xi_0) \widehat{\phi}(-\xi_0) = 0.
		\]  
		This shows that \( \phi \in \ker \pt \cap C^\infty(\mathbb{T}^n) \).  
		
		Now, if \( f \in (\ker \pt \cap C^\infty(\mathbb{T}^n))^0 \), then by definition,
		\[
		\langle f, \phi \rangle = 0.
		\]
		
		Using the Fourier representation, we obtain  
		\[
		(2\pi)^{n} \sum_{\xi \in \mathbb{Z}^n} \widehat{f}(\xi) \widehat{\phi}(-\xi) = (2\pi)^{n} \widehat{f}(\xi_0) = 0.
		\]
		
		Thus, \( \widehat{f}(\xi_0) = 0 \). Since \( \xi_0 \) was arbitrary, we conclude that \( \widehat{f}(\xi) = 0 \) for all \( \xi \in \mathbb{Z}^n \) such that \( p(\xi) = 0 \).
		
		\medskip
		Conversely, suppose that \( f \) satisfies \( \widehat{f}(\xi) = 0 \) whenever \( p(\xi) = 0 \). Consider any \( \phi \in \ker \pt \cap C^\infty(\mathbb{T}^n) \). Notice that it satisfies 
		\[
		 0=\widehat{\pt \phi}(-\xi) = p(\xi) \widehat{\phi}(-\xi), \ \xi \in \mathbb{Z}^n.
		\]  
		
		Therefore \( \widehat{\phi}(-\xi) = 0 \) whenever \( p(\xi) \neq 0 \). We conclude that  
		\begin{align*}
			\langle f, \phi \rangle &= (2\pi)^n \sum_{\xi \in \mathbb{Z}^n} \widehat{f}(\xi) \widehat{\phi}(-\xi) = 0,
		\end{align*}
		since every term in the sum above is equal to zero.\\ \noindent Therefore, \( f \in (\ker \pt \cap\, C^\infty(\mathbb{T}^n))^0 \), which completes the proof.
	\end{proof}
	
	\begin{theorem}\label{theoGS}
		Let \( r \geq 0 \) and \( p \in S^{[m]}(\mathbb{Z}^n) \). The operator \( p(D) \) is GS-$r$ if and only if there exists a constant \( K > 0 \) such that
		\begin{equation}\label{ineq-GS}
			|p(\xi)|\geq K|\xi|^{m-r},
		\end{equation}
		for every \( \xi \in \mathbb{Z}^n \setminus \{0\} \) such that \( p(\xi) \neq 0 \).
	\end{theorem}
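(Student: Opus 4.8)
The plan is to follow the architecture of the proof of Theorem~\ref{theoGH}, with Lemma~\ref{equivalence_ker0} now playing the bookkeeping role that the ``finitely many zeros'' hypothesis played there. The key observation is that an admissible datum, i.e.\ $f \in (\ker \pt \cap C^\infty(\mathbb{T}^n))^0$, necessarily satisfies $\widehat f(\xi) = 0$ at every zero $\xi$ of the symbol; hence the zero set of $p$ never obstructs solvability, which is exactly why no finiteness condition appears in the statement.

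For sufficiency, assume \eqref{ineq-GS} holds with constant $K>0$, fix $k \in \mathbb{R}$ and $f \in H^k(\mathbb{T}^n) \cap (\ker \pt \cap C^\infty(\mathbb{T}^n))^0$. By Lemma~\ref{equivalence_ker0}, $\widehat f(\xi) = 0$ whenever $p(\xi)=0$, so I define $u$ by $\widehat u(\xi) = \widehat f(\xi)/p(\xi)$ when $p(\xi) \neq 0$ and $\widehat u(\xi) = 0$ otherwise; then $\widehat{p(D)u}(\xi) = p(\xi)\widehat u(\xi)$ equals $\widehat f(\xi)$ in both cases, hence $p(D)u = f$. Isolating the single term $\xi = 0$ (whose contribution is some finite $C_0$) and applying \eqref{ineq-GS} together with the elementary bound $|\xi|^{-2\tau} \leq (1+n)^{|\tau|}(1+\|\xi\|^2)^{-\tau}$ (with $\tau = m-r$) to the remaining frequencies gives, exactly as in the final part of the proof of Theorem~\ref{theoGH},
\[
\|u\|_{H^{k+m-r}(\mathbb{T}^n)}^2 \;\leq\; C_0 + \frac{(1+n)^{|m-r|}}{K^2}\,\|f\|_{H^k(\mathbb{T}^n)}^2 \;<\; \infty ,
\]
so $u \in H^{k+m-r}(\mathbb{T}^n)$ and $p(D)$ is GS-$r$.

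For necessity I argue by contraposition. If \eqref{ineq-GS} fails for every $K>0$, then the infimum of $|p(\xi)|/|\xi|^{m-r}$ over $\xi \in \mathbb{Z}^n\setminus\{0\}$ with $p(\xi)\neq 0$ is $0$, so (by the same routine extraction used in Corollary~\ref{coroGH}) there is a sequence $(\xi_j)_{j \in \N}$ of distinct points of $\mathbb{Z}^n \setminus \{0\}$ with $0 < |p(\xi_j)| \leq j^{-1}|\xi_j|^{m-r}$ for all $j$. Put $\widehat f(\xi_j) = 1/j$ and $\widehat f(\xi) = 0$ for every other $\xi$. Since $\widehat f$ is supported where $p$ does not vanish, Lemma~\ref{equivalence_ker0} gives $f \in (\ker \pt \cap C^\infty(\mathbb{T}^n))^0$, and $\|f\|_{H^0(\mathbb{T}^n)}^2 = \sum_{j} j^{-2} < \infty$ gives $f \in H^0(\mathbb{T}^n) = L^2(\mathbb{T}^n)$. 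Any $u \in \mathscr{D}'(\mathbb{T}^n)$ solving $p(D)u = f$ is forced to have $\widehat u(\xi_j) = \widehat f(\xi_j)/p(\xi_j)$, so
\begin{align*}
\|u\|_{H^{m-r}(\mathbb{T}^n)}^2
&\geq \sum_{j \in \N} (1+\|\xi_j\|^2)^{m-r}\,\frac{|\widehat f(\xi_j)|^2}{|p(\xi_j)|^2} \\
&\geq (1+n)^{-|m-r|} \sum_{j \in \N} j^{2}\,|\widehat f(\xi_j)|^2 = (1+n)^{-|m-r|} \sum_{j \in \N} 1 = \infty ,
\end{align*}
regardless of the (free) values of $\widehat u$ at the zeros of $p$. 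Thus $p(D)u = f$ has no solution in $H^{m-r}(\mathbb{T}^n)$; since the GS-$r$ property requires solvability at every Sobolev order, $p(D)$ is not GS-$r$.

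I expect the only genuinely delicate point — the analogue of the ``main obstacle'' — to be the bookkeeping of which Fourier coefficients of a candidate solution are pinned down by the equation (those at frequencies with $p(\xi) \neq 0$) and which remain free (those on the zero set of $p$): one must check that the divergence in the necessity argument is carried entirely by the pinned-down coefficients, and dually that the choice $\widehat u = 0$ on the zero set is legitimate in the sufficiency argument — both handled by Lemma~\ref{equivalence_ker0}. With this theorem in hand, the corresponding global solvability index statements would follow exactly as Corollary~\ref{coroGH} and Theorem~\ref{theoGHindex} follow from Theorem~\ref{theoGH}.
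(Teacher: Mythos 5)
Your proposal is correct and follows essentially the same route as the paper: sufficiency by dividing $\widehat f(\xi)$ by $p(\xi)$ on the non-vanishing frequencies (legitimized by Lemma~\ref{equivalence_ker0}) with the same $(1+n)^{|m-r|}$ estimate, and necessity by building an admissible datum supported on a sequence where $|p(\xi_j)|\leq j^{-1}|\xi_j|^{m-r}$ and showing the forced Fourier coefficients of any solution make its Sobolev norm diverge. The only cosmetic difference is the normalization of the counterexample (you take $\widehat f(\xi_j)=1/j$ with $k=0$, the paper takes $\widehat f(\xi_j)=p(\xi_j)$ with $k=-m+r$), which amounts to the same construction shifted in the Sobolev scale.
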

	
	\begin{proof}
		Suppose that inequality \eqref{ineq-GS} does not hold. Then, there exists a sequence of distinct elements \( (\xi_j)_{j \in \mathbb{N}} \) in \( \mathbb{Z}^n \setminus \{0\} \) such that
		\begin{equation}\label{ineq-GS-not}
			0 < |p(\xi_j)|\leq  \frac{1}{j}|\xi_j|^{m-r}, \quad  \forall j \in \mathbb{N}.
		\end{equation}	
		Define a distribution \( f \in \mathscr{D}'(\mathbb{T}^n) \) by its Fourier coefficients:
\[
		\widehat{f}(\xi) = 
		\begin{cases}
			p(\xi_j), & \text{if } \xi = \xi_j \text{ for some } j \in \mathbb{N}, \\
			0, & \text{otherwise}.
		\end{cases}
		\]
            Note that
		\begin{align*}
			\|f\|_{H^{-m + r}(\mathbb{T}^n)}^2 
			&= \sum_{\xi \in \mathbb{Z}^n} (1 + \|\xi\|^2)^{-m + r} |\widehat{f}(\xi)|^2 \\
			&= \sum_{j=1}^\infty (1 + \|\xi_j\|^2)^{-m + r} |p(\xi_j)|^2 \\
			&\leq \sum_{j=1}^\infty (1 + \|\xi_j\|^2)^{-m + r} \cdot \frac{1}{j^2} |\xi_j|^{2(m - r)} \\
			&\leq (1 + n)^{|m - r|} \sum_{j=1}^\infty \frac{1}{j^2} < \infty.
		\end{align*}
		where, in the last inequality, we used the bound  
	\begin{equation}\label{good-bound}
			|\xi|^{-2\tau} \leq (1+n)^{|\tau|} (1+\|\xi\|^2)^{-\tau}, \quad \forall \xi \in \mathbb{Z}^n \setminus \{0\}, \quad \forall \tau \in \mathbb{R}.
	\end{equation}
		Thus, \( f \in H^{-m + r}(\mathbb{T}^n) \). Moreover, since \( \widehat{f}(\xi) = 0 \) whenever \( p(\xi) = 0 \), Lemma \ref{equivalence_ker0} ensures that \( f \in (\ker {}^t p(D) \cap C^\infty(\mathbb{T}^n))^0 \).

        However, if $p(D)u=f$, for some $u\in\mathscr{D}'(\T^n)$, the its Fourier coefficients must satisfy
		\[
		\widehat{u}(\xi) = 
		\begin{cases}
			1, & \text{if } \xi = \xi_j \text{ for some } j \in \mathbb{N}, \\
			0, & \text{otherwise}.
		\end{cases}
		\]
		But then, since
		\[
		\|u\|_{H^0(\mathbb{T}^n)}^2 = \sum_{j=1}^\infty |\widehat{u}(\xi_j)|^2 = \sum_{j=1}^\infty 1 = \infty,
		\]
		we conclude that \( u \notin H^0(\mathbb{T}^n) \).
			
		This confirms that for the given \( f \in H^{-m + r}(\mathbb{T}^n) \cap \left( \ker {}^t p(D) \cap C^\infty \right)^0 \), there exists no solution \( u \in H^{0}(\mathbb{T}^n) \) to the equation \( p(D)u = f \). Hence, the operator \( p(D) \) fails to be GS-\( r \).

		\medskip		
		Conversely, assume that inequality \eqref{ineq-GS} holds and let us show that \( p(D) \) is GS-\( r \). 
		
		Let \( f \in H^{k}(\mathbb{T}^n) \cap \left( \ker {}^t p(D) \cap C^\infty(\mathbb{T}^n) \right)^0 \) and define \( u \in \mathscr{D}'(\mathbb{T}^n) \) by its Fourier coefficients:
		\[
		\widehat{u}(\xi) = 
		\begin{cases}
			\dfrac{\widehat{f}(\xi)}{p(\xi)}, & \text{if } p(\xi) \neq 0, \\
			0, & \text{if } p(\xi) = 0.
		\end{cases}
		\]
		
		By Lemma \ref{equivalence_ker0} and by comparing Fourier coefficients, we have that \( p(D)u = f \). Moreover, 
		\begin{align*}
			\|u\|_{H^{k + m - r}(\mathbb{T}^n)}^2 
			&= \sum_{\substack{\xi \in \mathbb{Z}^n \\ p(\xi) \neq 0}} (1 + \|\xi\|^2)^{k + m - r} \left| \frac{\widehat{f}(\xi)}{p(\xi)} \right|^2 \\
			&\leq \sum_{\substack{\xi \in \mathbb{Z}^n \\ p(\xi) \neq 0}} (1 + \|\xi\|^2)^{k + m - r} \cdot \frac{|\widehat{f}(\xi)|^2}{K^2 |\xi|^{2(m - r)}} \quad \text{(by \eqref{ineq-GS})} \\
			& \leq \frac{(1 + n)^{|m - r|}}{K^2} \sum_{\xi \in \mathbb{Z}^n} (1 + \|\xi\|^2)^{k} |\widehat{f}(\xi)|^2  \quad \text{(by \eqref{good-bound})} \\
			&= \frac{(1 + n)^{|m - r|}}{K^2} \|f\|_{H^k(\mathbb{T}^n)}^2 < \infty.
		\end{align*}
		
		Thus, \( u \in H^{k + m - r}(\mathbb{T}^n) \), proving that \( p(D) \) is GS-\( r \). 
	\end{proof}
      
      \
      
   \begin{corollary}\label{coroGHimpliesGS}
   	Let \(r\geq0\) and \( p \in S^{[m]}(\mathbb{Z}^n) \). The following properties hold:
   	\begin{enumerate}
   		\item[{\it (i)}] If \( p(D) \) is GH-\( r \), then it is GS-\( r \).
   		\item[{\it (ii)}] \smallskip If \( p(\xi) \) vanishes for only finitely many \( \xi \in \mathbb{Z}^n \), then 
   		\[ p(D)  \text{ is GH-}r \Longleftrightarrow   p(D)  \text{ is GS-}r. \]
   		\item[{\it (iii)}] If \( \operatorname{ind}_{GH}(p(D)) < \infty \), then 
   		\[ \operatorname{ind}_{GS}(p(D)) = \operatorname{ind}_{GH}(p(D)). \]
   	\end{enumerate}
   \end{corollary}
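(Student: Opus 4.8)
The plan is to obtain all three parts as immediate consequences of the symbol-level characterizations already established, namely Theorem \ref{theoGH} for GH-\(r\) and Theorem \ref{theoGS} for GS-\(r\). These two theorems reduce the respective properties to arithmetic conditions on \(p(\xi)\) that differ \emph{only} in that GH-\(r\) additionally requires \(p\) to vanish at finitely many \(\xi\in\mathbb{Z}^n\); everything in the corollary is the bookkeeping of that single asymmetry. For part (i), if \(p(D)\) is GH-\(r\), then Theorem \ref{theoGH} gives that \(p\) vanishes at only finitely many \(\xi\) and that there is \(K>0\) with \(|p(\xi)|\geq K|\xi|^{m-r}\) for every \(\xi\in\mathbb{Z}^n\setminus\{0\}\) with \(p(\xi)\neq 0\); the latter inequality is precisely \eqref{ineq-GS}, so Theorem \ref{theoGS} yields GS-\(r\). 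For part (ii), one direction is part (i); for the converse, under the standing assumption that \(p\) vanishes at finitely many \(\xi\), a GS-\(r\) operator supplies via Theorem \ref{theoGS} the bound \(|p(\xi)|\geq K|\xi|^{m-r}\), and together with the finiteness hypothesis both conditions of Theorem \ref{theoGH} are met, so \(p(D)\) is GH-\(r\).

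For part (iii), I would first record that \(\operatorname{ind}_{GH}(p(D))<\infty\) forces the zero set of \(p\) to be finite: by definition of the index (or equivalently by Corollary \ref{coro_GH_equiv}) the hypothesis means \(p(D)\) is GH-\(r_0\) for some \(r_0\geq 0\), and Theorem \ref{theoGH} then gives that \(p(\xi)=0\) for only finitely many \(\xi\). Consequently the hypothesis of part (ii) holds, so GH-\(r\) \(\Longleftrightarrow\) GS-\(r\) for every \(r\geq 0\); the sets \(\{r\geq 0:\ p(D)\text{ is GH-}r\}\) and \(\{r\geq 0:\ p(D)\text{ is GS-}r\}\) coincide, and hence so do their infima, i.e.\ \(\operatorname{ind}_{GS}(p(D))=\operatorname{ind}_{GH}(p(D))\).

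I do not anticipate a genuine obstacle here: the whole argument is a short deduction once Theorems \ref{theoGH} and \ref{theoGS} are in hand. The only point that deserves a line of care is the observation in part (iii) that a finite global hypoellipticity index already entails a finite zero set — this is where the mismatch between the two characterizing theorems is absorbed, and without it part (iii) would not follow from part (ii). Should one prefer a proof not routed through the symbol characterizations, the fallback would be to reuse the explicit distributional counterexamples built in the proofs of those two theorems, but invoking the theorems directly is considerably cleaner and is the route I would take.
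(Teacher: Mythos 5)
Your proposal is correct and follows essentially the same route as the paper: parts (i) and (ii) are read off directly from the symbol characterizations in Theorems \ref{theoGH} and \ref{theoGS}, and part (iii) uses Corollary \ref{coro_GH_equiv} (finite index gives GH-$r_0$ for some $r_0$, hence a finite zero set via Theorem \ref{theoGH}) so that part (ii) applies and the two indices coincide. Your explicit remark that the finite-zero-set observation is the point absorbing the asymmetry between the two theorems is exactly the (tersely stated) content of the paper's argument.
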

	
	\begin{proof}
		The first two statements follow directly from Theorems \ref{theoGH} and \ref{theoGS}.
		For {\it (iii)}, note that if \( \operatorname{ind}_{GH}(p(D)) < \infty \), then by Corollary \ref{coro_GH_equiv}, \( p(D) \) is GH-\( r \) for some finite \( r \). Statement {\it (ii)} then also holds  (via Theorem \ref{theoGH}), and we conclude that \( \operatorname{ind}_{GS}(p(D)) = \operatorname{ind}_{GH}(p(D)) \).
	\end{proof}

	\begin{example}
		As a consequence of Corollary \ref{coroGHimpliesGS}, every elliptic operator is GS-\( 0 \), and its global solvability index satisfies \( \operatorname{ind}_{GS}(p(D)) = 0 \).
		
		In Examples \ref{Laplacian_GH-0} and \ref{Heat_GH-1}, we established that the Laplacian is GH-\( 0 \), while the heat operator is GH-\( 1 \). Specifically:
		\[
		\operatorname{ind}_{GH}(\Delta_{\mathbb{T}^n}) = 0, \quad \operatorname{ind}_{GH}(\partial_{x_1} - \Delta_{\mathbb{T}^n}) = 1.
		\]
		
		By Corollary \ref{coroGHimpliesGS}, these operators also satisfy the corresponding global solvability properties. Thus:
		\[
		\operatorname{ind}_{GS}(\Delta_{\mathbb{T}^n}) = 0, \quad \operatorname{ind}_{GS}(\partial_{x_1} - \Delta_{\mathbb{T}^n}) = 1.
		\]

        In particular, for any $k>\frac{n+1}{2}-1$ and $f\in C^k(\T^{n+1})$, the heat equation $\partial_{x_1}u-\Delta_{\T^n}u=f$, admits solution $u\in C^{k-\lceil\frac{n}{2}\rceil}(\T^{n+1})$, if $\widehat{f}(0)=0$.  
        
		The wave operator \( p(D) = \partial_{x_1}^2 - \eta^2 \Delta_{\mathbb{T}^n} \), however, behaves differently. As shown in Example \ref{Wave_GH-infty}, \(\operatorname{ind}_{GH}(p(D)) = \infty\) and so Corollary \ref{coroGHimpliesGS} is not enough to characterize its global solvability properties.
		
		We will revisit the wave operator in the final section to provide further insights into its behavior.
	\end{example}

	\begin{corollary}\label{coroGS1}
		Let \( p \in S^{[m]}(\mathbb{Z}^n) \) and \( r' > 0 \). If there exists a constant \( K' > 0 \) such that
		\begin{equation} \label{ineq-not-GS-coro}
			0 < |p(\xi)| \leq K' |\xi|^{m - r'}
		\end{equation}
		for infinitely many \( \xi \in \mathbb{Z}^n \setminus \{0\} \), then \( p(D) \) is not GS-\( r \) for any \( 0\leq r < r' \). Consequently,
		\[
			\operatorname{ind}_{GS}(p(D)) \geq r'.
		\]
	\end{corollary}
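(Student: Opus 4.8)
The plan is to reduce this directly to Theorem \ref{theoGS}, mirroring the argument of Corollary \ref{coroGH}. Fix any $r$ with $0 \le r < r'$ and write $r' = r + \varepsilon$ for some $\varepsilon > 0$. Starting from the hypothesis \eqref{ineq-not-GS-coro}, I would rewrite the bound as
\[
0 < |p(\xi)| \le K' |\xi|^{m-r} |\xi|^{-\varepsilon}
\]
for infinitely many $\xi \in \mathbb{Z}^n \setminus \{0\}$. Since every ball in $\mathbb{Z}^n$ contains only finitely many lattice points, any infinite such set must contain elements of arbitrarily large norm; hence I can extract a sequence of distinct points $(\xi_j)_{j\in\mathbb{N}}$ satisfying the above inequality together with $|\xi_j| \ge (K' j)^{1/\varepsilon}$ for every $j$.

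With that choice, $K' |\xi_j|^{-\varepsilon} \le 1/j$, so the sequence obeys
\[
0 < |p(\xi_j)| \le \frac{1}{j} |\xi_j|^{m-r}, \qquad j \in \mathbb{N}.
\]
This is precisely the negation of inequality \eqref{ineq-GS} in Theorem \ref{theoGS} with loss parameter $r$: no constant $K>0$ can satisfy $|p(\xi)| \ge K|\xi|^{m-r}$ along $(\xi_j)$. By the (forward direction of the) equivalence in Theorem \ref{theoGS}, it follows that $p(D)$ is not GS-$r$. Since $r \in [0, r')$ was arbitrary, $p(D)$ is not GS-$r$ for any such $r$, and therefore $\operatorname{ind}_{GS}(p(D)) = \inf\{s \ge 0 : p(D) \text{ is GS-}s\} \ge r'$, which is the claim.

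I do not anticipate a genuine obstacle here: the argument is a routine adaptation of Corollary \ref{coroGH}, the only point requiring a word of care being the extraction of the subsequence with $|\xi_j|$ growing fast enough, which is immediate from the local finiteness of $\mathbb{Z}^n$. Optionally, one could instead invoke the contrapositive of Theorem \ref{theoGS} directly by noting that GS-$r$ would force \eqref{ineq-GS}, contradicting \eqref{ineq-not-GS-coro} already for $r < r'$, but the explicit-sequence version keeps the exposition parallel to the hypoellipticity case.
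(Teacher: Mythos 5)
Your proposal is correct and follows essentially the same route as the paper: write $r' = r + \varepsilon$, extract a sequence with $|\xi_j| \ge (K'j)^{1/\varepsilon}$ to get $0 < |p(\xi_j)| \le \frac{1}{j}|\xi_j|^{m-r}$, and conclude via Theorem \ref{theoGS} that $p(D)$ is not GS-$r$. The only cosmetic difference is that you invoke the statement of Theorem \ref{theoGS} (failure of the lower bound for every $K$), whereas the paper points back to the explicit counterexample construction in its proof; both are equivalent.
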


	\begin{proof}
		Assume \( p(\xi) \) satisfies \eqref{ineq-not-GS-coro} for infinitely many \( \xi \in \mathbb{Z}^n \setminus \{0\} \). Let \( 0\leq r < r' \), and write \( r' = r + \varepsilon \) for some \( \varepsilon > 0 \). Then, we can rewrite the inequality as:
		\[
		0 < |p(\xi)| \leq K' |\xi|^{m - r} |\xi|^{-\varepsilon},
		\]
		which holds for infinitely many \( \xi \).
		
		Among such infinitely many $\xi$, choose a sequence \( (\xi_j)_{j \in \mathbb{N}} \subset \mathbb{Z}^n \setminus \{0\} \) such that \( |\xi_j| \geq (K' j)^{1/\varepsilon} \). For these \( \xi_j \), we have:
		\[
		0<|p(\xi_j)| \leq K' |\xi_j|^{m - r} |\xi_j|^{-\varepsilon} \leq \frac{1}{j} |\xi_j|^{m - r}.
		\]
		
		As shown in the proof of Theorem \ref{theoGS}, this estimate implies that \( p(D) \) fails to satisfy the GS-\( r \) property. Therefore, \( p(D) \) is not GS-\( r \) for any \( 0\leq r < r' \), and it follows that:
		\[
		\operatorname{ind}_{GS}(p(D)) \geq r'.
		\]
		This completes the proof.
	\end{proof}

		\begin{theorem}\label{theoGSindex}
		Let \( r \geq 0 \) and \( p \in S^{[m]}(\mathbb{Z}^n) \). Then the global solvability index of \( p(D) \) satisfies \( \operatorname{ind}_{GS}(p(D)) = r \) if and only if the following conditions hold:
		\begin{enumerate}
			\item[{\it (i)}] For every \( \varepsilon > 0 \), there exists a constant \( K_\varepsilon > 0 \) such that
			\begin{equation}\label{ineqGS_lower}
				|p(\xi)| \geq K_\varepsilon |\xi|^{m - (r + \varepsilon)},
			\end{equation}
			for all \( \xi \in \mathbb{Z}^n \setminus \{0\} \) satisfying \( p(\xi) \neq 0 \).
			
			\item[{\it (ii)}] If \( r > 0 \), then for every \( \varepsilon > 0 \), there exists a constant \( K_{\varepsilon}' > 0 \) such that
			\begin{equation}\label{ineqGS_upper}
				0 < |p(\xi)| \leq K_{\varepsilon}' |\xi|^{m - (r - \varepsilon)},
			\end{equation}
			for infinitely many \( \xi \in \mathbb{Z}^n \setminus \{0\} \).
		\end{enumerate}
	\end{theorem}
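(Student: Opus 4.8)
The plan is to derive this characterization from Theorem~\ref{theoGS} and Corollary~\ref{coroGS1}, in close analogy with the proof of Theorem~\ref{theoGHindex}. The key simplification compared to the hypoelliptic case is that, by Theorem~\ref{theoGS}, global solvability with loss of derivatives is insensitive to the size of the zero set of $p$, so there is no need to track any ``finitely many zeros'' condition. I will also use the elementary monotonicity property that, since $|\xi|\ge 1$ for $\xi\in\mathbb{Z}^n\setminus\{0\}$, Theorem~\ref{theoGS} implies that if $p(D)$ is GS-$s$ then it is GS-$s'$ for every $s'\ge s$, and the companion fact that the inequality in condition {\it (ii)} becomes easier to satisfy as $\varepsilon$ increases.

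For sufficiency, assume {\it (i)} and {\it (ii)} hold. By {\it (i)} and Theorem~\ref{theoGS}, $p(D)$ is GS-$(r+\varepsilon)$ for every $\varepsilon>0$, so $\operatorname{ind}_{GS}(p(D))\le r$. If $r=0$ this already gives the equality. If $r>0$, then for each $\varepsilon\in(0,r)$ condition {\it (ii)} lets us apply Corollary~\ref{coroGS1} with $r'=r-\varepsilon$, yielding $\operatorname{ind}_{GS}(p(D))\ge r-\varepsilon$; letting $\varepsilon\to0^{+}$ gives $\operatorname{ind}_{GS}(p(D))\ge r$, hence $\operatorname{ind}_{GS}(p(D))=r$.

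For necessity, assume $\operatorname{ind}_{GS}(p(D))=r$ and argue by contradiction. If {\it (i)} fails, there is $\varepsilon_0>0$ such that no constant $K$ satisfies $|p(\xi)|\ge K|\xi|^{m-(r+\varepsilon_0)}$ for all $\xi\in\mathbb{Z}^n\setminus\{0\}$ with $p(\xi)\neq0$; by Theorem~\ref{theoGS}, $p(D)$ is not GS-$(r+\varepsilon_0)$, so by the monotonicity above it is not GS-$s$ for any $s\le r+\varepsilon_0$, forcing $\operatorname{ind}_{GS}(p(D))\ge r+\varepsilon_0>r$, a contradiction.

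Now suppose {\it (i)} holds but {\it (ii)} fails; then $r>0$, and since enlarging $\varepsilon$ only relaxes the inequality in {\it (ii)}, the failure occurs at some $\varepsilon_0\in(0,r)$, meaning that the set $\{\xi\in\mathbb{Z}^n\setminus\{0\}:0<|p(\xi)|\le|\xi|^{m-(r-\varepsilon_0)}\}$ is finite. Off this finite set one has $|p(\xi)|>|\xi|^{m-(r-\varepsilon_0)}$ whenever $p(\xi)\neq0$, while on it the ratio $|p(\xi)|/|\xi|^{m-(r-\varepsilon_0)}$ is bounded below by a positive constant; combining these produces $K>0$ with $|p(\xi)|\ge K|\xi|^{m-(r-\varepsilon_0)}$ for all $\xi\in\mathbb{Z}^n\setminus\{0\}$ with $p(\xi)\neq0$, so by Theorem~\ref{theoGS} the operator $p(D)$ is GS-$(r-\varepsilon_0)$, whence $\operatorname{ind}_{GS}(p(D))\le r-\varepsilon_0<r$, again a contradiction. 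This exhausts all cases. The only genuine bookkeeping point, and the mild ``main obstacle'', is extracting this uniform lower bound for $|p(\xi)|$ from the mere finiteness of the exceptional frequency set, which relies on the fact that a nonvanishing $|p|$ is bounded away from zero on any finite set.
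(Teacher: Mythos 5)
Your proposal is correct and follows essentially the same route as the paper: sufficiency via Theorem~\ref{theoGS} and Corollary~\ref{coroGS1}, and necessity by contradiction, including the same device of extracting a uniform constant \(K>0\) on the finite exceptional set to conclude GS-\((r-\varepsilon_0)\) when \emph{(ii)} fails. The only cosmetic differences are that, for the failure of \emph{(i)}, you use Theorem~\ref{theoGS} together with the monotonicity ``GS-\(s\) implies GS-\(s'\) for \(s'\ge s\)'' where the paper applies Corollary~\ref{coroGS1} directly, and you make explicit the reduction to \(\varepsilon_0\in(0,r)\), which the paper leaves implicit.
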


	\begin{proof}
		We begin by proving the sufficiency. Suppose that conditions {\it (i)} and {\it (ii)} hold. By Theorem \ref{theoGS}, condition {\it (i)} guarantees that \( p(D) \) is GS-\( (r + \varepsilon) \) for every \( \varepsilon > 0 \), implying that  
		\[
		\operatorname{ind}_{GS}(p(D)) \leq r.
		\]
		
		If \( r = 0 \), then we immediately conclude that \( \operatorname{ind}_{GS}(p(D)) = 0 \). Now, if \( r > 0 \), condition {\it (ii)} together with Corollary \ref{coroGS1} ensures that \( p(D) \) is not GS-\( s \) for any \( s < r \), leading to  
		\[
		\operatorname{ind}_{GS}(p(D)) \geq r.
		\]
		Thus, we conclude that \( \operatorname{ind}_{GS}(p(D)) = r \).
		
		\medskip
		Next, we prove the necessity. Suppose that \( \operatorname{ind}_{GS}(p(D)) = r \). If condition {\it (i)} fails, then there exists some \( \varepsilon > 0 \) such that 
		\begin{equation*}
			0<|p(\xi)| \leq \frac{1}{j} |\xi|^{m - (r + \varepsilon)},
		\end{equation*}
        for infinitely many \( \xi \in \mathbb{Z}^n \setminus \{0\} \). By Corollary \ref{coroGS1}, this would imply that \( \operatorname{ind}_{GS}(p(D)) \geq r + \varepsilon \), contradicting the assumption that \( \operatorname{ind}_{GS}(p(D)) = r \). Therefore, condition {\it (i)} must hold. If $r=0$, this concludes the proof of necessity.
		
		Now, assume that \( r > 0 \) and condition {\it (ii)} fails. Then there exists \( \varepsilon > 0 \) such that  
		\[
		0<|p(\xi)| \leq |\xi|^{m - (r - \varepsilon)}
		\]
	 for only finitely many \( \xi \). In this case, we can find \( K > 0 \) sufficiently large such that  
		\[
		|p(\xi)| \geq K |\xi|^{m - (r - \varepsilon)},
		\]
		for all \( \xi \in \mathbb{Z}^n \setminus \{0\} \) where \( p(\xi) \neq 0 \). By Theorem \ref{theoGS}, this implies that \( p(D) \) is GS-\( (r - \varepsilon) \), contradicting \( \operatorname{ind}_{GS}(p(D)) = r \). Thus, condition {\it (ii)} must also hold, completing the proof.
	\end{proof}

	Next, we show that verifying global solvability with a loss of \( r \) derivatives reduces to checking the condition from Theorem \ref{theoGS} for a single Sobolev order. This simplification parallels the approach used for the GH-\( r \) property.
	
	\begin{proposition}\label{prop_single_sobolev_order_GS}
		Let \( p \in S^{[m]}(\mathbb{Z}^n) \) and \( r \geq 0 \). The operator \( p(D) \) is GS-\( r \) if and only if there exists  \( k \in \mathbb{R} \) such that for every function \( f \in H^k(\mathbb{T}^n) \cap \left( \ker {}^t p(D) \cap C^\infty(\mathbb{T}^n) \right)^0 \), there exists \( u \in H^{k + m - r}(\mathbb{T}^n) \) satisfying \( p(D)u = f \).
	\end{proposition}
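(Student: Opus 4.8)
The plan is to mirror the argument used for Proposition \ref{prop_single_sobolev_order}, transporting the single-order hypothesis to every Sobolev order by conjugating $p(D)$ with Bessel potentials. One implication is immediate: if $p(D)$ is GS-$r$, then by Definition \ref{def_GS} the stated solvability property holds for \emph{every} $k \in \mathbb{R}$, in particular for one fixed $k$.

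For the converse, suppose the property holds for some fixed $k \in \mathbb{R}$, and let $k' \in \mathbb{R}$ be arbitrary. I would write $k = k' + s$ with $s \in \mathbb{R}$ and recall the Bessel potential $\Lambda^s = (I-\Delta)^{-s/2}$, a Fourier multiplier with symbol $(1+\|\xi\|^2)^{-s/2}$ which is a linear isomorphism $H^\ell(\mathbb{T}^n) \to H^{\ell+s}(\mathbb{T}^n)$ for every $\ell \in \mathbb{R}$. Given $f \in H^{k'}(\mathbb{T}^n) \cap \left( \ker {}^t p(D) \cap C^\infty(\mathbb{T}^n) \right)^0$, applying $\Lambda^s$ produces $\Lambda^s f \in H^k(\mathbb{T}^n)$.

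The one point requiring care — and the step I expect to be the crux — is checking that $\Lambda^s f$ still lies in the annihilator $\left( \ker {}^t p(D) \cap C^\infty(\mathbb{T}^n) \right)^0$. This is precisely where Lemma \ref{equivalence_ker0} enters: since $\widehat{\Lambda^s f}(\xi) = (1+\|\xi\|^2)^{-s/2}\widehat{f}(\xi)$, the Fourier coefficients of $\Lambda^s f$ and $f$ share the same zero pattern; as $f$ is in the annihilator, Lemma \ref{equivalence_ker0} gives $\widehat{f}(\xi) = 0$ whenever $p(\xi) = 0$, hence also $\widehat{\Lambda^s f}(\xi) = 0$ for such $\xi$, and a second application of Lemma \ref{equivalence_ker0} yields $\Lambda^s f \in \left( \ker {}^t p(D) \cap C^\infty(\mathbb{T}^n) \right)^0$.

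Finally, by the hypothesis applied at the order $k$, there is $v \in H^{k+m-r}(\mathbb{T}^n)$ with $p(D)v = \Lambda^s f$. Setting $u := \Lambda^{-s} v \in H^{k+m-r-s}(\mathbb{T}^n) = H^{k'+m-r}(\mathbb{T}^n)$ and using that Fourier multipliers commute, one gets $p(D)u = \Lambda^{-s} p(D) v = \Lambda^{-s}\Lambda^s f = f$. Since $k'$ was arbitrary, the GS-$r$ property follows. Beyond the annihilator-invariance check, the argument is routine, consisting of bookkeeping with the isomorphisms $\Lambda^s$ and the commutativity of Fourier multipliers.
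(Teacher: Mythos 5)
Your proposal is correct and follows essentially the same route as the paper's proof: both reduce the arbitrary order $k'$ to the fixed order $k$ by conjugating with Bessel potentials $\Lambda^{\pm s}$, verify via Lemma \ref{equivalence_ker0} that the transported datum stays in $\left(\ker {}^t p(D)\cap C^\infty(\mathbb{T}^n)\right)^0$ because multiplication by the nonvanishing symbol $(1+\|\xi\|^2)^{\mp s/2}$ preserves the zero pattern of the Fourier coefficients, and then use commutativity of Fourier multipliers to pull the solution back; the only difference is the sign convention in how $s$ is defined, which is immaterial.
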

	
	\begin{proof}
		The necessity follows directly from the definition of GS-\( r \). 
		For sufficiency, assume there exists \( k \in \mathbb{R} \) such that for every \( f \in H^k(\mathbb{T}^n) \cap \left( \ker {}^t p(D) \cap C^\infty(\mathbb{T}^n) \right)^0 \), there exists \( u \in H^{k + m - r}(\mathbb{T}^n) \) satisfying \( p(D)u = f \).
		
		Let \( f \in H^{k'}(\mathbb{T}^n) \cap \left( \ker {}^t p(D) \cap C^\infty(\mathbb{T}^n) \right)^0 \) for some arbitrary \( k' \in \mathbb{R} \). Write \( k' = k + s \) with \( s \in \mathbb{R} \), and let \( \Lambda^s: H^k(\mathbb{T}^n) \to H^{k+s}(\mathbb{T}^n) \) denote the Bessel potential of order \( s \), as in the proof of Proposition \ref{prop_single_sobolev_order}. 
		
		Define \( g \coloneqq \Lambda^{-s} f \), which belongs to \( H^k(\mathbb{T}^n) \).	Since \( \widehat{g}(\xi) = (1 + |\xi|^2)^{-s/2} \widehat{f}(\xi) \), Lemma \ref{equivalence_ker0} implies \( g \in \left( \ker {}^t p(D) \cap C^\infty(\mathbb{T}^n) \right)^0 \). By hypothesis, there exists \( u \in H^{k + m - r}(\mathbb{T}^n) \) such that \( p(D)u = g \).
		
		Define \( v \coloneqq \Lambda^s u \). Since Fourier multipliers commute with \( \Lambda^s \), we have:
		\[
		p(D)v = p(D) \Lambda^s u = \Lambda^s p(D)u = \Lambda^s g = f.
		\]

		Moreover, \( v \in H^{k + m - r + s}(\mathbb{T}^n) = H^{k' + m - r}(\mathbb{T}^n) \). As \( k' \) was arbitrary, this proves that \( p(D) \) is GS-\( r \).
	\end{proof}
	
	As discussed in the introduction, a classical notion of global solvability requires an operator \( p(D): C^\infty(M) \to C^\infty(M) \) to have closed range. The following theorem establishes that our definition of GS-\( r \) solvability is consistent with this classical perspective.
	
	\begin{theorem}\label{teoGS-closed}
		Let \( p \in S^{[m]}(\mathbb{Z}^n) \) and \( r \geq 0 \). The operator \( p(D) \) is GS-\( r \) if and only if for every \( k \in \mathbb{R} \), the mapping
		\[
		p(D): H^{k + m - r}(\mathbb{T}^n) \to H^k(\mathbb{T}^n)
		\]
		has closed range.
	\end{theorem}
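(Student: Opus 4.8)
The strategy is to reduce everything to the spectral characterization of GS-$r$ already established in Theorem \ref{theoGS}, and to identify the range of $p(D)$ on Sobolev scales explicitly in terms of Fourier coefficients. Throughout, for a fixed $k\in\mathbb{R}$ I read ``the range of $p(D)\colon H^{k+m-r}(\mathbb{T}^n)\to H^k(\mathbb{T}^n)$'' as the set $R_k:=p(D)\bigl(H^{k+m-r}(\mathbb{T}^n)\bigr)\cap H^k(\mathbb{T}^n)$, i.e. the image of those $u\in H^{k+m-r}(\mathbb{T}^n)$ for which $p(D)u\in H^k(\mathbb{T}^n)$. Since $p(D)$ is a Fourier multiplier, $f=p(D)u$ forces $\widehat f(\xi)=p(\xi)\widehat u(\xi)$, so $\widehat f(\xi)=0$ whenever $p(\xi)=0$; hence $R_k$ is always contained in
\[
V_k:=\bigl\{\, f\in H^k(\mathbb{T}^n) : \widehat f(\xi)=0 \ \text{for every}\ \xi\in\mathbb{Z}^n\ \text{with}\ p(\xi)=0 \,\bigr\},
\]
which, by Lemma \ref{equivalence_ker0}, equals $H^k(\mathbb{T}^n)\cap\bigl(\ker {}^t p(D)\cap C^\infty(\mathbb{T}^n)\bigr)^0$. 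Note that $V_k$ is a \emph{closed} subspace of $H^k(\mathbb{T}^n)$, being the intersection of the kernels of the continuous linear functionals $f\mapsto\widehat f(\xi)$, $\xi\in\mathbb{Z}^n$.

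For the forward implication, assume $p(D)$ is GS-$r$. By Theorem \ref{theoGS} there is $K>0$ with $|p(\xi)|\geq K|\xi|^{m-r}$ for all $\xi\in\mathbb{Z}^n\setminus\{0\}$ with $p(\xi)\neq 0$. Given $f\in V_k$, the distribution $u$ with Fourier coefficients $\widehat u(\xi)=\widehat f(\xi)/p(\xi)$ when $p(\xi)\neq 0$ and $\widehat u(\xi)=0$ otherwise satisfies $p(D)u=f$, and the norm estimate carried out in the second half of the proof of Theorem \ref{theoGS} shows $u\in H^{k+m-r}(\mathbb{T}^n)$. Hence $V_k\subseteq R_k$, so $R_k=V_k$ is closed for every $k$, which is one direction.

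For the converse I argue by contraposition. If $p(D)$ is not GS-$r$, then by Theorem \ref{theoGS} inequality \eqref{ineq-GS} fails, so there is a sequence of distinct $\xi_j\in\mathbb{Z}^n\setminus\{0\}$ with $0<|p(\xi_j)|\leq \tfrac1j|\xi_j|^{m-r}$. Fix $k:=r-m$, so that $H^{k+m-r}(\mathbb{T}^n)=H^0(\mathbb{T}^n)$ and $H^k(\mathbb{T}^n)=H^{r-m}(\mathbb{T}^n)$. For each $N\in\mathbb{N}$ let $f_N$ be the trigonometric polynomial with $\widehat{f_N}(\xi_j)=p(\xi_j)$ for $j\leq N$ and all other coefficients zero; then $f_N=p(D)u_N$ with $u_N=\sum_{j\leq N}e^{i\xi_j\cdot x}\in C^\infty(\mathbb{T}^n)\subset H^0(\mathbb{T}^n)$, so $f_N\in R_k$. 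The computation from the proof of Theorem \ref{theoGS} shows that $f_N$ converges in $H^{r-m}(\mathbb{T}^n)$ to the element $f$ with $\widehat f(\xi_j)=p(\xi_j)$ and all other coefficients zero. However, any $u\in\mathscr{D}'(\mathbb{T}^n)$ with $p(D)u=f$ must have $\widehat u(\xi_j)=1$ for all $j$, whence $\|u\|_{H^0(\mathbb{T}^n)}^2\geq\sum_j 1=+\infty$; thus $f\notin R_k$. Therefore $R_k$ is not closed for $k=r-m$, contradicting the hypothesis.

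The genuinely content-bearing point is the identification $R_k=V_k$ in the forward direction: the inclusion $R_k\subseteq V_k$ is automatic, but the reverse inclusion is exactly where the GS-$r$ property (via the lower bound of Theorem \ref{theoGS}) is needed, since without it one only obtains a solution in $\mathscr{D}'(\mathbb{T}^n)$ and the range may then be merely a proper dense subspace of $V_k$. The two norm estimates invoked are already written out verbatim in the proof of Theorem \ref{theoGS}, so no new computation is required; the only care needed in the converse is to verify that the approximants $f_N$ lie in the range while their limit does not, which is immediate from the diagonal action of $p(D)$ on Fourier coefficients.
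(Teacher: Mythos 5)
Your proof is correct, and the converse direction is essentially the paper's own argument: the same counterexample built from a sequence with $0<|p(\xi_j)|\leq\tfrac1j|\xi_j|^{m-r}$, with the only difference that you fix the single index $k=r-m$ (so the weights become $1$) while the paper runs the construction for an arbitrary $k$; since failure at one $k$ suffices, both are fine. The forward direction, however, takes a genuinely different route. The paper deduces closed range from a frequency-wise coercivity estimate $\|p(D)u\|_{H^k}\gtrsim\|u\|_{H^{k+m-r}}$ together with completeness of the Sobolev spaces, whereas you identify the range outright with the closed subspace $V_k$ of functions whose Fourier coefficients vanish on the zero set of $p$, using the GS-$r$ lower bound only to solve the division problem for each $f\in V_k$. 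Your version is more robust on two points that the paper glosses over: the coercivity estimate as displayed in the paper fails on the kernel of $p(D)$ (GS-$r$ does not exclude infinitely many zeros of the symbol, e.g.\ the wave operator with rational speed, which is GS-$1$), and for $r>0$ the multiplier need not map $H^{k+m-r}$ into $H^k$ at all (e.g.\ the heat operator with $r=1$), so your explicit reading of the range as $p(D)\bigl(H^{k+m-r}(\mathbb{T}^n)\bigr)\cap H^k(\mathbb{T}^n)$ — equivalently, the range of the graph-domain operator — is what makes the statement precise; identifying it with $V_k$ settles closedness without any lower bound on the quotient. What the paper's approach buys, when the symbol has no zeros or after restricting to the frequencies where $p(\xi)\neq0$, is a quicker argument that also records the quantitative a priori bound for the solution. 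No gap in your argument.
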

	
		\begin{proof}
		Suppose that $p(D)$ is GS-$r$. Then by Theorem \ref{theoGS} we have that there exists $K>0$ such that
		\begin{align}\label{ineq-GS-strong}
			|p(\xi)|\geq K|\xi|^{m-r},
		\end{align}
		for every $\xi\in\Z^n\backslash\{0\}$ such that $p(\xi)\neq 0$.
		
		First notice that for $m'>m$, $p(D)$ if of order $m'$. Therefore $p(D)$ maps $H^{k + m - r}(\mathbb{T}^n)\subset H^{k+m'}(\mathbb{T}^n)$ to $H^k(\mathbb{T}^n)$, and the mapping \(p(D): H^{k + m - r}(\mathbb{T}^n) \to H^k(\mathbb{T}^n)\) is well defined. Now Let $ u \in H^{k + m - r}(\mathbb{T}^n) $. Then 
		\begin{align*}
			\|p(D)u\|_{H^k(\mathbb{T}^n)}^2 &= \sum_{\xi \in \mathbb{Z}^n} (1 + \|\xi\|^2)^k \left| \widehat{p(D)u}(\xi) \right|^2 \\
			&= \sum_{\xi \in \mathbb{Z}^n} (1 + \|\xi\|^2)^k |p(\xi)|^2 |\widehat{u}(\xi)|^2 \\
			&\geq K(1+n)^{|m-r|} \sum_{\xi \in \mathbb{Z}^n} (1 + \|\xi\|^2)^{k + m - r} |\widehat{u}(\xi)|^2 \\
			&= K(1+n)^{|m-r|} \|u\|_{H^{k + m - r}(\mathbb{T}^n)}^2,
		\end{align*}
		
		Since both $ H^k(\mathbb{T}^n) $ and $ H^{k + m - r}(\mathbb{T}^n) $ are Banach spaces, it follows that $ p(D):H^{k + m - r}(\mathbb{T}^n)\to H^k(\mathbb{T}^n)$ has closed range.

	Conversely, suppose that $ p(D) $ is not GS-$ r $. Then, by Theorem \ref{theoGS}, inequality \eqref{ineq-GS-strong} does not hold for any $ K > 0 $. Consequently, there exists a sequence of distinct elements $ \xi_j \in \mathbb{Z}^n \setminus \{0\} $ such that
	\begin{equation}\label{ineq_contradiction_closed}
		0 < |p(\xi_j)| \leq \frac{1}{j} |\xi_j|^{m - r}, \quad  j \in \mathbb{N}.
	\end{equation}
	
	Define a sequence of distributions $ (u_\ell)_{\ell \in \mathbb{N}} \in \mathscr{D}'(\mathbb{T}^n) $ whose Fourier coefficients are given by:
	\[
	\widehat{u_\ell}(\xi) =
	\begin{cases}
		(1 + \|\xi_j\|^2)^{(-k - m + r)/2}, & \text{if } \xi = \xi_j, \, j \leq \ell, \\
		0, & \text{otherwise.}
	\end{cases}
	\]
	
	For each $ \ell \in \mathbb{N} $, the Fourier coefficients of $ p(D)u_\ell $ are:
	\[
	\widehat{p(D)u_\ell}(\xi) =
	\begin{cases}
		p(\xi_j)(1 + \|\xi_j\|^2)^{(-k - m + r)/2}, & \text{if } \xi = \xi_j, \, j \leq \ell, \\
		0, & \text{otherwise.}
	\end{cases}
	\]
	
	Since both $ p(D)u_\ell $ and $ u_\ell $ have only finitely many nonzero Fourier coefficients, it follows that $ p(D)u_\ell, u_\ell \in \bigcap_{s \in \mathbb{R}} H^s(\mathbb{T}^n) = C^\infty(\mathbb{T}^n) $.
	
	Now consider the sequence of Fourier coefficients defined by:
	\[
	\widehat{f}(\xi) =
	\begin{cases}
		p(\xi_j)(1 + \|\xi_j\|^2)^{(-k - m + r)/2}, & \text{if } \xi = \xi_j, \, j \in \mathbb{N}, \\
		0, & \text{otherwise.}
	\end{cases}
	\]
	
	We claim that these coefficients define a distribution $ f \in H^k(\mathbb{T}^n) $. Indeed, we compute:
	\begin{align*}
		\|f\|_{H^k(\mathbb{T}^n)}^2 &= \sum_{\xi \in \mathbb{Z}^n} (1 + \|\xi\|^2)^k |\widehat{f}(\xi)|^2 \\
		&= \sum_{j \in \mathbb{N}} (1 + \|\xi_j\|^2)^k |p(\xi_j)|^2 (1 + \|\xi_j\|^2)^{-k - m + r} \\
		&\leq (1 + n)^{|m - r|} \sum_{j \in \mathbb{N}} \frac{1}{j^2} < \infty.
	\end{align*}
	
	Thus, $ f \in H^k(\mathbb{T}^n) $.
	
	\smallskip
	Next, observe that $ p(D)u_\ell \to f $ in $ H^k(\mathbb{T}^n) $ as $ \ell \to \infty $. Indeed, we have:
	\[
	\|p(D)u_\ell - f\|_{H^k(\mathbb{T}^n)}^2 = \sum_{j \geq \ell} (1 + \|\xi_j\|^2)^k |p(\xi_j)|^2.
	\]
	
	Since $ \sum_{j \geq 1} (1 + \|\xi_j\|^2)^k |p(\xi_j)|^2 $ converges, it follows from the Cauchy criterion for series that:
	\[
	\sum_{j \geq \ell} (1 + \|\xi_j\|^2)^k |p(\xi_j)|^2 \to 0 \quad \text{as } \ell \to \infty.
	\]
	
	Hence, $ \|p(D)u_\ell - f\|_{H^k(\mathbb{T}^n)}^2 \to 0 $, which implies $ p(D)u_\ell \to f $ in $ H^k(\mathbb{T}^n) $.
	
	Finally, note that $ f $ is not in the range of $ p(D): H^{k + m - r}(\mathbb{T}^n) \to H^k(\mathbb{T}^n) $. Indeed, if $ p(D)u = f $, then comparing Fourier coefficients yields:
	\[
	\widehat{u}(\xi) =
	\begin{cases}
		(1 + \|\xi_j\|^2)^{(-k - m + r)/2}, & \text{if } \xi = \xi_j, \, j \in \mathbb{N}, \\
		0, & \text{otherwise.}
	\end{cases}
	\]
	
	However, this implies that $ u \notin H^{k + m - r}(\mathbb{T}^n) $, since:
	\[
	\|u\|_{H^{k + m - r}(\mathbb{T}^n)}^2 = \sum_{j=1}^\infty (1 + \|\xi_j\|^2)^{(k + m - r)} (1 + \|\xi_j\|^2)^{(-k - m + r)} = \sum_{j=1}^\infty 1 = \infty.
	\]
	
	Therefore $ p(D): H^{k + m - r}(\mathbb{T}^n) \to H^k(\mathbb{T}^n) $ does not have closed range.
  \end{proof}
  
	\begin{corollary}\label{coro-GS2}
		Let \( r' > 0 \) and \( p \in S^{[m]}(\mathbb{Z}^n) \). 
		If there exists a constant \( K' > 0 \) such that
		\begin{equation}\label{ineq-not-GS-coro-2}
			0 < |p(\xi)| \leq K' |\xi|^{m - r'},
		\end{equation}
		for infinitely many \( \xi \in \mathbb{Z}^n \setminus \{0\} \), then the operator 
		\[ p(D): H^{k + m - r}(\mathbb{T}^n) \to H^k(\mathbb{T}^n) \]
		does not have closed range for any \( 0 \leq r < r' \) and \( k \in \mathbb{R} \).
		
		Consequently, if \( \operatorname{ind}_{GS}(p(D)) > r \), then \( p(D): H^{k + m - r}(\mathbb{T}^n) \to H^k(\mathbb{T}^n) \) does not have closed range for any \( k \in \mathbb{R} \).
	\end{corollary}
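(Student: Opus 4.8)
The plan is to obtain both assertions by combining the sequence-extraction argument of Corollary~\ref{coroGS1} with the construction performed in the converse direction of the proof of Theorem~\ref{teoGS-closed}, where a sequence of ``bad'' frequencies was already used to produce an element lying in the closure of the range of $p(D)$ but not in the range itself.

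For the first assertion, fix $k \in \mathbb{R}$ and $r$ with $0 \le r < r'$, and write $r' = r + \varepsilon$ with $\varepsilon > 0$. From the infinitely many $\xi$ satisfying $0 < |p(\xi)| \le K'|\xi|^{m-r'} = K'|\xi|^{m-r}|\xi|^{-\varepsilon}$ I would extract, exactly as in the proof of Corollary~\ref{coroGS1}, a sequence of distinct $\xi_j \in \mathbb{Z}^n \setminus \{0\}$ with $|\xi_j| \ge (K'j)^{1/\varepsilon}$, so that $0 < |p(\xi_j)| \le \tfrac{1}{j}|\xi_j|^{m-r}$ for every $j$. This is precisely the situation handled in the converse part of the proof of Theorem~\ref{teoGS-closed}, and that argument then applies verbatim at the fixed order $k$: the finitely supported $u_\ell$ constructed there belong to $C^\infty(\mathbb{T}^n)$, the images $p(D)u_\ell$ converge in $H^k(\mathbb{T}^n)$ to the element $f$ with $\widehat f(\xi_j) = p(\xi_j)(1+\|\xi_j\|^2)^{(-k-m+r)/2}$, while $f$ is not in the range of $p(D): H^{k+m-r}(\mathbb{T}^n) \to H^k(\mathbb{T}^n)$ because its only possible preimage fails to lie in $H^{k+m-r}(\mathbb{T}^n)$. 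Hence the range is not closed; since $k$ and $r \in [0,r')$ were arbitrary, the first assertion follows.

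For the ``consequently'' part, assume $\operatorname{ind}_{GS}(p(D)) > r$. The point requiring care is that one cannot simply invoke the first assertion with $r' = r$, since that assertion needs the strict inequality $r < r'$; so I would first choose an auxiliary exponent $r''$ with $r < r'' < \operatorname{ind}_{GS}(p(D))$ (taking $r'' = r+1$ if the index is infinite). Then $p(D)$ is not GS-$r''$, so by Theorem~\ref{theoGS} the lower bound~\eqref{ineq-GS} fails for the exponent $r''$: for every $K > 0$ there is some $\xi \ne 0$ with $p(\xi) \ne 0$ and $|p(\xi)| < K|\xi|^{m-r''}$. Applying this with $K = \tfrac1j$ for each $j \in \mathbb{N}$ produces frequencies $\xi_j$ with $0 < |p(\xi_j)| < \tfrac1j|\xi_j|^{m-r''}$; a fixed nonzero frequency can satisfy such an estimate for only finitely many $j$ (its right-hand side tends to $0$ while $|p(\xi)|$ remains a fixed positive number), so infinitely many of the $\xi_j$ are distinct, and along them $0 < |p(\xi_j)| \le |\xi_j|^{m-r''}$. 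Thus the hypothesis of the first assertion holds with $K' = 1$ and with $r''$ in place of $r'$, and since $r < r''$ the first assertion yields that $p(D): H^{k+m-r}(\mathbb{T}^n) \to H^k(\mathbb{T}^n)$ has non-closed range for every $k \in \mathbb{R}$.

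I expect the only genuinely delicate step to be this extraction — passing from the ``for every $K$ there exists $\xi$'' form of the failure of~\eqref{ineq-GS} to a single inequality valid for infinitely many \emph{distinct} $\xi$, and, in the ``consequently'' part, reserving the strict gap the first assertion requires. Everything else is a direct reuse of estimates already established in the proofs of Corollary~\ref{coroGS1} and Theorem~\ref{teoGS-closed}.
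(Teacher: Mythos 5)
Your proof is correct, but it is organized differently from the paper's. For the first assertion the paper does not re-enter the proof of Theorem \ref{teoGS-closed} at all: it applies Corollary \ref{coroGS1} to conclude that $p(D)$ is not GS-$r$ for any $0\le r<r'$, then cites Theorem \ref{teoGS-closed} as a black box to obtain non-closed range at some single level $k_0$, and finally transfers the failure to every level $k'=k_0+s$ by conjugating with the Bessel potential $\Lambda^s$: since $\Lambda^s$ is a homeomorphism commuting with $p(D)$, the range of $p(D):H^{k'+m-r}\to H^{k'}$ is the homeomorphic image of the range of $p(D):H^{k_0+m-r}\to H^{k_0}$, hence not closed. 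You instead observe that the explicit counterexample construction in the converse half of the proof of Theorem \ref{teoGS-closed} is uniform in $k$, and you re-run it at each fixed $k$ after the same sequence extraction as in Corollary \ref{coroGS1}; that is sound (the $\xi_j$ are distinct, $f\in H^k$, $p(D)u_\ell\to f$, and the forced coefficients $\widehat u(\xi_j)$ already rule out any preimage in $H^{k+m-r}$), and it buys self-containedness at the cost of unpacking the theorem's proof, whereas the paper's conjugation trick is the more reusable principle. For the ``consequently'' clause, which the paper leaves implicit, your detour through an auxiliary exponent $r''$ with $r<r''<\operatorname{ind}_{GS}(p(D))$ is valid but unnecessary: $\operatorname{ind}_{GS}(p(D))>r$ already implies $p(D)$ is not GS-$r$, so Theorem \ref{theoGS} (with $K=\tfrac1j$, plus your observation that a fixed frequency can occur only finitely often) produces distinct $\xi_j$ with $0<|p(\xi_j)|\le\tfrac1j|\xi_j|^{m-r}$ directly at exponent $r$, and the same closed-range construction (or the paper's conjugation argument) finishes without needing the strict gap.
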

	
	\begin{proof}
		Suppose \( p \) satisfies inequality \eqref{ineq-not-GS-coro-2}. By Corollary \ref{coroGS1}, it follows that \( p(D) \) is not GS-\( r \) for any \( 0 \leq r < r' \). Consequently, by Theorem \ref{teoGS-closed}, the operator \( p(D): H^{k_0 + m - r}(\mathbb{T}^n) \to H^{k_0}(\mathbb{T}^n) \) does not have closed range for some \( k_0 \in \mathbb{R} \) and for any \( 0 \leq r < r' \).
		
		Now, let \( k' \in \mathbb{R} \) be arbitrary, and write \( k' = k_0 + s \) for some \( s \in \mathbb{R} \). Let \( \Lambda^s: H^{k}(\mathbb{T}^n) \to H^{k+s}(\mathbb{T}^n) \) denote the Bessel potential of order \( s \), as in the proof of Proposition \ref{prop_single_sobolev_order}. Since \( \Lambda^s \) is a homeomorphism, we have:
		\[
		\Lambda^s \circ p(D)(H^{k_0 + m - r}(\mathbb{T}^n)) \subset H^{k_0 + s}(\mathbb{T}^n) = H^{k'}(\mathbb{T}^n).
		\]
		
		Moreover, since Fourier multipliers commute with each other, it follows that:
		\begin{align*}
			\Lambda^s \circ p(D)(H^{k_0 + m - r}(\mathbb{T}^n)) & = p(D) \circ \Lambda^s(H^{k_0 + m -r} (\mathbb{T}^n)) \\
			&= p(D)(H^{k' + m - r}(\mathbb{T}^n)).			
		\end{align*}

		Thus, the range of \( p(D): H^{k' + m - r}(\mathbb{T}^n) \to H^{k'}(\mathbb{T}^n) \) is not closed, as it is homeomorphic to the range of \( p(D): H^{k_0 + m - r}(\mathbb{T}^n) \to H^{k_0}(\mathbb{T}^n) \), which is not closed. This completes the proof.
	\end{proof}

%=================================
%=================================
\section{Application: Vector Fields on the Two-Torus}
%=================================
%=================================
	
	In the early 1970s, N. Wallach and S. Greenfield published a series of seminal papers that established a connection between the global hypoellipticity of vector fields on the torus and Diophantine phenomena (see \cite{GW1972_pams, GW1973_tams, GW1973_top}). Their work laid the foundation for understanding how the regularity of solutions to partial differential equations on the torus is influenced by number-theoretic properties of the coefficients of the operators. 
	
	In this section, we revisit their results for vector fields on the two-torus \( \mathbb{T}^2 \), uncovering deeper relationships between the existence and regularity of solutions and fundamental concepts in number theory. These include the algebraic degree of irrational numbers and measures of irrationality, which play a central role in determining the hypoellipticity and solvability properties of such operators.
	
	To begin our analysis, consider the vector field defined on \( \mathbb{T}^2 \):
	\[
	p(D) = \partial_{x_1} - \alpha \partial_{x_2},
	\]
	where \( \alpha \in \mathbb{C} \) and \( (x_1, x_2) \in \mathbb{T}^2 \). The symbol of this operator is given by:
	\[
	p(\xi) = i(\xi_1 - \alpha \xi_2), \quad \xi = (\xi_1, \xi_2) \in \mathbb{Z}^2.
	\]
	
	When \( \operatorname{Im}(\alpha) \neq 0 \), we observe that \( p(\xi) \neq 0 \) for all \( \xi \in \mathbb{Z}^2 \setminus \{0\} \). This implies that the vector field is elliptic, and consequently, by Remark \ref{elliptic_is_GH-0}, it is globally hypoelliptic with no loss of derivatives (GH-\( 0 \)) and has global hypoellipticity index \( 0 \).
	
	The case \( \alpha \in \mathbb{R} \), i.e., \( \operatorname{Im}(\alpha) = 0 \), requires a more detailed investigation. In this setting, we have:
	\[
	|p(\xi)| = |\xi_1 - \alpha \xi_2| = |\xi_2| \left| \frac{\xi_1}{\xi_2} - \alpha \right|,
	\]
	for every \( \xi \in \mathbb{Z}^2 \) such that \( \xi_2 \neq 0 \).
	
	If  \( \alpha \) is rational, there exist infinitely many pairs \( (\xi_1, \xi_2) \in \mathbb{Z}^2 \) such that \( p(\xi) = 0 \). Consequently, the vector field \( p(D) \) fails to be GH-\( r \) for any \( r \geq 0 \).
	
	Now consider the case where \( \alpha \) is irrational. Within the class of irrational numbers, two important subclasses arise: algebraic numbers and transcendental numbers. 
	
	To provide a structured and insightful presentation, we begin by analyzing the case of algebraic numbers.
	
	\begin{definition}
		A complex number \( \lambda \) is said to be algebraic if it is a root of a nonzero polynomial with integer coefficients. The degree \( d \) of this algebraic number is the smallest possible degree of a polynomial that vanishes at \( \lambda \).
	\end{definition}

	Let \( \alpha \) be an algebraic irrational number of degree \( d \geq 2 \). Since \( \alpha \) is irrational, we have \( p(\xi) = 0 \) only when \( \xi = 0 \). Therefore, condition {\it (i)} of Theorem \ref{theoGH} is satisfied for any irrational \( \alpha \), and we only need to focus on condition {\it (ii)}, which concerns the asymptotic behavior of the symbol \( p(\xi) \).
	
	For \( \xi \in \mathbb{Z}^2 \) with \( \xi_2 \neq 0 \), the symbol can be expressed as:
	\begin{equation}\label{symbol-vf} 
		|p(\xi)| = |\xi_2| \left| \alpha - \frac{\xi_1}{\xi_2} \right|.
	\end{equation}
	
	By Liouville's theorem, there exists a constant \( K > 0 \), depending only on \( \alpha \), such that
	\[
	\left| \alpha - \frac{\xi_1}{\xi_2} \right| \geq \frac{K}{|\xi_2|^d},
	\]
	for all \( \xi_1, \xi_2 \in \mathbb{Z} \) with \( \xi_2 \neq 0 \). 
	
	When \( \xi_2 = 0 \), we have \( |p(\xi)| = |\xi_1| = |\xi| \), which satisfies \( |\xi| \geq |\xi|^{1-d} \) whenever \( \xi \neq 0 \). Combining these cases, we conclude that for all \( \xi \in \mathbb{Z}^2 \setminus \{0\} \),
	\[
	|p(\xi)| \geq \min\{K, 1\} |\xi|^{1-d}.
	\]
	
	Thus, by Theorem \ref{theoGH}, the vector field \( \partial_{x_1} - \alpha \partial_{x_2} \) is globally hypoelliptic with loss of \( d \) derivatives (GH-\( d \)) whenever \( \alpha \) is an algebraic irrational number of degree \( d \geq 2 \).
	
	The results established above lead to can be summarized as follows:

		Let \( p(D) = \partial_{x_1} - \alpha \partial_{x_2} \) be a vector field defined on the two-dimensional torus, where \( \alpha\in \mathbb{C} \). The following statements hold:
		\begin{enumerate}
			\item[{\it (i)}] If  \( \operatorname{Im}(\alpha) \neq 0 \), then \( p(D) \) is GH-\( 0 \) and \( \operatorname{ind}_{GH}(p(D)) = 0 \).
			\item[{\it (ii)}] If  \( \alpha \in \mathbb{Q} \), then \( p(D) \) is not GH-\( r \) for any \( r \geq 0 \).
			\item[{\it (iii)}] If \( \alpha \in \mathbb{R}\setminus\mathbb{Q} \) is an algebraic number of degree \( d \), then \( p(D) \) is GH-\( d \) and \( \operatorname{ind}_{GH}(p(D)) \leq d \).
		\end{enumerate}
        
	\begin{example}
		Consider the following vector field on the two-dimensional torus:
		\begin{equation*}
			p(D) = \partial_{x_1} - \sqrt[d]{\lambda} \, \partial_{x_2},
		\end{equation*}
		where \( d, \lambda \in \mathbb{N} \), \( d \geq 2 \), and \( \lambda \) is a prime number.
		
		Since \( \sqrt[d]{\lambda} \) is an algebraic irrational number of degree \( d \), it follows from the result above that \( p(D) \) is globally hypoelliptic with loss of \( d \) derivatives. This implies that $\operatorname{ind}_{GH}(p(D))\leq d$. 
	\end{example}
	
	Despite the aesthetic appeal of relating the degree of an algebraic irrational number to the corresponding order of global regularization, this result can be significantly improved by invoking Dirichlet's approximation theorem (see \cite{Schmidt_Diophantine_Book}) and the following version of the celebrated Thue--Siegel--Roth approximation theorem (see \cite{Roth1955}).
	
	\begin{theorem}[Roth, 1955]\label{Roth-Thm}
		Let \( \alpha \) be an algebraic irrational number and \( \varepsilon > 0 \). Then there exists a positive constant \( A = A(\alpha, \varepsilon) \) such that
		\begin{equation}\label{roth}
			\left| \alpha - \frac{\xi_1}{\xi_2} \right| \geq \frac{A}{|\xi_2|^{2+\varepsilon}}
		\end{equation}
		for all \( \xi_1, \xi_2 \in \mathbb{Z} \) with \( \xi_2 \neq 0 \).
	\end{theorem}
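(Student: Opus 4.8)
This is Roth's celebrated sharpening of the Liouville and Thue--Siegel approximation bounds, and in the present paper it is used purely as a black box: the intended ``proof'' is the citation to Roth \cite{Roth1955} (with Schmidt \cite{Schmidt_Diophantine_Book} as a textbook reference), since a self-contained argument is far beyond the scope of the manuscript and is, in any case, orthogonal to the functional-analytic content of Sections \ref{Section_GH-r} and~3. Nonetheless, the plan for an actual proof runs as follows. One argues by contradiction: if \eqref{roth} failed for every positive constant $A$, there would be infinitely many reduced fractions $p_j/q_j$ with $0 < |\alpha - p_j/q_j| < q_j^{-(2+\varepsilon)}$, and by passing to a subsequence one may assume the denominators $q_j$ grow as rapidly as desired. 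Fix a large integer $m$ (depending on $\varepsilon$ and $\alpha$) and positive degree bounds $d_1,\dots,d_m$ with $d_1$ huge and the ratios $d_{i+1}/d_i$ small.

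The first substantive step is a Siegel's-lemma (pigeonhole / linear-algebra) construction of a nonzero auxiliary polynomial $P \in \mathbb{Z}[x_1,\dots,x_m]$ of degree at most $d_i$ in $x_i$, with coefficients of controlled height, vanishing at the diagonal point $(\alpha,\dots,\alpha)$ to high \emph{index} (the suitably weighted order of vanishing relative to the $d_i$); this is possible because the number of monomials exceeds the number of linear conditions imposed. The second step exploits the hypothetical good approximations: choosing $q_1 \ll q_2 \ll \cdots \ll q_m$ compatibly with $d_1,\dots,d_m$ and using that each $|\alpha - p_j/q_j|$ is tiny, one transfers the high-order vanishing of $P$ (and of its low-index derivatives) from $(\alpha,\dots,\alpha)$ to the rational point $(p_1/q_1,\dots,p_m/q_m)$, i.e.\ one shows $P$ has \emph{large} index there.

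The third step is Roth's lemma, the combinatorial core: an integer polynomial of bounded height and multidegree $(d_1,\dots,d_m)$, with $d_1$ large and the ratios $d_{i+1}/d_i$ small, cannot have too large an index at a rational point whose denominators increase quickly enough. Comparing the lower bound on the index from step two with the upper bound from Roth's lemma produces the desired contradiction. The main obstacle is precisely Roth's lemma together with the delicate bookkeeping of the index across the $m$ variables --- simultaneously balancing the degrees $d_i$, the height of $P$, and the growth rates of the $q_j$ so that all the inequalities fit. This same entanglement is why the constant $A(\alpha,\varepsilon)$ is ineffective: the argument controls only the \emph{number} of exceptional fractions, not their size. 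Since Theorem \ref{Roth-Thm} is needed here only as a statement --- to produce the estimate $|p(\xi)| \geq A|\xi_2|\,|\xi_2|^{-(2+\varepsilon)}$ for the vector field $\partial_{x_1}-\alpha\partial_{x_2}$ --- we simply invoke \cite{Roth1955}.
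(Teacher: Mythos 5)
Your proposal matches the paper exactly: the manuscript states Roth's theorem as a classical result and simply cites \cite{Roth1955} (and \cite{Schmidt_Diophantine_Book}), offering no proof of its own, which is precisely your intended treatment. Your additional sketch of the standard argument (finitely many exceptional fractions absorbed into the constant $A$, auxiliary polynomial via Siegel's lemma, index transfer at $(p_1/q_1,\dots,p_m/q_m)$, Roth's lemma, ineffectivity of $A(\alpha,\varepsilon)$) is accurate but goes beyond what the paper does.
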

	
	As a consequence of \eqref{symbol-vf} and \eqref{roth}, for any given \( \varepsilon > 0 \), we obtain
	\begin{equation*}
		|p(\xi)| \geq \frac{A}{|\xi_2|^{1+\varepsilon}} \geq A |\xi|^{1-(2+\varepsilon)},
	\end{equation*}
	for every \( \xi = (\xi_1, \xi_2) \in \mathbb{Z}^2 \) such that \( \xi_2 \neq 0 \). Moreover, for \( \xi_2 = 0 \) and \( \xi_1 \in \mathbb{Z} \setminus \{0\} \), we have
	\begin{equation*}
		|p(\xi)| = |\xi_1| \geq |\xi|^{1-(2+\varepsilon)}.
	\end{equation*}
	
	It follows from Theorem \ref{theoGH} that \( p(D) = \partial_{x_1} - \alpha \partial_{x_2} \) is GH-\((2+\varepsilon)\) for every \( \varepsilon > 0 \).
	
	On the other hand, by Dirichlet's approximation theorem, for any irrational number \( \alpha \), there exist infinitely many reduced fractions \( \xi_1 / \xi_2 \) such that
	\begin{equation}\label{rothpat2}
		\left| \alpha - \frac{\xi_1}{\xi_2} \right| < \frac{1}{|\xi_2|^2}.
	\end{equation}
	
	Notice that this implies \( |\xi_2| > |\xi_1|/(\alpha+1) \). Therefore, for any given \( \varepsilon > 0 \), we have
	\begin{equation*}
		|p(\xi)| < \frac{1}{|\xi_2|} \leq \frac{1}{2|\xi_2|} \leq (|\alpha| + 1)(|\xi_1| + |\xi_2|)^{1-2}
	\end{equation*}
	for infinitely many pairs \( (\xi_1, \xi_2) \in \mathbb{Z}^2 \).
	
	It follows from Corollary \ref{coroGH} that \( p(D) = \partial_{x_1} - \alpha \partial_{x_2} \), with \( \alpha \) irrational, is not GH-\( r \) for any \( r < 2 \). Consequently,
	\[
	\operatorname{ind}_{GH}(p(D)) = 2.
	\]
	
	Thus, we have proved the following proposition.
	
	\begin{proposition}\label{Thm2-GHr-vf-T2}
		Let \( p(D) = \partial_{x_1} - \alpha \partial_{x_2} \) be a vector field on the two-dimensional torus, where \( \alpha \in \mathbb{R} \). The following statements hold:
		\begin{enumerate}
			\item[{\it (i)}] If \( \alpha \) is an algebraic irrational number then 
			\( \operatorname{ind}_{GH}(p(D)) = 2 \). 
            In particular, if $\alpha$ has degree \( 2 \), then \( p(D) \) is GH-\( 2 \).
			\item[{\it (ii)}] If \( \alpha \) is an irrational number, then \( p(D) \) cannot be GH-\( r \) for any \( r < 2 \), and $\operatorname{ind}_{GH}(p(D))\geq 2$.
		\end{enumerate}
	\end{proposition}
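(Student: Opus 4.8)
The plan is to assemble the two one-sided estimates developed in the discussion above into the stated index formula, treating separately an upper bound for $\operatorname{ind}_{GH}(p(D))$ (which will need $\alpha$ algebraic) and a lower bound (which will need only that $\alpha$ is irrational). Throughout, since $\alpha$ is irrational the symbol $p(\xi) = i(\xi_1 - \alpha\xi_2)$ vanishes only at $\xi = 0$, so the finite-zero hypotheses of Theorem~\ref{theoGH} and Corollary~\ref{coroGH} are automatically satisfied and only the polynomial size of $|p(\xi)|$ is at issue; the key identity to exploit is $|p(\xi)| = |\xi_2|\,\bigl|\alpha - \tfrac{\xi_1}{\xi_2}\bigr|$ for $\xi_2 \neq 0$.

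For the lower bound, which covers all of {\it (ii)} and half of {\it (i)}: I would invoke Dirichlet's approximation theorem to obtain infinitely many reduced fractions $\xi_1/\xi_2$ with $\bigl|\alpha - \tfrac{\xi_1}{\xi_2}\bigr| < |\xi_2|^{-2}$. For such $\xi$ one gets $|p(\xi)| < |\xi_2|^{-1}$, and the crude consequence $|\xi_1| < (|\alpha|+1)|\xi_2|$ of the same inequality gives $|\xi| \leq (|\alpha|+2)|\xi_2|$, hence $|p(\xi)| \leq (|\alpha|+2)\,|\xi|^{\,1-2}$ for infinitely many $\xi \in \mathbb{Z}^2 \setminus \{0\}$. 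Corollary~\ref{coroGH} applied with $m = 1$ and $r' = 2$ then yields that $p(D)$ is not GH-$r$ for any $r < 2$, so $\operatorname{ind}_{GH}(p(D)) \geq 2$.

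For the upper bound in {\it (i)}, assume $\alpha$ is algebraic irrational. Here I would feed Roth's theorem (Theorem~\ref{Roth-Thm}) into the identity above: for each $\varepsilon > 0$ there is $A > 0$ with $\bigl|\alpha - \tfrac{\xi_1}{\xi_2}\bigr| \geq A|\xi_2|^{-(2+\varepsilon)}$ for all $\xi_2 \neq 0$, whence $|p(\xi)| \geq A|\xi_2|^{-(1+\varepsilon)} \geq A|\xi|^{\,1-(2+\varepsilon)}$, while for $\xi_2 = 0$ one has $|p(\xi)| = |\xi| \geq |\xi|^{\,1-(2+\varepsilon)}$ trivially. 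Theorem~\ref{theoGH} then gives that $p(D)$ is GH-$(2+\varepsilon)$ for every $\varepsilon > 0$, i.e.\ $\operatorname{ind}_{GH}(p(D)) \leq 2$; combined with the lower bound this proves $\operatorname{ind}_{GH}(p(D)) = 2$. For the degree-$2$ refinement I would replace Roth's theorem by Liouville's theorem with $d = 2$, which supplies $\bigl|\alpha - \tfrac{\xi_1}{\xi_2}\bigr| \geq K|\xi_2|^{-2}$ with no $\varepsilon$-loss, so $|p(\xi)| \geq \min\{K,1\}\,|\xi|^{\,1-2}$ and Theorem~\ref{theoGH} applies directly with $r = 2$, showing $p(D)$ is GH-$2$.

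The only substantive ingredients are the two Diophantine inputs — Roth's theorem for the upper bound and Dirichlet's theorem for the lower bound — which are quoted as black boxes; everything else is bookkeeping. The only points requiring a little care are the passage from ``GH-$(2+\varepsilon)$ for all $\varepsilon > 0$'' to ``$\operatorname{ind}_{GH}(p(D)) \leq 2$'' and the separate treatment of the $\xi_2 = 0$ frequencies (and of replacing $|\xi_2|$ by $|\xi|$ in the exponents). I do not expect any genuine obstacle beyond these routine verifications.
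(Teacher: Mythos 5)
Your proposal is correct and follows essentially the same route as the paper: Dirichlet's theorem feeds Corollary~\ref{coroGH} (with $m=1$, $r'=2$) for the lower bound valid for every irrational $\alpha$, Roth's theorem feeds Theorem~\ref{theoGH} to give GH-$(2+\varepsilon)$ and hence $\operatorname{ind}_{GH}(p(D))\leq 2$, and Liouville's inequality with $d=2$ gives the sharper GH-$2$ statement for quadratic irrationals. The constant bookkeeping (e.g.\ $(|\alpha|+2)$ versus the paper's constant) differs only cosmetically and your chain of inequalities is in fact cleaner than the paper's.
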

        
	It remains to analyse the case where the irrational number  \( \alpha \) is transcendental, meaning it is not a root of any polynomial with integer coefficients. We begin by recalling the definition of a Liouville number, which plays a crucial role in the study of global hypoellipticity.
	
	\begin{definition}\label{Liouv-number}
		An irrational number \( \alpha \) is said to be a Liouville number if there exists a sequence \( (k_j, \ell_j) \in \mathbb{Z}^2 \) such that \( \ell_j \to \infty \) and
		\begin{equation*}
			\left| \alpha - \frac{k_j}{\ell_j} \right| < \frac{1}{\ell_j^j}, \quad j \in \mathbb{N}.
		\end{equation*}
	\end{definition}
	
	In \cite{GW1972_pams}, it was established that the vector field \( p(D) = \partial_{x_1} - \alpha \partial_{x_2} \), with \( \alpha \in \mathbb{R} \setminus \mathbb{Q} \), is globally hypoelliptic if and only if \( \alpha \) is an irrational non-Liouville number. Consequently, by Corollary \ref{coro_GH_equiv}, if \( \alpha \) is a Liouville number, then \( p(D) \) fails to be GH-\( r \) for any \( r \geq 0 \). This implies that:
	\[
	\operatorname{ind}_{GH}(p(D)) = \infty.
	\]

	To complete our characterization of global hypoellipticity with loss of derivatives for vector fields on \( \mathbb{T}^2 \), we introduce the notion of \textit{irrationality measure} (also referred to as the \textit{irrationality exponent} in \cite[Appendix E]{Bugeaud_2012}). This concept quantifies how well a real number can be approximated by rational numbers.
	
	\begin{definition}
		The \textit{irrationality measure} \( \mu(\alpha) \) of a real number \( \alpha \) is defined as the infimum of all real numbers \( \mu > 0 \) such that the inequality
		\[
		0 < \left| \alpha - \frac{\xi_1}{\xi_2} \right| < \frac{1}{|\xi_2|^\mu},
		\]
		has at most finitely many solutions \( \xi_1 / \xi_2 \), where \( \xi_1, \xi_2 \in \mathbb{Z} \) and \( \xi_2 \neq 0 \).\\
		If the inequality has infinitely many solutions for all \( \mu > 0 \), then we set \( \mu(\alpha) = \infty \).
	\end{definition}

	\begin{remark}\label{measure-remark}
		For any real number \( \alpha \), the irrationality measure satisfies \( \mu(\alpha) = 1 \) when \( \alpha \) is rational, and \( \mu(\alpha) \geq 2 \) otherwise. More specifically, if \( \alpha \) is an algebraic irrational number, Roth's theorem (Theorem \ref{Roth-Thm}) implies that \( \mu(\alpha) = 2 \). For transcendental numbers, the irrationality measure can exceed 2; for instance, if \( \alpha \) is a Liouville number, then \( \mu(\alpha) = \infty \), as defined in Definition \ref{Liouv-number}. The base of the natural logarithm \( e \) satisfies \( \mu(e) = 2 \), while for \( \pi \), it is known that \( 2 \leq \mu(\pi) \leq 7.6063 \), although the exact value remains unknown. Proofs of these statements can be found in \cite{Bugeaud_2012} and \cite{pi}.
	\end{remark}
	
	A direct consequence of the definition of irrationality measure is that a real number \( \alpha \) has a finite irrationality measure \( \mu > 0 \) if and only if \( \mu \) is the smallest number such that for every \( \varepsilon > 0 \), there exists a constant \( A_\varepsilon > 0 \) satisfying
	\begin{equation*}
		\left| \alpha - \frac{\xi_1}{\xi_2} \right| \geq \frac{A_\varepsilon}{|\xi_2|^{\mu + \varepsilon}},
	\end{equation*}
	for all \( \xi_1, \xi_2 \in \mathbb{Z} \) with \( \xi_2 \neq 0 \).
	
	Using this characterization and following a similar reasoning as in Proposition \ref{Thm2-GHr-vf-T2}, we establish the following result:
	
	\begin{proposition}\label{Thm-irrational-measure}
		Let \( p(D) = \partial_{x_1} - \alpha \partial_{x_2} \) be a vector field on the two-dimensional torus, where \( \alpha \in \mathbb{R} \). If \( \alpha \) is irrational, then:
		\[
		\operatorname{ind}_{GH}(p(D)) = \mu(\alpha).
		\]
	\end{proposition}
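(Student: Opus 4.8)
\emph{Proof idea.} The plan is to apply the index characterization of Theorem~\ref{theoGHindex} to $p(\xi)=i(\xi_1-\alpha\xi_2)$, viewed as a symbol of intrinsic order $m=1$, with $r=\mu(\alpha)$, and to feed it the one-dimensional Diophantine information about $\alpha$ through the identity \eqref{symbol-vf}, namely $|p(\xi)|=|\xi_2|\,|\alpha-\xi_1/\xi_2|$ for $\xi_2\neq0$. Since $\alpha$ is irrational, $p(\xi)=0$ only for $\xi=0$, so condition {\it (i)} of Theorem~\ref{theoGHindex} holds automatically. Before using that theorem we dispose of the case $\mu(\alpha)=\infty$: this occurs exactly when $\alpha$ is a Liouville number, and then the Greenfield--Wallach theorem together with Corollary~\ref{coro_GH_equiv} already shows that $p(D)$ is not GH-$r$ for any $r\geq0$, so $\operatorname{ind}_{GH}(p(D))=\infty=\mu(\alpha)$. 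Henceforth assume $\mu\vcentcolon=\mu(\alpha)<\infty$; by Remark~\ref{measure-remark} we have $\mu\geq2$, so $r=\mu>0$ and condition {\it (iii)} is not vacuous.

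For condition {\it (ii)}, fix $\varepsilon>0$. The characterization of the irrationality measure recalled just above this proposition gives a constant $A_\varepsilon>0$ with $|\alpha-\xi_1/\xi_2|\geq A_\varepsilon|\xi_2|^{-(\mu+\varepsilon)}$ for all $\xi_1,\xi_2\in\mathbb{Z}$ with $\xi_2\neq0$. Hence $|p(\xi)|\geq A_\varepsilon|\xi_2|^{1-(\mu+\varepsilon)}$; since $1-(\mu+\varepsilon)<0$ and $1\leq|\xi_2|\leq|\xi|$ this is $\geq A_\varepsilon|\xi|^{1-(\mu+\varepsilon)}$, while for $\xi_2=0$, $\xi_1\neq0$ one has $|p(\xi)|=|\xi|\geq|\xi|^{1-(\mu+\varepsilon)}$. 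Thus \eqref{ineqGH} holds with $K_\varepsilon=\min\{A_\varepsilon,1\}$.

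For condition {\it (iii)}, observe that the bound in \eqref{ineqGH_2} only gets easier as $\varepsilon$ increases, so it suffices to treat $0<\varepsilon<1$. Since $\mu$ is the infimum in the definition of $\mu(\alpha)$, the exponent $\mu-\varepsilon<\mu$ is not admissible, so there are infinitely many distinct reduced fractions $\xi_1/\xi_2$ with $0<|\alpha-\xi_1/\xi_2|<|\xi_2|^{-(\mu-\varepsilon)}$; along such a sequence $|\xi_2|\to\infty$ and $\xi_1/\xi_2\to\alpha$, so $|\xi_1|\leq(|\alpha|+1)|\xi_2|$, hence $|\xi|\leq(|\alpha|+2)|\xi_2|$, for all but finitely many of them. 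For these $\xi$ we get $0<|p(\xi)|<|\xi_2|^{1-(\mu-\varepsilon)}\leq(|\alpha|+2)^{\mu}|\xi|^{1-(\mu-\varepsilon)}$, using $|\xi_2|\geq|\xi|/(|\alpha|+2)$ and $1-(\mu-\varepsilon)<0$. So \eqref{ineqGH_2} holds with $K_\varepsilon'=(|\alpha|+2)^{\mu}$, and Theorem~\ref{theoGHindex} yields $\operatorname{ind}_{GH}(p(D))=\mu=\mu(\alpha)$.

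The one genuinely delicate point is condition {\it (iii)}: there one needs an \emph{upper} estimate for $|p(\xi)|$ by a negative power of $|\xi|=|\xi_1|+|\xi_2|$ rather than of $|\xi_2|$ alone, which forces control of $|\xi_1|/|\xi_2|$ from above; this is exactly supplied by the fact that the relevant fractions $\xi_1/\xi_2$ are near-optimal rational approximations of $\alpha$ and hence bounded. Everything else reduces to translating the scalar inequalities for $|\alpha-\xi_1/\xi_2|$ into estimates for the symbol, which is routine.
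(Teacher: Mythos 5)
Your proposal is correct and takes essentially the same route the paper intends (it explicitly says the proposition follows "by similar reasoning" to Proposition \ref{Thm2-GHr-vf-T2}): the $A_\varepsilon$-characterization of $\mu(\alpha)$ yields the lower symbol bound and hence GH-$(\mu+\varepsilon)$, the infimum property of $\mu(\alpha)$ together with boundedness of the near-optimal fractions $\xi_1/\xi_2$ yields the upper bound $0<|p(\xi)|\leq K_\varepsilon'|\xi|^{1-(\mu-\varepsilon)}$ for infinitely many $\xi$, and the $\mu(\alpha)=\infty$ (Liouville) case is handled via Greenfield--Wallach and Corollary \ref{coro_GH_equiv} exactly as in the paper. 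Your write-up is in fact more detailed than the paper's, with the $|\xi_2|$-versus-$|\xi|$ conversion carefully justified; I see no gaps.
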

	
	Another interesting result concerning global hypoellipticity with loss of derivatives for vector fields on the two-dimensional torus is as follows:
	
	\begin{proposition}\label{Lebesgue-measure}
		Let \( p(D) = \partial_{x_1} - \alpha \partial_{x_2} \), where \( \alpha \in \mathbb{R} \), be a vector field on the two-dimensional torus. Then:
		\[
		\operatorname{ind}_{GH}(p(D)) = 2 \quad \text{for almost every } \alpha \in \mathbb{R},
		\]
		with respect to the Lebesgue measure.
	\end{proposition}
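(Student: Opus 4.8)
The statement is really a metric number theory fact in disguise. By Proposition~\ref{Thm-irrational-measure}, for every irrational $\alpha\in\mathbb{R}$ one has $\operatorname{ind}_{GH}(\partial_{x_1}-\alpha\partial_{x_2})=\mu(\alpha)$, and $\mathbb{Q}$ has Lebesgue measure zero, so it suffices to prove that $\mu(\alpha)=2$ for almost every $\alpha\in\mathbb{R}$. Since the approximation condition defining $\mu$ is invariant under $\alpha\mapsto\alpha+1$ (replace $\xi_1$ by $\xi_1+\xi_2$) and $\mathbb{R}$ is a countable union of unit intervals, by countable additivity it is enough to show $\mu(\alpha)=2$ for almost every $\alpha\in[0,1]$.

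\textbf{Lower bound.} Every irrational $\alpha$ satisfies $\mu(\alpha)\geq 2$: this is recorded in Remark~\ref{measure-remark}, and it also follows directly from Dirichlet's approximation theorem, already invoked in the proof of Proposition~\ref{Thm2-GHr-vf-T2}. Hence only the matching upper bound $\mu(\alpha)\leq 2$ a.e.\ needs to be proved.

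\textbf{Upper bound via Borel--Cantelli.} Fix $\varepsilon>0$ and, for each integer $q\geq 1$, set
\[
E_q^{\varepsilon}=\bigcup_{p\in\mathbb{Z}}\left\{\alpha\in[0,1]:0<\left|\alpha-\frac{p}{q}\right|<\frac{1}{q^{2+\varepsilon}}\right\}.
\]
For $\alpha\in[0,1]$ only the numerators $p$ with $-1\leq p\leq q+1$ can contribute, so $E_q^{\varepsilon}$ is a union of at most $q+3$ intervals of length $2q^{-(2+\varepsilon)}$, whence $\lvert E_q^{\varepsilon}\rvert\leq 2(q+3)q^{-(2+\varepsilon)}\leq C q^{-(1+\varepsilon)}$ for an absolute constant $C$. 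Thus $\sum_{q\geq 1}\lvert E_q^{\varepsilon}\rvert<\infty$, and the convergence Borel--Cantelli lemma gives $\big\lvert\limsup_{q}E_q^{\varepsilon}\big\rvert=0$. If $\alpha\in[0,1]$ lies outside $\limsup_q E_q^{\varepsilon}$, then $\alpha\in E_q^{\varepsilon}$ for only finitely many $q$, and for each such $q$ only finitely many $p$ qualify; hence $0<\lvert\alpha-p/q\rvert<q^{-(2+\varepsilon)}$ has only finitely many solutions, i.e.\ $\mu(\alpha)\leq 2+\varepsilon$. Intersecting the full-measure sets so obtained for $\varepsilon=1/n$, $n\in\mathbb{N}$, yields $\mu(\alpha)\leq 2$ for almost every $\alpha\in[0,1]$, and combined with the lower bound this gives $\mu(\alpha)=2$ a.e.\ on $[0,1]$; undoing the reduction of the first paragraph finishes the proof.

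\textbf{Main obstacle.} There is no genuine difficulty here; the only points that need care are the uniform-in-$q$ counting of admissible numerators in Step~3 (so that $\sum_q\lvert E_q^{\varepsilon}\rvert$ converges) and the elementary passage from ``finitely many denominators $q$'' to ``finitely many rational solutions $p/q$''. Alternatively, the whole argument after the first paragraph may simply be replaced by a citation to the classical theorem of Borel and Khinchin in metric Diophantine approximation (see, e.g., \cite{Bugeaud_2012}), which states precisely that $\mu(\alpha)=2$ for Lebesgue-almost every real $\alpha$.
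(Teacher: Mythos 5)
Your proposal is correct and follows essentially the same route as the paper: both reduce the statement, via Proposition~\ref{Thm-irrational-measure} and the nullity of $\mathbb{Q}$, to the metric fact that $\mu(\alpha)=2$ for Lebesgue-almost every $\alpha$. The only difference is that the paper simply cites Khinchin (and the elementary Borel--Cantelli proof in Bugeaud's Appendix E) for this fact, whereas you write out that standard Borel--Cantelli computation in full, which is a correct and self-contained substitute.
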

		
	The proof of this result relies on showing that the set of real numbers \( \alpha \) satisfying \( \mu(\alpha) \neq 2 \) has Lebesgue measure zero. This fact was obtained by Khinchin in \cite[Theorem 32]{Khinchin} in a broader context. An elementary proof of this result, based on the Borel-Cantelli lemma and techniques from rational approximations in number theory, can be found in \cite[Appendix E]{Bugeaud_2012}.

	We conclude resuming the main  results of this section, providing a comprehensive characterization of global hypoellipticity and solvability with loss of derivatives for vector fields on the two-dimensional torus $\mathbb{T}^2$.	
	
	\begin{theorem}\label{Thm-GHr-GSr-characterization}
		Let \( p(D) = \partial_{x_1} - \alpha \partial_{x_2} \) be a vector field on \( \mathbb{T}^2 \), where \( \alpha \in \mathbb{C} \). Then:
		\begin{enumerate}
			\item[{\it (i)}] If \( \operatorname{Im}(\alpha) \neq 0 \), then:
			\[
			\operatorname{ind}_{GH}(p(D)) = \operatorname{ind}_{GS}(p(D)) = 0.
			\]
			Moreover, \( p(D) \) is both GH-\( 0 \) and GS-\( 0 \).
			
			\item[{\it (ii)}] If \( \alpha \in \mathbb{Q} \), then:
			\[
			\operatorname{ind}_{GH}(p(D)) = \infty \quad \text{and} \quad \operatorname{ind}_{GS}(p(D)) = 1.
			\]
			Moreover, \( p(D) \) is GS-\( 1 \).
			
			\item[{\it (iii)}] If \( \alpha \in \mathbb{R} \setminus \mathbb{Q} \), then:
			\[
			\operatorname{ind}_{GH}(p(D)) = \operatorname{ind}_{GS}(p(D)) = \mu(\alpha),
			\]
			Moreover, if \( \alpha \) is an algebraic number of degree \( 2 \), then \( p(D) \) is both GH-\( 2 \) and GS-\( 2 \).
		\end{enumerate}
	\end{theorem}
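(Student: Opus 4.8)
The plan is to treat this final theorem as a consolidation of the criteria established in the previous sections, handling the three cases \emph{(i)}, \emph{(ii)}, \emph{(iii)} separately. In every case the starting observation is the same: the symbol $p(\xi)=i(\xi_1-\alpha\xi_2)$ has a transparent zero set. It equals $\{0\}$ whenever $\operatorname{Im}\alpha\neq 0$ or $\alpha$ is irrational, and it is the infinite lattice $\{(at,bt):t\in\mathbb{Z}\}$ when $\alpha=a/b\in\mathbb{Q}$ is written in lowest terms; moreover $p(D)$ is a first order differential operator, hence of intrinsic order $1$. With these facts, Theorems \ref{theoGH} and \ref{theoGS} reduce each claim to an elementary lower or upper bound on $|p(\xi)|$.

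\emph{Case (i).} Write $\alpha=a+bi$ with $b\neq 0$, so that $|p(\xi)|^2=b^2\xi_2^2+(\xi_1-a\xi_2)^2$. I would argue that this quadratic form is elliptic: by homogeneity it suffices to bound it below on the unit circle $\xi_1^2+\xi_2^2=1$, where it is continuous and strictly positive (it can only vanish at $\xi=0$), hence bounded below there by some $c>0$; consequently $|p(\xi)|\geq\sqrt{c}\,\|\xi\|\geq\sqrt{c/2}\,|\xi|$ for all $\xi\in\mathbb{Z}^2\setminus\{0\}$. Since $p$ vanishes only at the origin, Theorem \ref{theoGH} (with $m=1$, $r=0$) gives that $p(D)$ is GH-$0$, whence $\operatorname{ind}_{GH}(p(D))=0$, and Corollary \ref{coroGHimpliesGS}(i),(iii) then yields GS-$0$ and $\operatorname{ind}_{GS}(p(D))=0$. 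This is also the content of Remark \ref{elliptic_is_GH-0} applied to $p(D)$.

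\emph{Case (ii).} With $\alpha=a/b$ in lowest terms, $p(\xi)$ vanishes on the infinite set $\{(at,bt):t\in\mathbb{Z}\}$, so condition \emph{(i)} of Theorem \ref{theoGH} fails and $p(D)$ is not GH-$r$ for any $r\geq 0$, i.e.\ $\operatorname{ind}_{GH}(p(D))=\infty$. For solvability I would observe that when $p(\xi)\neq 0$ the integer $b\xi_1-a\xi_2$ is nonzero, hence $|p(\xi)|=|b\xi_1-a\xi_2|/b\geq 1/b=(1/b)|\xi|^{1-1}$; by Theorem \ref{theoGS} this shows $p(D)$ is GS-$1$, so $\operatorname{ind}_{GS}(p(D))\leq 1$. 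For the reverse inequality, take $\xi^{(t)}=(at+1,bt)$ for $t\in\mathbb{N}$: then $b\xi^{(t)}_1-a\xi^{(t)}_2=b$, so $|p(\xi^{(t)})|=1$ for a sequence of distinct nonzero $\xi^{(t)}$ with $|\xi^{(t)}|\to\infty$. This is exactly the hypothesis of Corollary \ref{coroGS1} with $m=1$ and $r'=1$, giving $\operatorname{ind}_{GS}(p(D))\geq 1$, hence $\operatorname{ind}_{GS}(p(D))=1$.

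\emph{Case (iii), and the main obstacle.} Since $\alpha$ is irrational, $p(\xi)=0$ forces $\xi=0$, so the zero set is finite and Corollary \ref{coroGHimpliesGS}(ii) applies: $p(D)$ is GH-$r$ if and only if $p(D)$ is GS-$r$, for every $r\geq 0$. Taking the infimum over admissible $r$ on both sides gives $\operatorname{ind}_{GH}(p(D))=\operatorname{ind}_{GS}(p(D))$, with the convention that both equal $\infty$ when no such $r$ exists; by Proposition \ref{Thm-irrational-measure} this common value is $\mu(\alpha)$. Finally, if $\alpha$ is algebraic of degree $2$, Proposition \ref{Thm2-GHr-vf-T2}\emph{(i)} already gives that $p(D)$ is GH-$2$, and Corollary \ref{coroGHimpliesGS}(i) then upgrades this to GS-$2$. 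I do not expect a serious obstacle here: the only genuine computation is the ellipticity estimate of Case \emph{(i)}, and the only point requiring care is checking that the two indices in Case \emph{(ii)} do not drop below $1$, which is precisely what the explicit sequence $\xi^{(t)}=(at+1,bt)$ secures via Corollary \ref{coroGS1}. The substantive number-theoretic content — the identification of the index with $\mu(\alpha)$ — has already been isolated in Proposition \ref{Thm-irrational-measure}, so the present theorem essentially records it together with the matching solvability statement furnished by Corollary \ref{coroGHimpliesGS}.
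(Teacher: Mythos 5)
Your proposal is correct and follows essentially the same route as the paper: items (i) and (iii) are obtained from Remark \ref{elliptic_is_GH-0}, Corollary \ref{coroGHimpliesGS} and Propositions \ref{Thm2-GHr-vf-T2} and \ref{Thm-irrational-measure}, while the rational case is settled by the lower bound $|p(\xi)|\ge 1/|b|$ together with the sequence $(a t+1,\,b t)$ and Corollary \ref{coroGS1}, exactly as in the paper. Your explicit homogeneity argument for the ellipticity in case (i) and your use of Corollary \ref{coroGHimpliesGS}(ii) to cover the Liouville case $\mu(\alpha)=\infty$ are just slightly more detailed versions of what the paper leaves implicit.
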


	\begin{proof}
		Items {\it (i)} and {\it (iii)}, as well as the global hypoellipticity with loss of derivatives in {\it (iii)}, follow directly from Corollary \ref{coroGHimpliesGS} and Propositions \ref{Thm2-GHr-vf-T2} and \ref{Thm-irrational-measure}.
		
		It remains to analyze the global solvability with loss of derivatives in the case where \( p(D) = \partial_{x_1} - \alpha \partial_{x_2} \) and \( \alpha = a/b \) is rational. Using Theorem \ref{theoGSindex}, we establish the global solvability index as follows.
				
		In this case, the symbol of the operator is given by:
		\[
		p(\xi) = i\left(\xi_1 - \frac{a}{b} \xi_2\right),
		\]
		and \( p(\xi) \neq 0 \) except for points \( (\xi_1, \xi_2) \in \mathbb{Z}^2 \) satisfying \( b \xi_1 = a \xi_2 \).
		
		For every \( \xi \in \mathbb{Z}^2 \setminus \{0\} \) with \( p(\xi) \neq 0 \), we have:
		\[
		|p(\xi)| = \left| \xi_1 - \frac{a}{b} \xi_2 \right| = \frac{1}{|b|} |b \xi_1 - a \xi_2| \geq \frac{1}{|b|} \geq \frac{1}{|b|} |\xi|^{1 - 1},
		\]
		and therefore by Theorem \ref{theoGS} we conclude that $p(D)$ is GS-$1$.
		
		On the other hand, consider the infinitely many points of the form \( \xi = ((1 + a \xi_2), b\xi_2) \in \mathbb{Z}^2 \). For all such $\xi\in\Z^2$, we have that
		\[
		 |p(\xi)| = 1 \leq  |\xi|^{1 - 1}.
		\]

		Therefore, by Corollary  \ref{coroGS1}, we conclude also that $\operatorname{ind}_{GS}(p(D))=1$.
	\end{proof}
	
	\begin{example}
		Let us examine the indices $\operatorname{ind}_{GH}(p(D))$ and $\operatorname{ind}_{GS}(p(D))$ for 
		\(p(D) = \partial_{x_1} - \alpha \partial_{x_2},\) for specific choices of the real coefficient $\alpha$.
		
		\paragraph{\bf 1. Euler's Number ($e$):} 
		It is well known that the irrationality measure of Euler's number is $\mu(e) = 2$. Therefore:
		\[
		\operatorname{ind}_{GH}(p(D)) = \operatorname{ind}_{GS}(p(D)) = \mu(e) = 2.
		\]
		
		\paragraph{\bf 2. Champernowne Constants ($C_b$):} 
		Masaaki Amou \cite{AMOU1991231} proved that the irrationality measure of the Champernowne constant $C_b$ in base $b \geq 2$ is exactly $b$. Consequently, for $\alpha = C_b$, we have:
		\[
		\operatorname{ind}_{GH}(p(D)) = \operatorname{ind}_{GS}(p(D)) = b.
		\]
		
		\paragraph{\bf 3. Gamma Function ($\Gamma(1/4)$):} 
		Michel Waldschmidt \cite{Waldschmidt2008} established an upper bound for the irrationality measure of $\Gamma(1/4)$, showing that $\mu(\Gamma(1/4)) \leq 10^{330}$. Thus, if $\alpha = \Gamma(1/4)$, the indices satisfy:
		\[
		2 \leq \operatorname{ind}_{GH}(p(D)) = \operatorname{ind}_{GS}(p(D)) \leq 10^{330}.
		\]
		In particular, if $\partial_{x_1}u - \Gamma(1/4) \partial_{x_2}u = f \in H^k(\mathbb{T}^2)$ for some $k \in \mathbb{R}$, then these estimates imply that $u \in H^{k + 1 - (10^{330} + \varepsilon)}$ for every $\varepsilon > 0$.
	\end{example}

%=================================
%=================================
\section{Application: The Wave Operator}
%=================================
%=================================
	
	In this section, we apply the results developed in the previous sections to study the loss of derivatives for the wave operator on the torus. We begin by analyzing the case of the two-dimensional torus, where we determine the precise values of the global hypoellipticity and global solvability indices. 
	
	Next, we extend our analysis to higher dimensions, establishing both lower and upper bounds for these indices. 
	
	\medskip 
%=================================
\subsection{The wave operator on $\T^2$} \
%=================================

	Consider the wave operator on the two-dimensional torus:
	\[
	p(D) = \partial_{x_1}^2 - \eta^2 \partial_{x_2}^2,
	\]
	where $\eta > 0$. Its symbol is given by:
	\[
	p(\xi) = -\xi_1^2 + \eta^2 \xi_2^2, \text{ with } \xi = (\xi_1, \xi_2) \in \mathbb{Z}^2.
	\]
	
	Clearly, $p(D)$ has intrinsic order $2$. If $\eta$ is irrational, the symbol $p(\xi)$ vanishes only when $\xi = 0$. Moreover, we can rewrite $|p(\xi)|$ as:
	\[
	|p(\xi)| = |-\xi_1^2 + \eta^2 \xi_2^2| = \big||\xi_1| - \eta |\xi_2|\big| \cdot \big||\xi_1| + \eta |\xi_2|\big|.
	\]
	
	Now, if $\eta$ is an irrational algebraic number, Theorem \ref{Roth-Thm} implies that for every $\varepsilon > 0$, there exists a constant $A = A(\eta, \varepsilon) > 0$ such that:
	\[
	\left|\frac{\xi_1}{\xi_2} - \eta\right| \geq \frac{A}{|\xi_2|^{2+\varepsilon}}, \quad \text{for all } \xi_2 \neq 0.
	\]
	Using this inequality, we estimate $|p(\xi)|$ for $\xi_2 \neq 0$:
	\begin{align*}
		|p(\xi)| &= |\xi_2| \cdot \left|\frac{\xi_1}{\xi_2} - \eta\right| \cdot \left||\xi_1| + \eta |\xi_2|\right| \\
		&\geq |\xi_2| \cdot \frac{A}{|\xi_2|^{2+\varepsilon}} \cdot \min\{1, \eta\} |\xi| \\
		&= A \min\{1, \eta\} |\xi|^{2 - (2 + \varepsilon)}.
	\end{align*}
	For the case $\xi_2 = 0$, we have:
	\[
	|p(\xi)| = |\xi_1|^2 = |\xi|^2 \geq |\xi|^{2 - (2 + \varepsilon)}.
	\]
	
	Thus, combining both cases, we conclude that for every $\varepsilon > 0$, there exists a constant $C > 0$ such that:
	\[
	|p(\xi)| \geq C |\xi|^{2 - (2 + \varepsilon)},
	\]
	for all $\xi \in \mathbb{Z}^2 \setminus \{0\}$. By Theorem \ref{theoGH}, it follows that $p(D)$ is GH-$r$ for every $r > 2$, and hence:
	\[
	\operatorname{ind}_{GH}(p(D)) \leq 2.
	\]
	
	On the other hand, by Dirichlet's approximation theorem, there exist infinitely many rational approximations $\xi_1 / \xi_2 \in \mathbb{Q}$ such that:
	\[
	\left|\frac{\xi_1}{\xi_2} - \eta\right| < \frac{1}{|\xi_2|^2}.
	\]
	Using this, we estimate $|p(\xi)|$ for such frequencies:
	\begin{align*}
		0 < |p(\xi)| &= |\xi_2|^2 \cdot \left|\frac{\xi_1}{\xi_2} - \eta\right| \cdot \left|\frac{\xi_1}{\xi_2} + \eta\right| \\
		&< |\xi_2|^2 \cdot \frac{1}{|\xi_2|^2} \cdot (2\eta + 1) 
		= (2\eta + 1) |\xi|^{2 - 2}.
	\end{align*}
	This shows that $p(D)$ is not GH-$r$ for any $r < 2$. Combining this with the lower bound, we conclude:
	\[
	\operatorname{ind}_{GH}(p(D)) = 2.
	\]
	
	If $\eta$ is transcendental, similar arguments to those in the previous section, along with a repetition of the ideas above imply that
	\[
	\operatorname{ind}_{GH}(p(D)) = \mu(\eta),
	\]
	where $\mu(\eta)$ denotes the irrationality measure of $\eta$.

	Next, consider the case where $\eta = a/b \in \mathbb{Q}_+$ (with $a, b \in \mathbb{N}$). The symbol of the operator is given by:
	\[
	|p(\xi)| = |-\xi_1^2 + (a/b)^2 \xi_2^2| = \frac{1}{b} \big||b\xi_1| - a|\xi_2|\big| \cdot \big||\xi_1| + (a/b)|\xi_2|\big|.
	\]
	
	For every $\xi \in \mathbb{Z}^2$ such that $p(\xi) \neq 0$, we have $b|\xi_1| - a|\xi_2| \in \mathbb{Z} \setminus \{0\}$, and thus:
	\[
	|p(\xi)| \geq \frac{1}{b} \min\{1, a/b\} |\xi|^{2-1}.
	\]
	This implies that $p(D)$ is GS-$1$.
	
	To verify that $\operatorname{ind}_{GS}(p(D)) = 1$, consider the sequence of integer pairs:
	\[
	\xi_{1,j} = a(j+1), \quad \xi_{2,j} = bj, \quad \text{for } j \in \mathbb{N}.
	\]
	
	For this sequence, we have:
	\[
	|p(\xi_{1,j}, \xi_{2,j})| = |-a^2(j+1)^2 + a^2j^2| = a(2j+1).
	\]
	
	Moreover, observe that:
	\[
	|\xi_{1,j}| + |\xi_{2,j}| = (a+b)j + a \geq 2j+1, \quad \text{for all } j \in \mathbb{N}.
	\]
	
	Thus, we have:
	\[
	0 < |p(\xi_{1,j}, \xi_{2,j})| \leq a(|\xi_{1,j}| + |\xi_{2,j}|)^{2-1}, \quad \text{for all } j \in \mathbb{N}.
	\]
	
	By Theorem \ref{theoGSindex}, it follows that 
	\[\operatorname{ind}_{GS}(p(D)) = 1.\]
	
	In conclusion, we obtain the following complete characterization of the hypoellipticity and solvability indices  for the wave operator on $\mathbb{T}^2$.

	\begin{theorem}\label{thm-wave-n=2}
		Let $\eta > 0$. The wave operator $p(D) = \partial_{x_1}^2 - \eta^2 \partial_{x_2}^2$ on $\mathbb{T}^2$ satisfies the following properties:
		\begin{itemize}
			\item[{\it (i)}] If $\eta$ is irrational, then $\operatorname{ind}_{GH}(p(D)) = \mu(\eta)$.
			
			\item[{\it (ii)}] If $\eta \in \mathbb{Q}$, then $\operatorname{ind}_{GH}(p(D)) = \infty$.
			
			\item[{\it (iii)}] $\operatorname{ind}_{GS}(p(D)) = \mu(\eta)$. Moreover, if $\eta \in \mathbb{Q}$, then $p(D)$ is GS-$1$.
			
			\item[{\it (iv)}] If $\eta$ is an algebraic number of degree $2$, then $p(D)$ is GH-$2$ and GS-$2$.
		\end{itemize}
	\end{theorem}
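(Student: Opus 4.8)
The plan is to assemble the symbol estimates obtained in the discussion preceding the statement and feed them into the general machinery developed above: Theorems~\ref{theoGH}, \ref{theoGHindex}, \ref{theoGS}, and \ref{theoGSindex}, together with Corollaries~\ref{coroGH}, \ref{coroGS1}, and \ref{coroGHimpliesGS}. The single algebraic identity that drives everything is the factorization
\[
|p(\xi)| = \big||\xi_1| - \eta|\xi_2|\big| \cdot \big(|\xi_1| + \eta|\xi_2|\big), \qquad \xi = (\xi_1,\xi_2) \in \mathbb{Z}^2,
\]
in which the second factor always satisfies $|\xi_1| + \eta|\xi_2| \geq \min\{1,\eta\}\,|\xi|$, while for $\xi_2 \neq 0$ the first factor equals $|\xi_2|\cdot\big|\eta - |\xi_1|/|\xi_2|\big|$ and therefore measures exactly how well $\eta$ is approximated by the positive rationals $|\xi_1|/|\xi_2|$. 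Recall also that $p(D)$ has intrinsic order $m = 2$, so the exponent appearing in Theorems~\ref{theoGH} and \ref{theoGS} is $m - r = 2 - r$.

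Item {\it (ii)} is immediate: if $\eta = a/b \in \mathbb{Q}$, then $p$ vanishes on the infinite set $\{b\xi_1 = \pm a\xi_2\}$, so condition {\it (i)} of Theorem~\ref{theoGH} fails and $\operatorname{ind}_{GH}(p(D)) = \infty$. For item {\it (i)}, with $\eta$ irrational we have $p(\xi) = 0$ only at $\xi = 0$, so everything reduces to the size of $|p(\xi)|$. Using the characterization of the irrationality measure recalled just before Proposition~\ref{Thm-irrational-measure}: for each $\varepsilon > 0$ there is $A_\varepsilon > 0$ with $\big|\eta - \xi_1/\xi_2\big| \geq A_\varepsilon |\xi_2|^{-(\mu(\eta)+\varepsilon)}$, which via the factorization and $|\xi| \geq |\xi_2|$ yields $|p(\xi)| \geq C_\varepsilon |\xi|^{2-(\mu(\eta)+\varepsilon)}$ on $\mathbb{Z}^2 \setminus \{0\}$; by Theorem~\ref{theoGH} this makes $p(D)$ GH-$(\mu(\eta)+\varepsilon)$, whence $\operatorname{ind}_{GH}(p(D)) \leq \mu(\eta)$. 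Conversely, for $0 < \varepsilon < \mu(\eta)$ there are infinitely many $\xi_1/\xi_2$ with $\big|\eta - \xi_1/\xi_2\big| < |\xi_2|^{-(\mu(\eta)-\varepsilon)}$; along such a sequence $|\xi_1| \leq (\eta+1)|\xi_2|$ for large $\xi_2$, so the factorization gives $0 < |p(\xi)| \leq C' |\xi|^{2-(\mu(\eta)-\varepsilon)}$ for infinitely many $\xi$, and Corollary~\ref{coroGH} forces $\operatorname{ind}_{GH}(p(D)) \geq \mu(\eta) - \varepsilon$, hence $\geq \mu(\eta)$. The Liouville case $\mu(\eta) = \infty$ is covered by the same estimate, since then $p(D)$ fails GH-$r$ for every $r \geq 0$.

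For item {\it (iii)}, if $\eta$ is irrational then $p$ vanishes only at $\xi = 0$, so Corollary~\ref{coroGHimpliesGS}{\it (ii)} transfers {\it (i)} verbatim to solvability: $\operatorname{ind}_{GS}(p(D)) = \operatorname{ind}_{GH}(p(D)) = \mu(\eta)$ (the value $\infty$ included). If $\eta = a/b \in \mathbb{Q}$, then $\mu(\eta) = 1$ by Remark~\ref{measure-remark}, and both bounds are checked directly. For the upper bound, whenever $p(\xi) \neq 0$ the integer $b|\xi_1| - a|\xi_2|$ is nonzero, so $\big||\xi_1| - \eta|\xi_2|\big| \geq 1/b$; combined with $|\xi_1| + \eta|\xi_2| \geq \min\{1,a/b\}|\xi|$ this gives $|p(\xi)| \geq c\,|\xi|^{2-1}$, so $p(D)$ is GS-$1$ by Theorem~\ref{theoGS} and $\operatorname{ind}_{GS}(p(D)) \leq 1$. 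For the lower bound, the explicit sequence $\xi_{1,j} = a(j+1)$, $\xi_{2,j} = bj$ makes both $|p(\xi_j)|$ and $|\xi_j|$ grow linearly in $j$, so $0 < |p(\xi_j)| \leq K|\xi_j|^{2-1}$ for infinitely many $j$, and Corollary~\ref{coroGS1} forces $\operatorname{ind}_{GS}(p(D)) \geq 1$; thus $\operatorname{ind}_{GS}(p(D)) = 1 = \mu(a/b)$.

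Item {\it (iv)} is the endpoint statement, and it is the step needing the most care. Roth's theorem (Theorem~\ref{Roth-Thm}) gives only $\big|\eta - \xi_1/\xi_2\big| \geq A_\varepsilon |\xi_2|^{-(2+\varepsilon)}$ and hence GH-$(2+\varepsilon)$, which is \emph{not} enough to conclude GH-$2$; to reach exactly $r = 2$ one must invoke Liouville's theorem in its sharp form for a quadratic irrational, namely $\big|\eta - \xi_1/\xi_2\big| \geq K |\xi_2|^{-2}$ with no loss of $\varepsilon$. Substituting this into the factorization gives, for $\xi_2 \neq 0$, the bound $|p(\xi)| \geq (K/|\xi_2|)\cdot\min\{1,\eta\}|\xi| \geq K\min\{1,\eta\}$ using $|\xi| \geq |\xi_2|$, while for $\xi_2 = 0$ one simply has $|p(\xi)| = |\xi_1|^2 \geq 1$; hence $|p(\xi)|$ is bounded below by a positive constant on all of $\mathbb{Z}^2 \setminus \{0\}$, which (since $m - r = 0$) is exactly the hypothesis of Theorem~\ref{theoGH} for $r = 2$. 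Therefore $p(D)$ is GH-$2$, and Corollary~\ref{coroGHimpliesGS}{\it (i)} upgrades this to GS-$2$. The main obstacle throughout is precisely this endpoint bookkeeping in {\it (iv)}: one has to make sure that the factor $|\xi_1| + \eta|\xi_2|$ contributes exactly the power of $|\xi|$ needed to absorb the $|\xi_2|^{-1}$ produced by the quadratic Liouville estimate, so that the exponent lands on $2$ and not on $2 + \varepsilon$.
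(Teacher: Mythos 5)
Your proposal is correct and follows essentially the same route as the paper: factorizing the symbol as $\big||\xi_1|-\eta|\xi_2|\big|\cdot\big(|\xi_1|+\eta|\xi_2|\big)$, feeding irrationality-measure/Roth-type lower bounds and Dirichlet-type (or definition-of-$\mu$) upper counterexamples into Theorems \ref{theoGH}, \ref{theoGHindex}, \ref{theoGS}, \ref{theoGSindex} and Corollaries \ref{coroGH}, \ref{coroGS1}, \ref{coroGHimpliesGS}, and using the explicit sequence $\xi_{1,j}=a(j+1)$, $\xi_{2,j}=bj$ in the rational case. Your explicit invocation of Liouville's sharp quadratic estimate for the endpoint claim {\it (iv)} is precisely the ingredient the paper relies on (it is carried out in Section 4 for the vector field) but leaves implicit in this section, so the argument is complete.
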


%=================================
\subsection{Wave operator on $\T^{n+1}$} \
%=================================

	We now turn our attention to the wave operator on the $(n+1)$-dimensional torus $\mathbb{T}^1 \times \mathbb{T}^n$, where $n \in \mathbb{N}$. The operator is given by:
	\[
	p(D) = \partial_{x_1}^2 - \eta^2 \Delta_{\mathbb{T}^n}, 
	\]
	where $\eta > 0$. Its symbol is:
	\[
	p(\xi) = -\xi_1^2 + \eta^2 \|\xi'\|^2, \quad \xi = (\xi_1, \xi') \in \mathbb{Z}^{n+1}.
	\]
	As Before, $p(D)$ has (intrinsic) order $2$.
	
	Unlike the two-dimensional case, the analysis here requires additional care because the square root of the symbol of $\Delta_{\mathbb{T}^n}$ is not always an integer. Consequently, the techniques used in the two-dimensional setting cannot be applied directly. Nevertheless, we are still able to adapt similar ideas to derive lower and upper bounds for the global hypoellipticity and solvability indices, as detailed below.

	First, consider the case where $\eta^2$ is irrational. In this case, the symbol $p(\xi)$ vanishes only when $\xi = 0$. 
	
	Additionally, observe that:
	\[
	|p(\xi)| = \|\xi'\|^2 \left| \frac{\xi_1^2}{\|\xi'\|^2} - \eta^2 \right|.
	\]
	For $\xi' \neq 0$, we can apply the definition of the irrationality measure $\mu(\eta^2)$ to estimate the term $\left| \frac{\xi_1^2}{\|\xi'\|^2} - \eta^2 \right|$. Specifically, for every $\varepsilon > 0$, there exists a constant $A_\varepsilon > 0$ such that:
	\[
	\left| \frac{\xi_1^2}{\|\xi'\|^2} - \eta^2 \right| \geq \frac{A_\varepsilon}{\|\xi'\|^{2(\mu(\eta^2) + \varepsilon)}}.
	\]
	Using this inequality, we obtain:
	\begin{align*}
		|p(\xi)| &= \|\xi'\|^2 \left| \frac{\xi_1^2}{\|\xi'\|^2} - \eta^2 \right| 
		\geq \|\xi'\|^2 \cdot \frac{A_\varepsilon}{\|\xi'\|^{2(\mu(\eta^2) + \varepsilon)}} \\
		&= \frac{A_\varepsilon}{\|\xi'\|^{2\mu(\eta^2) - 2 + 2\varepsilon}} 
		\geq A'_\varepsilon |\xi|^{2 - (2\mu(\eta^2) + 2\varepsilon)},
	\end{align*}
	for all $\xi \in \mathbb{Z}^{n+1} \setminus \{0\}$ such that $\xi' \neq 0$, where $A'_\varepsilon > 0$ is a constant depending on $\varepsilon$.
	
	When $\xi' = 0$, we have:
	\[
	|p(\xi)| = |\xi_1|^2 = |\xi|^2 \geq |\xi|^{2 - (2\mu(\eta^2) + 2\varepsilon)},
	\]
	for all $\xi_1 \in \mathbb{Z}$. Combining both cases, it follows that:
	\[
	|p(\xi)| \geq A''_\varepsilon |\xi|^{2 - (2\mu(\eta^2) + 2\varepsilon)},
	\]
	for all $\xi \in \mathbb{Z}^{n+1} \setminus \{0\}$, where $A''_\varepsilon > 0$ is another constant depending on $\varepsilon$.

	Moreover, note that by our assumption, $\eta$ is also irrational. Therefore, by definition, given $\mu(\eta)\geq\varepsilon>0$ there exist infinitely many approximations $\xi_1/\xi_2\in\mathbb{Q}$ such that:
	\[
	\left| \frac{\xi_1}{\xi_2} - \eta \right| < \frac{1}{|\xi_2|^{\mu(\eta)-\varepsilon}}.
	\]

	For these approximations and $\xi = (\xi_1, \xi_2, 0, \dots, 0) \in \mathbb{Z}^{n+1}$, we have:
	\begin{align*}
		0 & < |p(\xi_1, \xi_2, 0, \dots, 0)| = |\xi_2|^2 \left| \frac{\xi_1}{\xi_2} - \eta \right| \left| \frac{\xi_1}{\xi_2} + \eta \right| \\
		&< |\xi_2|^2 \cdot \frac{1}{|\xi_2|^{\mu(\eta)-\varepsilon}} \cdot (2\eta + 1) 
		< (2\eta + 1)|\xi|^{2-(\mu(\eta)-\varepsilon)},
	\end{align*}
	where in the second inequality we used the fact that $\left| \xi_1/\xi_2 - \eta \right| < 1/|\xi_2|^{\mu(\eta)-\varepsilon}$ implies $\left| \xi_1/\xi_2 + \eta \right| \leq 2\eta + 1$.

    For these $\xi$, we conclude that:
	\[
	0 < |p(\xi)| < (2\eta + 1) |\xi|^{2-(\mu(\eta)-\varepsilon)}.
	\]
	
	Thus, by Theorem \ref{theoGHindex}, we conclude that:
	\[
	\mu(\eta) \leq \operatorname{ind}_{GH}(p(D)) \leq 2\mu(\eta^2).
	\]
    Note that this aligns with the fact that $\mu(\alpha^{1/2})\leq 2\mu(\alpha)$ for any irrational number $\alpha>0$ (\cite{Borwein}).
    
	Now assume that $\eta$ is irrational, but $\eta^2$ is rational. Then whether or not $p(D)$ is GH-$r$ for some $r \geq 0$ depends on the dimension $n$ and on the constant $\eta$. 
	
	Indeed, write $\eta = \sqrt{a/b}$, where $a, b \in \mathbb{N}$, $\gcd(a, b) = 1$. Then we consider the cases $n \geq 4$, $n = 3$, and $n = 2$ separately, as follows.
	
	\smallskip
	If $n \geq 4$, by Lagrange's four-square theorem, any non-negative integer can be written as the sum of four squares. For any $ab \xi_1 \in \mathbb{Z}$, choose $\xi_2, \dots, \xi_n \in \mathbb{Z}$ such that:
	\[
	\xi_2^2 + \dots + \xi_n^2 = ab \xi_1^2.
	\]
	Substituting into the symbol $p(\xi)$, we find:
	\[
	p(\xi) = -a^2 \xi_1^2 + \frac{a}{b} (\xi_2^2 + \dots + \xi_n^2) = -a^2 \xi_1^2 + \frac{a}{b} (ab \xi_1^2) = 0.
	\]
	Thus, $p(\xi) = 0$ for infinitely many $\xi \in \mathbb{Z}^{n+1}$. By Theorem \ref{theoGH}, $p(D)$ is not GH-$r$ for any $r \geq 0$.
	
	\smallskip
	If $n = 3$, then notice that $p(\xi) = 0$ if and only if:
	\[
	-a^2 \xi_1^2 + \frac{a}{b} (\xi_2^2 + \xi_3^2 + \xi_4^2) = 0,
	\]
	which simplifies to:
	\[
	|\xi_1| = \sqrt{\frac{a}{b} (\xi_2^2 + \xi_3^2 + \xi_4^2)}.
	\]
	
	For this equality to hold, $\frac{a}{b} (\xi_2^2 + \xi_3^2 + \xi_4^2)$ must be a perfect square. 
	
 If $\xi\neq 0$, writing $a = c^2 d$, where $d$ is square-free, we see that $(\xi_2, \xi_3, \xi_4) \in \mathbb{Z}^3$ must satisfy:
	\[
	\xi_2^2 + \xi_3^2 + \xi_4^2 = b d j^2, \quad \text{for some } j \in \mathbb{N}.
	\]
		
	By Legendre's three-square theorem, this is possible if and only if $b d j^2$ is not of the form $4^{\ell_1} (8 \ell_2 + 7)$, where $\ell_1, \ell_2 \in \mathbb{N}$. 
	
	We claim that there exist infinitely many $j \in \mathbb{N}$ such that $b d j^2$ satisfies this condition.
	
	To analyze this, write $b d = 4^\ell \lambda$, where $\lambda \in \mathbb{N}$ and $4 \nmid \lambda$. Then we have $b d j^2 = 4^{\ell_1} (8 \ell_2 + 7)$ for some $\ell_1, \ell_2 \in \mathbb{N}$ if and only if:
	\[
	\lambda j^2 = 4^{\ell'_1} (8 \ell_2 + 7), \quad \text{for some } \ell'_1, \ell_2 \in \mathbb{N}.
	\]

	Now, choose $j$ of the form $j = 2(2k + 1)$, where $k \in \mathbb{N}$. For this choice of $j$, the equality above is only possible if:
	\[
	\lambda j^2 = 4 (8 \ell_2 + 7), \quad \text{for some } \ell_2 \in \mathbb{N},
	\]
	because $4 \nmid \lambda$, $4\mid j^2$ and $4^2 \nmid j^2$. Notice that this implies:
	\[
	\lambda j^2 \equiv 3 \pmod{8}.
	\]
	However, since $j^2 \equiv 4 \pmod{8}$, we have:
	\[
	\lambda j^2 \equiv 0 \text{ or } 4 \pmod{8},
	\]
	a contradiction. 
	
	Thus, $b d j^2$ is not of the form $4^{\ell_1} (8 \ell_2 + 7)$, where $\ell_1, \ell_2 \in \mathbb{N}$.
	
	Consequently, there exist $\xi_2, \xi_3, \xi_4 \in \mathbb{Z}$ such that:
	\[
	\xi_2^2 + \xi_3^2 + \xi_4^2 = b d j^2,
	\]
	and hence:
	\[
	\sqrt{\frac{a}{b} (\xi_2^2 + \xi_3^2 + \xi_4^2)} \quad \text{is an integer.}
	\]
	
	Choosing $|\xi_1| = ( a/b (\xi_2^2 + \xi_3^2 + \xi_4^2))^{1/2}$, we obtain $\xi \in \mathbb{Z}^4$ such that $p(\xi) = 0$. 
	
	Since there are infinitely many $j \in \mathbb{N}$ of the form $2(2k + 1)$, where $k \in \mathbb{N}$, it follows that $p(\xi) = 0$ for infinitely many $\xi$. By Theorem \ref{theoGH}, we conclude that $p(D)$ is not GH-$r$ for any $r \geq 0$.

	\smallskip
	If $n = 2$, then notice that $p(\xi) = -a^2 \xi_1^2 + \frac{a}{b} (\xi_2^2 + \xi_3^2) = 0$ if and only if:
	\[
	|\xi_1| = \sqrt{\frac{a}{b} (\xi_2^2 + \xi_3^2)}.
	\]
	
	This implies that $ \frac{a}{b} (\xi_2^2 + \xi_3^2)$ must be a perfect square. Writing $a = c^2 d$, where $d$ is square-free, we see that for $p(\xi)$ to vanish,  either $\xi=0$ or $(\xi_2, \xi_3) \in \mathbb{Z}^2$ must satisfy:
	\[
	\xi_2^2 + \xi_3^2 = b d j^2, \quad \text{for some } j \in \mathbb{Z}.
	\]
	
	By the sum of two squares theorem, this is possible if and only if $b d j^2$ contains no prime factor $q^k$, where $q \equiv 3 \pmod{4}$ and $k$ is odd. Since $j^2$ can only contain even powers of primes, and $\gcd(b, d) = 1$, it follows that if neither $b$ nor $d$ (and consequently $a$) contains such a prime factor, then $b d j^2$ also contains no such prime factor.
	
	In conclusion, if neither $a$ nor $b$ contains a prime factor $q^k$, where $q \equiv 3 \pmod{4}$ and $k$ is odd, then $p(D)$ is not GH-$r$ for any $r \geq 0$, since its symbol has infinitely many zeros.

	On the other hand, if either $a$ or $b$ contains a prime factor $q^k$, where $q \equiv 3 \pmod{4}$ and $k$ is odd, then $b d j^2$ also contains such a prime factor. This follows because $\gcd(a, b) = 1$, so $a$ and $b$ share no common prime factors, and since $d$ is the square-free part of $a$, any odd-powered prime factor $q^k$ of $a$ will appear as a factor $q$ in $d$. 
	
	Consequently, $b d j^2$ cannot be written as a sum of two squares for any $j \in \mathbb{N}$. Thus, $p(\xi) = 0$ only if $\xi = 0$.
	
	Moreover, 
	\begin{align*}
		|p(\xi)| &= \frac{1}{\sqrt{b}} \left| \sqrt{b |\xi_1|^2} - \sqrt{a (\xi_2^2 + \xi_3^2)} \right| \cdot \left| |\xi_1| + \sqrt{\frac{a}{b} (\xi_2^2 + \xi_3^2)} \right| \\
		&= \frac{1}{\sqrt{b}} \left| \frac{b |\xi_1|^2 - a (\xi_2^2 + \xi_3^2)}{\sqrt{b |\xi_1|^2} + \sqrt{a (\xi_2^2 + \xi_3^2)}} \right| \cdot \left| |\xi_1| + \sqrt{\frac{a}{b} (\xi_2^2 + \xi_3^2)} \right|.
	\end{align*}
	Using the inequality $\sqrt{a} + \sqrt{b} \leq \sqrt{2} \sqrt{a + b}$ (valid for $a, b > 0$) and the fact that $b |\xi_1|^2 - a (\xi_2^2 + \xi_3^2) \in \mathbb{Z}$, we obtain:
	\begin{align*}
		|p(\xi)| &\geq \frac{1}{\sqrt{b}} \cdot \frac{1}{\sqrt{2 \min\{a, b\}} \sqrt{|\xi_1|^2 + |\xi_2|^2 + |\xi_3|^2}} \cdot \min\left\{1, \frac{\sqrt{a/b}}{2}\right\} |\xi| \\
		&\geq K |\xi|^{2-2},
	\end{align*}
	for all $(\xi_1, \xi_2, \xi_3) \in \mathbb{Z}^3 \setminus \{0\}$, where $K > 0$ depends only on $a$ and $b$.
	
	By Theorem \ref{theoGH}, it follows that $p(D)$ is GH-$2$. Since $\eta$ is irrational and $\mu(\eta)=2$ (since it is algebraic), we can repeat the argument from the case where $\eta^2$ was irrational to conclude that $p(D)$ is not GH-$r$ for any $r < 2$. Therefore, in this case:
	\[
	\mu(\eta)=\operatorname{ind}_{GH}(p(D)) = \mu(\eta^2)=2.
	\]

	Notice that the arguments above can also be applied to the cases $n \geq 4$ and $n = 3$. Specifically, whenever $p(\xi) \neq 0$, we have:
	\[
	|p(\xi)| \geq K |\xi|^{2-2}, \quad \text{for some } K > 0.
	\]
	
	Thus, by Theorem \ref{theoGS}, it follows that $p(D)$ is GS-$2$. Moreover, the same sequence used to prove that $p(D)$ is not GH-$r$ for any $r < 2$ can also be employed to conclude that $p(D)$ is not GS-$r$ for any $r < 2$. Therefore:
	\[
	\operatorname{ind}_{GS}(p(D)) = 2, \quad \text{if } n \geq 4 \text{ or } n = 3.
	\]
	
	Finally, let us consider the case where $\eta = a/b \in \mathbb{Q}_+$, with $a, b \geq 1$. Then:
	\[
	|p(\xi)| = \frac{1}{b^2} |b^2 \xi_1^2 - a^2 \|\xi'\|^2| \geq \frac{1}{b^2} = \frac{1}{b^2} |\xi|^{2-2},
	\]
	whenever $p(\xi) \neq 0$, since $|b^2 \xi_1^2 - a^2 \|\xi'\|^2|$ is an integer for every $\xi \in \mathbb{Z}^{n+1}$. By Theorem \ref{theoGS}, we conclude that $p(D)$ is GS-$2$.
	
	On the other hand, consider the sequence of vectors $(\xi_{1, j},\ldots, \xi_{n+1, j} )_{j \in \mathbb{N}}$ in \( \mathbb{Z}^{n+1}\) given by:
	\[
	\begin{cases}
		\xi_{1,j} = a(j+1), \\
		\xi_{2,j} = bj, \\
		\xi_{\ell,j} = 0, \text{ for } 3 \leq \ell \leq n+1.
	\end{cases}
	\]
	for every $j \in \mathbb{N}$. For this sequence, we have:
	\[
	|p(\xi_{1, j},\ldots, \xi_{n+1, j}) | = |-a^2 (j+1)^2 + a^2 j^2| = a^2(2j+1),
	\]
	for every $j \in \mathbb{N}$. Moreover, observe that:
	\[
	|(\xi_{1, j},\ldots, \xi_{n+1, j})| = |\xi_{1,j}| + |\xi_{2,j}| = (a+b)j + a \geq 2j+1,
	\]
	for every $j \in \mathbb{N}$. 
	
	Hence:
	\[
	0 < |p(\xi_{1, j},\ldots, \xi_{n+1, j})| \leq a^2 |(\xi_{1, j},\ldots, \xi_{n+1, j})|^{2-1},
	\]
	for every $j \in \mathbb{N}$. By Theorem \ref{theoGSindex}, we conclude that:
	\[
	1 \leq \operatorname{ind}_{GS}(p(D)) \leq 2.
	\]

	As before, we conclude with the following complete characterization of the hypoellipticity and solvability indices for the wave operator on $\mathbb{T}^{n+1}$.

\begin{theorem}\label{theo-wave-n+1}
    Let $\eta >0$. The wave operator $p(D)=\partial_{x_1}^2-\eta^2\Delta_{\T^n}$ on $\T^1\times\T^n$ satisfies the following properties:
    \begin{itemize}
        \item[{\it (i)}] If $\eta,\eta^2\not\in\mathbb{Q}$, then 
        $\mu(\eta)\leq \operatorname{ind}_{GH}(p(D))\leq 2\mu(\eta^2)$. 
        \item[{\it (ii)}] If $\eta\not\in\mathbb{Q}$, but $\eta^2=\frac{a}{b}\in\mathbb{Q}$, $\gcd(a,b)=1$, then $\operatorname{ind}_{GS}(p(D))=2$ and: 
        \begin{itemize}
            \item If $n\geq 3$, then  $\operatorname{ind}_{GH}(p(D))=\infty$.
            \item If $n=2$ and either $a$ or $b$ contain a prime factor $q^k$, where $q\equiv 3 \pmod{4}$ and $k$ is odd, then $\operatorname{ind}_{GH}(p(D))=2$ and $p(D)$ is GH-$2$. Otherwise, $\operatorname{ind}_{GH}(p(D))=\infty$
            \item If $n=2$
        \end{itemize}
          \item[{\it (iii)}] If $\eta\in\mathbb{Q}$, then $\operatorname{ind}_{GH}(p(D))=\infty$ and $1\leq\operatorname{ind}_{GS}(p(D))\leq 2$.
    \end{itemize}
\end{theorem}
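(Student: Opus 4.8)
The plan is to assemble the theorem from the symbol estimates for $p(\xi) = -\xi_1^2 + \eta^2\|\xi'\|^2$ developed in the discussion preceding the statement, invoking Theorems \ref{theoGH}, \ref{theoGHindex}, \ref{theoGS} and \ref{theoGSindex} together with Corollaries \ref{coroGH} and \ref{coroGS1}. In all three items the intrinsic order is $m = 2$, so a loss of $r$ derivatives corresponds to a bound of the shape $|p(\xi)| \geq K|\xi|^{2-r}$, and the three cases are distinguished by whether $\eta$, respectively $\eta^2$, is rational.

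For item (i) one first observes that, since $\eta^2$ is irrational, the symbol vanishes only at $\xi = 0$, so by Theorem \ref{theoGH} only the lower bound on $|p(\xi)|$ is at issue. For $\xi' \neq 0$ I would write $|p(\xi)| = \|\xi'\|^2\, \big| \xi_1^2/\|\xi'\|^2 - \eta^2 \big|$ and apply the characterization of the irrationality measure $\mu(\eta^2)$ to the rational number $\xi_1^2/\|\xi'\|^2$, obtaining for each $\varepsilon > 0$ a constant $A_\varepsilon'' > 0$ with $|p(\xi)| \geq A_\varepsilon''|\xi|^{2 - (2\mu(\eta^2) + 2\varepsilon)}$ for every $\xi \neq 0$ (the case $\xi' = 0$ being immediate). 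Theorem \ref{theoGH} then gives GH-$r$ for every $r > 2\mu(\eta^2)$, hence $\operatorname{ind}_{GH}(p(D)) \leq 2\mu(\eta^2)$. For the matching lower bound I would restrict to frequencies $\xi = (\xi_1, \xi_2, 0, \dots, 0)$, insert the rational approximations to $\eta$ with exponent $\mu(\eta) - \varepsilon$ (which exist by definition of $\mu(\eta)$) into the factorization $|p(\xi)| = |\xi_2|^2\,|\xi_1/\xi_2 - \eta|\,|\xi_1/\xi_2 + \eta|$, and produce infinitely many $\xi$ with $0 < |p(\xi)| < (2\eta + 1)|\xi|^{2 - (\mu(\eta) - \varepsilon)}$; Corollary \ref{coroGH} then forces $\operatorname{ind}_{GH}(p(D)) \geq \mu(\eta)$.

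For item (ii), write $\eta^2 = a/b$ in lowest terms; here $\eta$ is an algebraic irrational of degree two, so $\mu(\eta) = 2$. The solvability index equals $2$ independently of $n$: when $p(\xi) \neq 0$ the integer $a\|\xi'\|^2 - b\xi_1^2$ is nonzero, so $|p(\xi)| = b^{-1}|a\|\xi'\|^2 - b\xi_1^2| \geq b^{-1}|\xi|^{2-2}$, which by Theorem \ref{theoGS} gives GS-$2$; and the Dirichlet-type sequence from item (i), now with $\mu(\eta) = 2$, supplies infinitely many $\xi$ with $0 < |p(\xi)| \leq K'|\xi|^{2 - (2 - \varepsilon)}$, so Corollary \ref{coroGS1} gives $\operatorname{ind}_{GS}(p(D)) \geq 2$. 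The hypoellipticity dichotomy is then decided by the arithmetic of the zero set $\{\,|\xi_1|^2 = (a/b)\|\xi'\|^2\,\}$: for $n \geq 4$ Lagrange's four-square theorem, for $n = 3$ Legendre's three-square theorem, and for $n = 2$ the sum-of-two-squares theorem determine whether this set is infinite. When it is, Theorem \ref{theoGH} gives $\operatorname{ind}_{GH}(p(D)) = \infty$; when $n = 2$ and the numerator or denominator of $\eta^2$ carries a prime power $q^k$ with $q \equiv 3 \pmod{4}$ and $k$ odd, the set reduces to $\{0\}$, and rationalizing $|p(\xi)|$ --- using $\sqrt{u} + \sqrt{v} \leq \sqrt{2}\sqrt{u+v}$ and the integrality of $b\xi_1^2 - a\|\xi'\|^2$ --- yields $|p(\xi)| \geq K|\xi|^{2-2}$, hence GH-$2$ by Theorem \ref{theoGH}, while the Dirichlet sequence again rules out GH-$r$ for $r < 2$, so $\operatorname{ind}_{GH}(p(D)) = 2$.

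Item (iii) is the quickest: for $\eta = a/b \in \mathbb{Q}_+$ the symbol vanishes on the infinite set $\{\,b\xi_1 = \pm a\xi_2,\ \xi_3 = \dots = \xi_{n+1} = 0\,\}$, so Theorem \ref{theoGH} gives $\operatorname{ind}_{GH}(p(D)) = \infty$; meanwhile $|p(\xi)| = b^{-2}|a^2\|\xi'\|^2 - b^2\xi_1^2| \geq b^{-2}|\xi|^{2-2}$ whenever $p(\xi) \neq 0$ gives GS-$2$ by Theorem \ref{theoGS}, while the explicit sequence $\xi_{1,j} = a(j+1)$, $\xi_{2,j} = bj$, $\xi_{\ell,j} = 0$ for $\ell \geq 3$ satisfies $0 < |p(\xi_j)| = a^2(2j+1) \leq a^2|\xi_j|^{2-1}$, so Corollary \ref{coroGS1} gives $\operatorname{ind}_{GS}(p(D)) \geq 1$; altogether $1 \leq \operatorname{ind}_{GS}(p(D)) \leq 2$. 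I expect the main obstacle to lie not in any individual inequality but in the number-theoretic bookkeeping of the $n = 2$ and $n = 3$ subcases of item (ii): one must pass correctly to the square-free part of $a$ and track the mod-$4$ and mod-$8$ conditions governing representability as a sum of two or three squares, and then confirm that when an obstruction is present it affects every nonzero frequency (so the zero set is $\{0\}$), whereas when it is absent infinitely many frequencies are annihilated.
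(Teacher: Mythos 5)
Your plan retraces the paper's own proof almost verbatim: the same factorizations of $p(\xi)$, the $\mu(\eta^2)$-based lower bound and the $(\xi_1,\xi_2,0,\dots,0)$ approximation sequence in (i), the integrality estimate off the zero set for GS-$2$ in (ii) and (iii), the sequence $\xi_{1,j}=a(j+1)$, $\xi_{2,j}=bj$ for the solvability lower bound, and Lagrange/Legendre/two-squares for the zero set in (ii). Items (i), (iii) and the solvability claim of (ii) are complete as written; your direct bound $b\,|p(\xi)|=\bigl|\,a\|\xi'\|^2-b\xi_1^2\,\bigr|\ge 1$ whenever $p(\xi)\neq 0$ is in fact tidier than the paper's rationalization via $\sqrt{u}+\sqrt{v}\le\sqrt{2}\sqrt{u+v}$ in the $n=2$ case, and reaches the same conclusion.

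The genuine gap is in the hypoellipticity part of (ii). The theorem asserts unconditionally that $\operatorname{ind}_{GH}(p(D))=\infty$ when $n=3$, but your outline only says that Legendre's theorem ``determines whether'' the zero set is infinite and defers the verification; that verification is exactly the nontrivial content. One must show that for every reduced fraction $\eta^2=a/b$, writing $a=c^2d$ with $d$ square-free, there are infinitely many $j\in\mathbb{N}$ for which $bdj^2$ is a sum of three squares, i.e.\ not of the form $4^{\ell_1}(8\ell_2+7)$; the paper does this by choosing $j=2(2k+1)$ and comparing the exact power of $2$ and residues modulo $8$, after which $\xi_1=cdj$ gives infinitely many zeros of the symbol. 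Likewise, for $n=2$ with the obstruction present you assert that the zero set ``reduces to $\{0\}$''; this needs the short argument that, because $\gcd(a,b)=1$ and $d$ is the square-free part of $a$, an odd-power prime $q\equiv 3\pmod 4$ dividing $a$ or $b$ divides $bdj^2$ to an odd power for every $j$, so $bdj^2$ is never a sum of two squares. Until these two arithmetic verifications are supplied, the $n\ge 3$ conclusion $\operatorname{ind}_{GH}(p(D))=\infty$ and the $n=2$ dichotomy are stated but not proved; everything else in your proposal coincides with the paper's argument.
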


\begin{corollary}
     Let $\eta >0$. The wave operator $p(D)=\partial_{x_1}^2-\eta^2\Delta_{\T^n}$ on $\T^1\times\T^n$ satisfies  $\mu(\eta)\leq \operatorname{ind}_{GS}(p(D))\leq 2\mu(\eta^2)$, and it is not globally hypoelliptic if and only if $\eta$ satisfies any of the conditions below:
     \begin{itemize}
         \item[{\it (i)}] $\eta\in\mathbb{Q}$.
         \item[{\it (ii)}] $\eta\not\in\mathbb{Q}$, but $\eta^2$ and either $n\geq 3$ or $n=2$ and neither $a$ nor $b$ contain a prime factor $q^k$, where $q\equiv 3 \pmod 4$ and $k$ is odd.
     \end{itemize}
     Moreover, whenever $p(D)$ is globally hypoelliptic, it satisfies \[\mu(\eta)\leq \operatorname{ind}_{GH}(p(D))\leq 2\mu(\eta^2).\]
\end{corollary}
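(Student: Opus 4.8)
The statement collects Theorems~\ref{thm-wave-n=2} and~\ref{theo-wave-n+1}, read through Corollary~\ref{coro_GH_equiv} (global hypoellipticity is equivalent to $\operatorname{ind}_{GH}(p(D))<\infty$) and Corollary~\ref{coroGHimpliesGS} ($\operatorname{ind}_{GS}(p(D))\le\operatorname{ind}_{GH}(p(D))$ in general, with equality once $\operatorname{ind}_{GH}(p(D))<\infty$), together with the elementary facts $\mu(q)=1$ for $q\in\mathbb{Q}$ and $\mu(\sqrt{a/b})=2$ whenever $\sqrt{a/b}$ is irrational (Remark~\ref{measure-remark}, via Roth's Theorem~\ref{Roth-Thm}). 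The plan is to organize the argument according to the rationality type of the pair $(\eta,\eta^2)$, which splits into three regimes: $\eta\in\mathbb{Q}$ (hence $\eta^2\in\mathbb{Q}$); $\eta\notin\mathbb{Q}$ but $\eta^2\in\mathbb{Q}$ (so that $\eta=\sqrt{a/b}$ is algebraic of degree $2$); and $\eta,\eta^2\notin\mathbb{Q}$.

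I would begin with the solvability bounds $\mu(\eta)\le\operatorname{ind}_{GS}(p(D))\le 2\mu(\eta^2)$. If $\eta\in\mathbb{Q}$, then $\mu(\eta)=\mu(\eta^2)=1$ and Theorem~\ref{theo-wave-n+1}(iii) (or Theorem~\ref{thm-wave-n=2}(iii) when $n=1$) gives $1\le\operatorname{ind}_{GS}(p(D))\le 2$, which is exactly the claimed inequality. If $\eta\notin\mathbb{Q}$ but $\eta^2\in\mathbb{Q}$, then $\mu(\eta)=2$ and $\mu(\eta^2)=1$, while Theorem~\ref{theo-wave-n+1}(ii) (or Theorem~\ref{thm-wave-n=2}(iii)) gives $\operatorname{ind}_{GS}(p(D))=2$, so all three quantities coincide. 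If $\eta,\eta^2\notin\mathbb{Q}$, the upper bound in Theorem~\ref{theo-wave-n+1}(i) together with $\operatorname{ind}_{GS}(p(D))\le\operatorname{ind}_{GH}(p(D))$ (Corollary~\ref{coroGHimpliesGS}(i)) gives $\operatorname{ind}_{GS}(p(D))\le 2\mu(\eta^2)$, and for the matching lower bound I would reuse the frequencies $\xi=(\xi_1,\xi_2,0,\dots,0)$ produced in the proof of Theorem~\ref{theo-wave-n+1}(i), along which $0<|p(\xi)|<(2\eta+1)|\xi|^{2-(\mu(\eta)-\varepsilon)}$ for infinitely many $\xi$ and every small $\varepsilon>0$, so that Corollary~\ref{coroGS1} forces $\operatorname{ind}_{GS}(p(D))\ge\mu(\eta)-\varepsilon$, hence $\ge\mu(\eta)$. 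The final clause of the corollary then comes for free: if $p(D)$ is globally hypoelliptic, Corollary~\ref{coro_GH_equiv} makes $\operatorname{ind}_{GH}(p(D))$ finite, Corollary~\ref{coroGHimpliesGS}(iii) identifies it with $\operatorname{ind}_{GS}(p(D))$, and the same three-regime reading of the $\operatorname{ind}_{GH}$ conclusions of Theorems~\ref{thm-wave-n=2} and~\ref{theo-wave-n+1} gives $\mu(\eta)\le\operatorname{ind}_{GH}(p(D))\le 2\mu(\eta^2)$.

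Next I would treat the characterization of the failure of global hypoellipticity, which by Corollary~\ref{coro_GH_equiv} means $\operatorname{ind}_{GH}(p(D))=\infty$. When $\eta^2\notin\mathbb{Q}$, the symbol $p(\xi)=-\xi_1^2+\eta^2\|\xi'\|^2$ vanishes only at $\xi=0$, and the estimate $|p(\xi)|\ge A_\varepsilon|\xi|^{2-(2\mu(\eta^2)+2\varepsilon)}$ from the proof of Theorem~\ref{theo-wave-n+1}(i) combines with the Greenfield--Wallach criterion (Theorem~\ref{theoGW}) to force $\operatorname{ind}_{GH}(p(D))<\infty$; thus failure of global hypoellipticity requires $\eta^2\in\mathbb{Q}$. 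When $\eta^2\in\mathbb{Q}$, one reads off from Theorem~\ref{theo-wave-n+1}(iii) (the subcase $\eta\in\mathbb{Q}$) and Theorem~\ref{theo-wave-n+1}(ii) (the subcase $\eta=\sqrt{a/b}\notin\mathbb{Q}$) that $\operatorname{ind}_{GH}(p(D))=\infty$ precisely when $\eta\in\mathbb{Q}$, or $n\ge 3$, or $n=2$ with neither the numerator nor the denominator of $\eta^2$ carrying a prime $q\equiv 3\pmod 4$ to an odd power --- these being conditions~(i),~(ii),~(iii) of the corollary. In every remaining subcase ($n=1$, or $n=2$ with such a prime present) Theorems~\ref{thm-wave-n=2}(iv) and~\ref{theo-wave-n+1}(ii) give $\operatorname{ind}_{GH}(p(D))=2<\infty$, so $p(D)$ is globally hypoelliptic. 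This exhausts all cases and proves the equivalence.

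Since all of the analytic estimates and the Lagrange/Legendre/two-square number-theoretic arguments have already been carried out inside Theorems~\ref{thm-wave-n=2} and~\ref{theo-wave-n+1}, the actual content of this proof is the bookkeeping: matching each pair $(\eta,n)$ to the correct branch and checking that the mixed-rationality regimes collapse to the stated inequalities via $\mu(q)=1$ and $\mu(\sqrt{a/b})=2$. The one step I expect to require genuine care is the regime $\eta^2\notin\mathbb{Q}$: there one must make sure that finiteness of the zero set of $p$ is indeed accompanied by a genuine polynomial lower bound on $|p(\xi)|$ --- equivalently, that $\mu(\eta^2)<\infty$ --- which is the implicit hypothesis under which Theorem~\ref{theo-wave-n+1}(i) delivers global hypoellipticity; this is the step I would state and justify most explicitly.
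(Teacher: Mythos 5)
Your proposal is correct and follows essentially the same route as the paper, which states this corollary without a separate argument precisely because it is the bookkeeping you describe: reading Theorems \ref{thm-wave-n=2} and \ref{theo-wave-n+1} through Corollaries \ref{coro_GH_equiv}, \ref{coroGHimpliesGS} and \ref{coroGS1}, together with $\mu(q)=1$ for rationals and $\mu(\sqrt{a/b})=2$ for quadratic irrationals. Your closing caveat --- that the implication ``$\eta^2\notin\mathbb{Q}$ $\Rightarrow$ globally hypoelliptic'' really needs $\mu(\eta^2)<\infty$ --- is well spotted; the paper leaves this hypothesis implicit in Theorem \ref{theo-wave-n+1}(i) as well.
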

\begin{remark}
    Notice that Theorems \ref{thm-wave-n=2} and \ref{theo-wave-n+1} can be seen as an extension of a result by Bergamasco and Zani in \cite{bergamasco-second-order}, where they proved that for $\eta>0$, the wave operator $\partial_{x_1}-\eta^2\partial_{x_2}$ on $\T^2$ is globally hypoelliptic if and only if $\eta$ is neither rational nor a Liouville number.
\end{remark} 
\begin{example}
    Let $\eta>0$ and $\lambda>0$ is a prime number and $k\in\N_0$, $k>\frac{n+1}{2}$. Then the wave equation $\partial_{x_1}^2u-\lambda\Delta_{\T^n}u=f\in C^{k}(\T^{n+1})$, admits solution in $C^{k-\lceil \frac{n}{2}\rceil-1}(\T^{n+1})$ for every $f$ satisfying  $\widehat{f}(0)=0$. Moreover, every solution to such equation is in $C^{k-2}(\T^{n+1})$,  if and only if $n=1$ or $n=2$ and $\lambda\equiv3 \pmod 4$. Also, if $n\geq 3$ then the solution $u$ can can exhibit arbitrarily low regularity, despite of the smoothness of $f$.
\end{example}
\begin{example}
    Since $\mu(e)=\mu(e^2)=2$, Theorem \ref{theo-wave-n+1} implies that the wave operator $p(D)=\partial_{x_1}^2-e^2\Delta_{\T^n}$ on $\T^{1}\times\T^n$ satisfies  $2\leq \operatorname{ind}_{GS}(p(D))\leq 4$. Moreover, if $\eta$ is an irrational algebraic number and $\eta^2$ is also irrational, Theorem \ref{theo-wave-n+1} implies that $2\leq \operatorname{ind}_{GH}(p(D))\leq 4$, since $\eta^2$ is also algebraic.
\end{example}

\bibliographystyle{plain} % We choose the "plain" reference style
\bibliography{references_final} % Entries are in the zbmath.bib file
%\nocite{*}

\end{document}